\documentclass[a4paper,10pt]{article}
\usepackage{amscd}
\usepackage{amsmath}
\usepackage{bbding}
\usepackage{amsfonts}
\usepackage{cite}
\usepackage{mathrsfs}
\usepackage{amssymb}
\allowdisplaybreaks[2]
\usepackage{leftidx}
\usepackage{graphicx,latexsym,amsmath,amssymb,amsthm,enumerate}
\usepackage{float}

\usepackage[dvipdfm,
           pdfstartview=FitH,
           colorlinks=true,
             pdfborder=001,
           citecolor=blue]{hyperref}

\topmargin -0cm \headheight 0in \headsep 0in \textheight
9.3in\textwidth 6.3in \hoffset -2.0cm \DeclareFontEncoding{OT2}{}{}
\DeclareTextSymbol{\cyrsftsn}{OT2}{126}
\DeclareTextSymbol{\textnumero}{OT2}{125}

\setcounter{equation}{0}

\theoremstyle{definition}
\newtheorem{theorem}{Theorem}[section]
\newtheorem{lemma}{Lemma}[section]
\newtheorem{corollary}{Corollary}[section]
\newtheorem{proposition}{Proposition}[section]
\newtheorem{definition}{Definition}[section]
\newtheorem{remark}{Remark}[section]
\newtheorem{example}{Example}[section]

\parskip 2mm

\begin{document}
\title{{\LARGE\bf{On Ekeland's variational principle for interval-valued functions with applications}}\thanks{This work was supported by the National Natural Science Foundation of China (11471230, 11671282).}}
\author{ Chuang-liang Zhang $^{1,2}$\thanks{E-mail: clzhang1992@sina.com}\, and Nan-jing Huang $^{2}$\thanks{Corresponding author, E-mail: nanjinghuang@hotmail.com}
\\
{\small\it $^1$ School of Mathematics, Jiaying University, Meizhou 514015, Guangdong,  P.R. China}\\
{\small\it $^2$ Department of Mathematics, Sichuan University, Chengdu 610064, Sichuan, P.R. China}}
\date{ }
\maketitle
\begin{flushleft}
\hrulefill\\
\end{flushleft}
 {\bf Abstract}.
In this paper, we obtain a version of Ekeland's variational principle for interval-value functions by means of the Dancs-Heged\"{u}s-Medvegyev theorem \cite{DHM}. We also derive two versions of Ekeland's variational principle involving the generalized Hukuhara G\^{a}teaux differentiability of interval-valued functions as well as a version of Ekeland's variational principle for interval-valued bifunctions. Finally, we apply these new versions of Ekeland's variational principle to fixed point theorems, to interval-valued optimization problems, to the interval-valued Mountain Pass Theorem, to noncooperative interval-valued games, and to interval-valued optimal control problems described by interval-valued differential equations.
\\ \ \\
{\bf Keywords:} Ekeland's variational principle; Palais-Smale condition; optimization problem; Mountain Pass Theorem; approximate Nash equilibrium; optimal control problem
\\ \ \\
\textbf{2020 Mathematics Subject Classification:} 58E30; 65G40; 49J53
\begin{flushleft}
\hrulefill
\end{flushleft}

\section{Introduction}
\label{sec1}
The variational principle was obtained by Ekeland \cite{Ekeland1}, now known as Ekeland's variational principle, namely:

Let $(X,d)$ be a complete metric space and $f:X\to \mathbb{R}\cup\{+\infty\}$ be a lower semicontinuous function which is lower bounded and $f\not\equiv+\infty$. Then for every $\varepsilon>0$ and every $x_0\in X$ satisfying $f(x_0)\leq \inf_{x\in X}f(x)+\varepsilon$, and every $\lambda>0$, there exists $\overline{x}\in X$ such that
\begin{itemize}
\item[(i)] $f(\overline{x})\leq f(x_0)$;
\item[(ii)] $d(x_0,\overline{x})\leq\lambda$;
\item[(iii)] $\forall x\neq\overline{x}$, $f(x)>f(\overline{x})-\frac{\varepsilon}{\lambda}d(x,\overline{x}).$
\end{itemize}
It is well known that Ekeland's variational principle becomes a powerful tool in the study of many problems arising in nonlinear analysis, dynamical system, critical points theory, economics and finance, optimization and control theory (see, for instance, \cite{AZ,AE,Chang,Ekeland2,Ekeland3,GRTZ,Mo}). Thereafter, extensive efforts have been devoted into many different versions of Ekeland's variational principle with vector-valued functions or set-valued mappings and bifunctions (see, for instance, \cite{Araya,BKP,CHY,CG,GJN,GKNR,HZ,KTZ,Qiu1,ZH}).

By using the concept of the Mordukhovich coderivative, Ha \cite{Ha} obtained some versions of Ekeland's variational principle for set-valued mappings in 2005. Ha \cite{Ha1} further established two set-valued versions of Ekeland's variational principle by employing the Clarke normal cone in 2006. Over the past years, Ha \cite{Ha2}, Khanh and Quy \cite{KQ}, Liu and Ng \cite{LN}, Qiu \cite{Qiu}, and Qiu et al. \cite{QSH} generalized different versions of Ekeland's variational principle for set-valued mappings under some suitable conditions. We note that, except for \cite{Ha,Ha1,Ha2}, all these works deal with Ekeland's variational principle for set-valued mappings without derivatives, since it is difficult to calculate derivatives for set-valued mappings. As a result, applications of set-valued versions of Ekeland's variational principle have been limited to the case without derivatives. For instance, the Mountain Pass Theorem and nonlinear Euler-Lagrange equations in set-valued cases have not been established by employing the known set-valued versions of Ekeland's variational principle. It is worth mentioning that, interval analysis, regarded as a particular subject of set-valued analysis, has been widely applied to solve many real problems arising in decision science, neural computation, artificial intelligence, optimization problems, and numerical algorithms (see, for instance, \cite{Moore1,Moore2,Wu}). We also note that the Hukuhara difference between two intervals does not necessarily exist in general. To address inefficiency of this notion, Stefanini and Bede \cite{SB} introduced the generalized Hukuhara difference (for short, $gH$-difference) and generalized Hukuhara differentiability (for short, $gH$-differentiability) to study interval-valued/fuzzy differential equations. Since then, the $gH$-difference and $gH$-differentiability have been widely used in the study of interval analysis and fuzzy set theory (see, for example, \cite{BS,CMSJ,LLSM,OCHR,OHCR,QD}). Recently,  by using the concept of $gH$-difference, Ghosh et al. \cite{GCMD} introduced the generalized Hukuhara G\^{a}teaux differentiability (for short, $gH$-G\^{a}teaux differentiability) and generalized Hukuhara Fr\'{e}chet differentiability (for short, $gH$-Fr\'{e}chet differentiability) to investigate interval-valued optimization problems.

Usually, the $gH$-G\^{a}teaux differentiability of interval-valued functions can be easily calculated and checked. The overarching goal of this paper is to establish some new versions of Ekeland's variational principle for interval-valued functions by using the $gH$-G\^{a}teaux differentiability, which have not yet been obtained in the previous literature.  According to the order relation defined on intervals, we first introduce the concepts of lower semicontinuity and lower boundedness for interval-valued functions. We then establish an interval-valued version of Ekeland's variational principle by employing the Dancs-Heged\"{u}s-Medvegyev theorem. Moreover, we deduce two interval-valued versions of Ekeland's variational principle associated with the $gH$-G\^{a}teaux differentiability as well as a new version of Ekeland's variational principle for interval-valued bifunctions. Finally, the established results are applied to obtain Carist's fixed point theorems for set-valued and interval-valued mappings, to derive some existence theorems for minimal solutions of interval-valued optimization problems, to prove the Mountain Pass Theorem for interval-valued functions, to show the existence of approximate Nash equilibria for a noncooperative interval-valued game, as well as to establish the existence of approximate minimal solutions for interval-valued optimal control problems governed by interval-valued differential equations under the generalized differentiability.

This paper is organized in the following way. The next section provides some basic notions, notations and desired results.  After that in Section 3, we show some new results concerned with Ekeland's variational principle for interval-valued functions. Before we summarize this paper in Section 5, some applications of new Ekeland's variational principles for interval-valued functions are given in Section 4.
\section{Preliminaries}\noindent
\label{sec2}
\setcounter{equation}{0}

Let $(X,d)$ be a complete metric space and $\mathcal{I}$ denote the set of all closed and bounded intervals of $\mathbb{R}$, that is, $\mathcal{I}=\{[a,b]:a\leq b,a,b\in\mathbb{R}\}$.
Given $A=[\underline{a},\overline{a}],B=[\underline{b},\overline{b}]\in \mathcal{I}$ and $\lambda\in\mathbb{R}$, the addition and scalar multiplication in $\mathcal{I}$ are given by
\begin{equation}
A+B=[\underline{a}+\underline{b},\overline{a}+\overline{b}]\nonumber\\
\end{equation}
and
\begin{equation}
\lambda A=
\begin{cases}
[\lambda \underline{a},\lambda \overline{a}] & \text{if $\lambda\geq0,$}\\
[\lambda \overline{a},\lambda \underline{a}] &\text{if $\lambda<0.$}\nonumber
\end{cases}
\end{equation}
Obviously, $A-B:=A+(-B)=[\underline{a}-\overline{b},\overline{a}-\underline{b}]$.

For two intervals $A,B\in\mathcal{I}$, the Hausdorff distance $d_H$ between $A$ and $B$ is defined by
$$
d_H(A,B)=\max\{\sup_{a\in A}d(a,B),\sup_{b\in B}d(b,A)\},
$$
where $d(a,B):=\inf_{b\in B}|a-b|$.
Equivalently, if $A=[\underline{a},\overline{a}]$ and $B=[\underline{b},\overline{b}]$, then the Hausdorff distance can be defined by
$$
d_H(A,B)=\max\{|\underline{a}-\underline{b}|,|\overline{a}-\overline{b}|\}.
$$
Clearly, $(\mathcal{I},d_H)$ is a complete metric space. It is easy to check that the following statements are true:
(i) for any $A,B,C\in \mathcal{I}$, $d_H(A+C,B+C)=d_H(A,B)$; (ii) for any $A,B\in \mathcal{I}$ and $\lambda\in\mathbb{R}$, $d_H(\lambda A,\lambda B)=|\lambda|d_H(A,B)$; (iii) for any $A,B,C,D\in \mathcal{I}$, $d_H(A+C,B+D)\leq d_H(A,B)+d_H(C,D)$.

We say that a sequence $\{A_n\}\subset\mathcal{I}$ converges to $A\in\mathcal{I}$ in the Hausdorff sense, denoted by
$$
\lim_{n\rightarrow+\infty}A_n=A\; (A_n\rightarrow A)
$$
if and only if $d_H(A_n,A)\rightarrow0$. If $A_n=[\underline{a}_n,\overline{a}_n]$ and $A=[\underline{a},\overline{a}]$ in $\mathcal{I}$, then
$d_H(A_n,A)\rightarrow 0$ if and only if $|\underline{a}_n-\underline{a}|\rightarrow0$ and $|\overline{a}_n-\overline{a}|\rightarrow0$.

For any $A,B\in \mathcal{I}$, if there exists $C\in \mathcal{I}$ such that $A=B+C$, then $C$ is called the Hukuhara difference between $A$ and
$B$. We denote it by $C=A\ominus_H B$. In general, $A\ominus_H B$ does not necessarily exist and so Stefanini and Bede \cite{BS} introduced the following concept.
\begin{definition}\label{de2.1}
Let $A=[\underline{a},\overline{a}]$ and $B=[\underline{b},\overline{b}]$ in $\mathcal{I}$. The $gH$-difference between $A$ and $B$ is defined by the interval $C\in\mathcal{I}$ such that
$$
A\ominus_{gH}B=C\Leftrightarrow C=A\ominus_H B\; \textrm{or}\; -C=B\ominus_H A.
$$
\end{definition}
\begin{remark}\label{re2.1}
It follows from \cite{BS,SB} that (i) $A\ominus_{gH}B=[\min\{\underline{a}-\underline{b},\overline{a}-\overline{b}\},\max\{\underline{a}-\underline{b},\overline{a}-\overline{b}\}]$; (ii)
$-(A\ominus_{gH}B)=B\ominus_{gH}A$; (iii) $A\ominus_{gH}A=\{0\}=[0,0]$.
\end{remark}

\begin{remark}\label{re2.2}
Obviously, $\lim_{n\rightarrow+\infty}(A_n\ominus_{gH}A)=[0,0]$ if and only if $\lim_{n\rightarrow+\infty}A_n=A$.
\end{remark}

\begin{definition}\label{de2.2}
For two intervals $A=[\underline{a},\overline{a}]$ and $B=[\underline{b},\overline{b}]\in\mathcal{I}$, we say that
\begin{itemize}
\item[(i)] $A\preccurlyeq B$ if $\underline{a}\leq \underline{b}$ and $\overline{a}\leq\overline{b}$;
\item[(ii)] $A\prec B$ if $\underline{a}< \underline{b}$ and $\overline{a}<\overline{b}$;
\item[(iii)] $A\not\preccurlyeq B$ if at leat one of $\underline{a}\leq \underline{b}$ and $\overline{a}\leq\overline{b}$ is not true;
\item[(iv)] $A\not\prec B$ if at least one of $\underline{a}< \underline{b}$ and $\overline{a}<\overline{b}$ is not true.
\end{itemize}
\end{definition}
\begin{remark}\label{re2.3}
Clearly, $\preccurlyeq$ is a partial order on $\mathcal{I}$, namely, it satisfies the following conditions: (i) for any $A\in\mathcal{I}$, $A\preccurlyeq A$; (ii) for any $A,B,C\in\mathcal{I}$, $A\preccurlyeq B$ and $B\preccurlyeq C$ imply that $A\preccurlyeq C$; (iii) for any $A,B\in\mathcal{I}$, $A\preccurlyeq B$ and $B\preccurlyeq A$ imply that $A=B$.
\end{remark}
\begin{remark}\label{re2.4}
Obviously, (i) $A\preccurlyeq B$ $\Leftrightarrow$ $A\ominus_{gH}B\preccurlyeq[0,0]$; (ii) $A\prec B$ $\Leftrightarrow$ $A\ominus_{gH}B\prec[0,0]$; (iii) $A\prec B\Rightarrow A\preccurlyeq B$; (iv) if $A\not\preccurlyeq B$, then $A\not\prec B$.
\end{remark}

We note that the converse of (iv) in Remark \ref{re2.4} is not true.  For instance, given two intervals $A=[1,3]$ and $B=[2,3]$, we have $A\not\prec B$. However, $A\not\preccurlyeq B$ is not true, that is, $A\preccurlyeq B$.

\begin{proposition}\label{p2.1}
Let $A,B,C,D\in\mathcal{I}$. Then the following statements are true:
\begin{itemize}
\item[(i)] If $A\preccurlyeq B$ and $C\preccurlyeq D$, then $A+C\preccurlyeq B+D$;
\item[(ii)] If $A\preccurlyeq B$, then $\lambda A\preccurlyeq \lambda B$ for all $\lambda\geq0$;
\item[(iii)] If $A+B\preccurlyeq C$, then $A\ominus_{gH}C\preccurlyeq-B$. The converse is true if $B$ is a singleton.
\end{itemize}
\end{proposition}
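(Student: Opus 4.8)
The plan is to reduce every statement to elementary inequalities between the endpoints of the intervals. Writing $A=[\underline{a},\overline{a}]$, $B=[\underline{b},\overline{b}]$, $C=[\underline{c},\overline{c}]$, $D=[\underline{d},\overline{d}]$, the order $\preccurlyeq$ of Definition \ref{de2.2} is phrased entirely in terms of lower and upper endpoints, while $+$, scalar multiplication, and $\ominus_{gH}$ all carry explicit endpoint formulas (the last via Remark \ref{re2.1}(i)). Thus each item becomes a routine assertion about real numbers, and the only place requiring care is (iii), whose converse hinges on the singleton hypothesis.

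For (i), from $A\preccurlyeq B$ and $C\preccurlyeq D$ I read off $\underline{a}\leq\underline{b}$, $\overline{a}\leq\overline{b}$, $\underline{c}\leq\underline{d}$, $\overline{c}\leq\overline{d}$; adding the lower pair and the upper pair and invoking $A+C=[\underline{a}+\underline{c},\overline{a}+\overline{c}]$ gives $A+C\preccurlyeq B+D$ immediately. For (ii), multiplying $\underline{a}\leq\underline{b}$ and $\overline{a}\leq\overline{b}$ by $\lambda\geq0$ preserves both inequalities, and since $\lambda A=[\lambda\underline{a},\lambda\overline{a}]$ in this range, this is exactly $\lambda A\preccurlyeq\lambda B$.

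The substance lies in (iii). Assuming $A+B\preccurlyeq C$ I have $\underline{a}+\underline{b}\leq\underline{c}$ and $\overline{a}+\overline{b}\leq\overline{c}$, equivalently $\underline{a}-\underline{c}\leq-\underline{b}$ and $\overline{a}-\overline{c}\leq-\overline{b}$. Using $-B=[-\overline{b},-\underline{b}]$ together with $A\ominus_{gH}C=[\min\{\underline{a}-\underline{c},\overline{a}-\overline{c}\},\max\{\underline{a}-\underline{c},\overline{a}-\overline{c}\}]$, I must verify $\min\{\underline{a}-\underline{c},\overline{a}-\overline{c}\}\leq-\overline{b}$ and $\max\{\underline{a}-\underline{c},\overline{a}-\overline{c}\}\leq-\underline{b}$. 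The first holds since the minimum is $\leq\overline{a}-\overline{c}\leq-\overline{b}$; the second holds because $\underline{a}-\underline{c}\leq-\underline{b}$ directly, while $\overline{a}-\overline{c}\leq-\overline{b}\leq-\underline{b}$ (using $\underline{b}\leq\overline{b}$), so both arguments of the maximum sit below $-\underline{b}$. This gives the forward implication $A\ominus_{gH}C\preccurlyeq-B$.

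For the converse, suppose $B=[\beta,\beta]$ is a singleton, so $-B=[-\beta,-\beta]$. Then $A\ominus_{gH}C\preccurlyeq-B$ forces $\max\{\underline{a}-\underline{c},\overline{a}-\overline{c}\}\leq-\beta$, hence both $\underline{a}-\underline{c}\leq-\beta$ and $\overline{a}-\overline{c}\leq-\beta$; rearranging yields $\underline{a}+\beta\leq\underline{c}$ and $\overline{a}+\beta\leq\overline{c}$, which is precisely $A+B\preccurlyeq C$. The reason the singleton assumption is essential, and the main obstacle to a general converse, is that when $\underline{b}<\overline{b}$ the relation $A\ominus_{gH}C\preccurlyeq-B$ only controls the minimum endpoint against $-\overline{b}$ and the maximum against $-\underline{b}$; this does not determine which of $\underline{a}-\underline{c}$ and $\overline{a}-\overline{c}$ realizes the maximum, so the separate inequalities $\underline{a}+\underline{b}\leq\underline{c}$ and $\overline{a}+\overline{b}\leq\overline{c}$ cannot be recovered. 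Collapsing $B$ to a point removes this ambiguity, since then the two bounds $-\overline{b}$ and $-\underline{b}$ coincide.
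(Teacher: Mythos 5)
Your proof is correct and follows essentially the same route as the paper: everything is reduced to endpoint inequalities via Remark \ref{re2.1}(i), and the converse of (iii) is handled identically. The only cosmetic difference is that for the forward direction of (iii) the paper splits into the two cases $\underline{a}-\underline{c}\leq\overline{a}-\overline{c}$ and $\underline{a}-\underline{c}>\overline{a}-\overline{c}$, whereas you bound the $\min$ and $\max$ directly; the underlying inequalities are the same.
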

\begin{proof}
From the definition of $\preccurlyeq$, we can get (i) and (ii). Thus, we only need to prove (iii). To this end, let $A=[\underline{a},\overline{a}]$, $B=[\underline{b},\overline{b}]$ and $C=[\underline{c},\overline{c}]$ be such that $A+B\preccurlyeq C$.
Then it follows that $\underline{a}-\underline{c}\leq-\underline{b}$ and $\overline{a}-\overline{c}\leq-\overline{b}$. By Remark \ref{re2.1}, we know that $A\ominus_{gH}C=[\min\{\underline{a}-\underline{c},\overline{a}-\overline{c}\},\max\{\underline{a}-\underline{c},\overline{a}-\overline{c}\}]$, which leads to the following two cases.

Case 1. If $\underline{a}-\underline{c}\leq\overline{a}-\overline{c}$, then $\underline{a}-\underline{c}\leq\overline{a}-\overline{c}\leq-\overline{b}\leq-\underline{b}$ and so
$A\ominus_{gH}C=[\underline{a}-\underline{c},\overline{a}-\overline{c}]\preccurlyeq[-\overline{b},-\underline{b}]=-B$.

Case 2. If $\underline{a}-\underline{c}>\overline{a}-\overline{c}$, then
$A\ominus_{gH}C=[\overline{a}-\overline{c},\underline{a}-\underline{c}]\preccurlyeq[-\overline{b},-\underline{b}]=-B$
and so $A\ominus_{gH}C\preccurlyeq-B$.

Conversely, let $A=[\underline{a},\overline{a}]$, $B=\{b\}=[b,b]$ and $C=[\underline{c},\overline{c}]$ be such that $A\ominus_{gH} C\preccurlyeq-B$. Then by $A\ominus_{gH}C=[\min\{\underline{a}-\underline{c},\overline{a}-\overline{c}\},\max\{\underline{a}-\underline{c},\overline{a}-\overline{c}\}]$,
we can see that $\underline{a}-\underline{c}\leq -b$ and $\overline{a}-\overline{c}\leq-b$. Thus, $\underline{a}+b\leq\underline{c}$ and $\overline{a}+b\leq\overline{c}$, and so $A+B\preccurlyeq C$.
\end{proof}

The following example shows that the converse of Proposition \ref{p2.1} (iii) is not true if $B$ is not a singleton.
\begin{example}\label{ex2.1}
Let $A=[1,3]$, $B=[-3,0]$ and $C=[1,2]$. Clearly, $A\ominus_{gH}C=[0,1]$ and $-B=[0,3]$. Then we can see that
 $A\ominus_{gH}C\preccurlyeq -B$. However, $A+B=[-2,3]\not\preccurlyeq [1,2]=C$.
\end{example}

\begin{remark}\label{re2.5}
If the partial order $\preccurlyeq$ in Proposition \ref{p2.1} is replaced by $\prec$, then
\begin{itemize}
\item[(a)] If $A\prec B$ and $C\prec D$, then $A+C\prec B+D$;
\item[(b)] If $A\prec B$, then $\lambda A\prec \lambda B$ for all $\lambda>0$;
\item[(c)] If $A+B\prec C$, then $A\ominus_{gH}C\prec-B$. The converse is true if $B$ is a singleton.
\end{itemize}
\end{remark}

\begin{proposition}\label{p2.2}
Let $A_n=[\underline{a}_n,\overline{a}_n]$, $A=[\underline{a},\overline{a}]$ and $B=[\underline{b},\overline{b}]$.  If $A_n\not\prec B$ for $n=1,2,\cdots$, and $d_H(A_n,A)\rightarrow0$, then $A\not\prec B$.
\end{proposition}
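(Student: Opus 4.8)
The plan is to argue by contradiction, which neatly sidesteps the disjunctive nature of the relation $\not\prec$. Recall from Definition \ref{de2.2}(ii) that $A\prec B$ means \emph{both} $\underline{a}<\underline{b}$ and $\overline{a}<\overline{b}$ hold, so $A\not\prec B$ is the disjunction ``$\underline{a}\geq\underline{b}$ or $\overline{a}\geq\overline{b}$''. Rather than track this disjunction across the sequence, I would first suppose for contradiction that the conclusion fails, i.e. that $A\prec B$; by Definition \ref{de2.2}(ii) this furnishes the two \emph{strict} inequalities $\underline{a}<\underline{b}$ and $\overline{a}<\overline{b}$ simultaneously.

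Next I would exploit this strictness to manufacture a uniform margin. Setting $\varepsilon:=\min\{\underline{b}-\underline{a},\overline{b}-\overline{a}\}$, the two strict inequalities guarantee $\varepsilon>0$. From $d_H(A_n,A)\to0$ together with the coordinatewise characterization of Hausdorff convergence recorded in Section \ref{sec2} (namely $|\underline{a}_n-\underline{a}|\to0$ and $|\overline{a}_n-\overline{a}|\to0$), I may choose $N$ so that $|\underline{a}_n-\underline{a}|<\varepsilon$ and $|\overline{a}_n-\overline{a}|<\varepsilon$ for all $n\geq N$. For every such $n$ one then reads off $\underline{a}_n<\underline{a}+\varepsilon\leq\underline{b}$ and $\overline{a}_n<\overline{a}+\varepsilon\leq\overline{b}$, whence $A_n\prec B$. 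This contradicts the hypothesis that $A_n\not\prec B$ for all $n$, and the proof is complete.

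I expect no genuine obstacle in this argument; the only point demanding care is the logical unwinding of $\not\prec$, since it is a disjunction rather than a conjunction. A direct, non-contradiction proof would be forced to confront this disjunction through a pigeonhole step: among the infinitely many indices, at least one of the alternatives $\underline{a}_n\geq\underline{b}$, $\overline{a}_n\geq\overline{b}$ must hold for infinitely many $n$, and passing to the limit along that subsequence would yield $\underline{a}\geq\underline{b}$ or $\overline{a}\geq\overline{b}$. The contradiction route is preferable precisely because it converts the awkward disjunction appearing in the hypothesis into a convenient conjunction ($A\prec B$) in the assumed negation of the conclusion, so that the single margin $\varepsilon$ controls both endpoints at once.
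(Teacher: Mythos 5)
Your proposal is correct and follows essentially the same route as the paper: assume for contradiction that $A\prec B$, use the coordinatewise characterization of Hausdorff convergence to force $\underline{a}_n<\underline{b}$ and $\overline{a}_n<\overline{b}$ for all large $n$, and conclude $A_n\prec B$, a contradiction. The only cosmetic difference is that you package the two strict gaps into a single margin $\varepsilon$ whereas the paper picks two thresholds $N_1,N_2$ separately; the argument is the same.
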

\begin{proof}
Suppose to the contrary that $A\prec B$. Then $\underline{a}<\underline{b}$ and $\overline{a}<\overline{b}$. By $d_H(A_n,A)\rightarrow0$, one has
$|\underline{a}_n-\underline{a}|\rightarrow0$ and $|\overline{a}_n-\overline{a}|\rightarrow0$. Thus, there exists $N_1>0$ such that, for each $n>N_1$, $\underline{a}_n<\underline{b}$, and exists $N_2>0$ such that, for each $n>N_2$, $\overline{a}_n<\overline{b}$. Taking $N=\max\{N_1,N_2\}$, for each $n>N$, we have $\underline{a}_n<\underline{b}$ and $\overline{a}_n<\overline{b}$, and so $A_n\prec B$, which is a contradiction.
\end{proof}

We give an example to illustrate Proposition \ref{p2.2}.
\begin{example}\label{ex2.2}
Let $A_n=[\frac{1}{n},1]$ for $n=1,2,\cdots$, $A=[0,1]$ and $B=[0,2]$. Then we can see that $A_n\not\prec B$. Moreover, we have $\lim_{n\rightarrow+\infty} A_n=[0,1]=A\not\prec B$.
\end{example}
The following example shows that Proposition \ref{p2.2} is not true if $\not\prec$ is replaced by $\not\preccurlyeq$.
\begin{example}\label{ex2.3}
Let $A_n=[\frac{1}{n},2]$ for $n=1,2,\cdots$, $A=[0,2]$ and $B=[0,3]$. Then we can see that $A_n\not\preccurlyeq B$. On the other hand, we have $\lim_{n\rightarrow+\infty} A_n=[0,2]=A$. However, it is easy to see that $A=[0,2]\preccurlyeq [0,3]=B$.
\end{example}

Now we turn to consider interval-valued functions.  Let $f:X\to\mathcal{I}$ be an interval-valued function with the form $f(x)=[\underline{f}(x),\overline{f}(x)]$  for all $x\in X$, where $\underline{f}$ and $\overline{f}$ are both real-valued functions from $X$ to $\mathbb{R}$, satisfying $\underline{f}(x)\leq\overline{f}(x)$.
\begin{definition}\label{de2.3}(\cite{GCMD})
Let $X_0$ be a nonempty subset of $X$ and $f:X_0\to\mathcal{I}$ be an interval-valued function. We say that $f$ is $\preccurlyeq$-lower bounded on $X_0$ if there exists an interval $A\in\mathcal{I}$ such that, for any $x\in X_0$, $A\preccurlyeq f(x)$. Similarly, we can define $\preccurlyeq$-upper boundedness of $f$.  We call that $f$ is $\preccurlyeq$-bounded on $X_0$ if $f$ is both $\preccurlyeq$-lower and $\preccurlyeq$-upper bounded on $X_0$.
\end{definition}

\begin{remark}\label{re2.6}
Clearly, if $f:X_0\to \mathcal{I}$ is an interval-valued function, then $f$ is $\preccurlyeq$-lower (resp.,-upper) bounded if and only if $\underline{f}$ and $\overline{f}$ are both lower (resp., upper) bounded.
\end{remark}
Let $f: X_0\to \mathcal{I}$ be an interval-valued function. From \cite{WW1,WW2}, an interval $M\in\mathcal{I}$ is called the infimum of $f$ if  $M\preccurlyeq f(x)$ for all $x\in X_0$ and for each $M_1\in\mathcal{I}$, $M_1\preccurlyeq M$ whenever $M_1\preccurlyeq f(x)$ for all $x\in X_0$. We write $M=\textrm{Inf}_{x\in X_0} f(x)$. We note that $\textrm{Inf}_{x\in X_0} f(x)=[\inf_{x\in X_0} \underline{f}(x),\inf_{x\in X_0} \overline{f}(x)]$.  Also, if the infimum $\textrm{Inf}_{x\in X_0} f(x)$ exists (i.e., $\inf_{x\in X_0} \overline{f}(x)\geq\inf_{x\in X_0}\underline{f}(x)>-\infty$), then for any given $\varepsilon>0$, there exists a point $x'\in X_0$ such that $f(x')\preccurlyeq \textrm{Inf}_{x\in X_0} f(x)+[\varepsilon,\varepsilon]$. Similarly, we can define the supremum of $f$ and we denote by $\textrm{Sup} _{x\in X_0} f(x)$. Moreover, we know that infimum $\textrm{Inf}_{x\in X_0} f(x)$ (resp., supremum $\textrm{Sup} _{x\in X_0} f(x)$) exists if $f$ has $\preccurlyeq$-lower (resp., $\preccurlyeq$-upper ) bounded on $X_0$.

Now we recall the semicontinuity of functions as follows:
\begin{definition}\label{de2.4}
A function $f:X\to \mathbb{R}$ is said to be lower semicontinuous at $x_0\in X$ if,  for any sequence $\{x_n\}$ in $X$ converges to $x_0$, one has
$f(x_0)\leq\liminf_{n\rightarrow+\infty} f(x_n).$
A function $f:X\to \mathbb{R}$ is said to be lower semicontinuous on $X$ if $f$ is lower semicontinuous at each $x_0\in X$.
\end{definition}

It is well known that a function $f:X\to\mathbb{R}$ is lower semicontinuous on $X$ if and only if, for any $r\in\mathbb{R}$, the set $\{x\in X: f(x)\leq r\}$ is closed.

This fact motives us to introduce the semicontinuity of interval-valued functions.
\begin{definition}\label{de2.5}
An interval-valued function $f:X\to\mathcal{I}$ is said to be $\preccurlyeq$-lower semicontinuous on $X$ if, for any interval $A\in\mathcal{I}$, the set $\{x\in X:f(x)\preccurlyeq A\}$ is closed.
\end{definition}

\begin{proposition}\label{p2.3}
An interval-valued function $f:X\to\mathcal{I}$ is $\preccurlyeq$-lower semicontinuous on $X$ if and only if $\underline{f}$ and $\overline{f}$ are both lower semicontinuous on $X$.
\end{proposition}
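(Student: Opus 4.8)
The plan is to reduce the interval-valued statement to the classical real-valued characterisation recorded just before Definition \ref{de2.5}, namely that a function $g:X\to\mathbb{R}$ is lower semicontinuous if and only if every sublevel set $\{x\in X:g(x)\leq r\}$ is closed. The bridge between the two settings is the elementary identity
\[
\{x\in X:f(x)\preccurlyeq[\underline a,\overline a]\}=\{x\in X:\underline f(x)\leq\underline a\}\cap\{x\in X:\overline f(x)\leq\overline a\},
\]
valid for every $[\underline a,\overline a]\in\mathcal{I}$, which is immediate from Definition \ref{de2.2}(i). Thus the proof amounts to matching the $\preccurlyeq$-sublevel sets of $f$ against the ordinary sublevel sets of $\underline f$ and $\overline f$. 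For the ``if'' direction I would argue directly: if $\underline f$ and $\overline f$ are lower semicontinuous, then both sets on the right-hand side of the identity are closed, hence so is their intersection, and $f$ is $\preccurlyeq$-lower semicontinuous by Definition \ref{de2.5}. This direction is routine.

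The substance is the ``only if'' direction. The sublevel sets of $\overline f$ are easy to recover: taking the singleton $A=[r,r]$ and using $\underline f(x)\leq\overline f(x)$, the identity collapses to $\{x:\overline f(x)\leq r\}=\{x:f(x)\preccurlyeq[r,r]\}$, which is closed by hypothesis, so $\overline f$ is lower semicontinuous. Recovering the lower semicontinuity of $\underline f$ is where I expect the genuine difficulty, since the inequality $\underline f(x)\leq r$ places no constraint on $\overline f(x)$, whereas every admissible test interval $[\underline a,\overline a]\in\mathcal{I}$ is bounded and so always imposes a constraint on $\overline f$ as well. Writing $\{x:\underline f(x)\leq r\}=\bigcup_{s\geq r}\{x:f(x)\preccurlyeq[r,s]\}$ exhibits this set as an increasing union of closed sets, which need not be closed; in the equivalent sequential formulation one takes $x_n\to x_0$ with $\underline f(x_n)\leq r$ and must enclose all the pairs $f(x_n)$ in a single $\preccurlyeq$-sublevel set, which is impossible precisely when $\overline f(x_n)\to+\infty$.

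The way I would close this gap is to render the $\overline f$-constraint inactive by choosing $\overline a$ large. If $\overline f$ is bounded above on $X$ by some $M$ (which holds, for instance, whenever $f$ is $\preccurlyeq$-upper bounded in the sense of Definition \ref{de2.3}), then for any $\overline a\geq\max\{r,M\}$ the factor $\{x:\overline f(x)\leq\overline a\}$ equals all of $X$, and the identity yields $\{x:\underline f(x)\leq r\}=\{x:f(x)\preccurlyeq[r,\overline a]\}$, which is closed; hence $\underline f$ is lower semicontinuous and the equivalence follows. I would therefore single out the control of $\overline f$ along sequences, equivalently the case $\overline f(x_n)\to+\infty$, as the one point of the argument deserving real care, and I would verify explicitly that the intended scope of the proposition supplies such a control, since without it the passage from $\preccurlyeq$-lower semicontinuity of $f$ to lower semicontinuity of $\underline f$ is not automatic.
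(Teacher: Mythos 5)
Your handling of the ``if'' direction and of the lower semicontinuity of $\overline f$ is correct and, up to the choice of sublevel-set versus sequential language, coincides with the paper's argument. The point you isolate as the one deserving real care --- recovering lower semicontinuity of $\underline f$ from the closedness of the $\preccurlyeq$-sublevel sets --- is exactly where the paper's own proof goes wrong. Given $y_n\to y_0$ with $\underline f(y_n)\leq s$, the paper asserts that, because $\overline f$ is lower semicontinuous and $\{y_n\}\cup\{y_0\}$ is compact, there exist $s_1\in\mathbb{R}$ and a subsequence $\{y_{n_k}\}$ with $\overline f(y_{n_k})\leq s_1$, and then encloses that subsequence in the single sublevel set $\{x:f(x)\preccurlyeq[s,\eta]\}$ with $\eta\geq\max\{s,s_1\}$. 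But lower semicontinuity on a compact set gives boundedness from \emph{below}, not from above, so no such subsequence need exist; the missing ingredient is precisely the control ``$\overline f(y_n)\not\to+\infty$'' that you identified.

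In fact the implication fails without such a control, so your caution is warranted and your conditional argument (under an upper bound on $\overline f$) is essentially the most that can be salvaged. Take $X=\{0\}\cup\{1/n:n\geq1\}$ with the metric inherited from $\mathbb{R}$ (a complete metric space), and set $f(1/n)=[0,n]$ and $f(0)=[1,1]$. For any $A=[\underline a,\overline a]\in\mathcal{I}$ the set $\{x\in X:f(x)\preccurlyeq A\}$ is contained in $\{0\}\cup\{1/n:n\leq\overline a\}$, hence is finite and therefore closed, so $f$ is $\preccurlyeq$-lower semicontinuous in the sense of Definition \ref{de2.5}; moreover $\overline f$ is lower semicontinuous on $X$. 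Yet $\underline f(1/n)=0$ for all $n$ while $\underline f(0)=1$, so $\underline f$ is not lower semicontinuous at $0$. Thus the ``only if'' half of the proposition is false as stated; it becomes true under the additional hypothesis you propose (for instance $\overline f$ bounded above, or bounded above along convergent sequences), and in that restricted setting your proof is complete and correct.
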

\begin{proof}
($\Leftarrow$) Let $\underline{f}$ and $\overline{f}$ be lower semicontinuous on $X$. For any $A=[\underline{a},\overline{a}]\in\mathcal{I}$, taking any sequence $\{x_n\}$ in $\{x\in X:f(x)\preccurlyeq A\}$ such that $d(x_n, x_0)\rightarrow0$, we have $\underline{f}(x_n)\leq \underline{a}$ and $\overline{f}(x_n)\leq\overline{a}$. Because $\underline{f}$ and $\overline{f}$ are lower semicontinuous on $X$, we get $\underline{f}(x_0)\leq\underline{a}$ and $\overline{f}(x_0)\leq\overline{a}$, which imply that $f(x_0)\preccurlyeq A$, and so $f$ is $\preccurlyeq$-lower semicontinuous on $X$.

($\Rightarrow$) Let $f$ be $\preccurlyeq$-lower semicontinuous on $X$. For any $r\in \mathbb{R}$, we take any sequence $\{x_n\}$ in $\{x\in X:\overline{f}(x)\leq r\}$ such that $d(x_n, x_0)\rightarrow0$. Then we get $[\underline{f}(x_n),\overline{f}(x_n)]\preccurlyeq [r,r]$. By the $\preccurlyeq$-lower semicontinuity of $f$, we know that $\overline{f}(x_0)\leq r$, which says that $\overline{f}$ is lower semicontinuous on $X$.
Next, let $s\in\mathbb{R}$ and $\{y_n\}$ be a sequence in $\{x\in X:\underline{f}(x)\leq s\}$ such that $d(y_n,y_0)\rightarrow 0$. Because $\overline{f}$ is lower semicontinuous and $\{y_n\}\cup\{y_0\}$ is compact, there exists $s_1\in\mathbb{R}$ and a subsequence $\{y_{n_k}\}$ of $\{y_n\}$ satisfying $\overline{f}(y_{n_k})\leq s_1$. Thus, we can choose $\eta\geq\max\{s,s_1\}$ and so $[\underline{f}(y_{n_k}),\overline{f}(y_{n_k})]\preccurlyeq [s,\eta]$. Thanks to the $\preccurlyeq$-lower semicontinuity of $f$,
we have $\underline{f}(y_0)\leq s$. Consequently, $\underline{f}$ is lower semicontinuous on $X$.
\end{proof}

\begin{remark}\label{re2.7} Proposition \ref{p2.3} shows that $f+g$ is $\preccurlyeq$-lower semicontinuous on $X$ providing $f,g: X\to \mathcal{I}$ are two $\preccurlyeq$-lower semicontinuous interval-valued functions.
\end{remark}

We also recall the concept of continuity for interval-valued functions as follows.
\begin{definition}(\cite{Wu})\label{de2.6}
An interval-valued function $f:X\to \mathcal{I}$ is said to be Hausdorff continuous at $x_0\in X$ if $\lim_{x\rightarrow x_0}f(x)=f(x_0)$. An interval-valued function $f:X\to \mathcal{I}$ is said to be Hausdorff continuous on $X$ if $f$ is Hausdorff continuous at every point in $X$.
\end{definition}
\begin{remark}\label{re2.8}
Proposition 3.3 in \cite{Wu} shows that $f$ is Hausdorff continuous if and only if $\underline{f}$ and $\overline{f}$ are continuous.
Moreover, it follows from Proposition \ref{p2.3} that the Hausdorff continuity implies the $\preccurlyeq$-lower semicontinuity.
\end{remark}

\begin{remark}\label{re2.9}
From Remark \ref{re2.2}, we can see that Definition \ref{de2.6} corresponds to the $gH$-continuity in the sense of Definition 2.7 in \cite{GCMD}.
\end{remark}

Next we consider the following interval-valued optimization problem:
$$
\textrm{(IOP)}\quad \textrm{Minimize}\; f(x)\quad \textrm{subject to} \;\; x\in X_0,
$$
where $X_0$ is a nonempty set and $f: X_0\to \mathcal{I}$ is an interval-valued function with $f(x)=[\underline{f}(x),\overline{f}(x)]$.

We recall the following concept of minimal solution for (IOP).
\begin{definition}(\cite{OCHR})\label{de2.7}
A point $x_0\in X_0$ is said to be a minimal solution of (IOP) if  there does not exist $x\in X_0$ such that
$f(x)\prec f(x_0)$, that is, for any $x\in X_0$, $f(x)\not\prec f(x_0)$.
\end{definition}

The set of all minimal solutions of (IOP) is denoted by $\textrm{Min}(f, X_0)$.

\begin{remark}\label{re2.10}
If at least one of $\underline{f}(x_0)=\inf_{x\in X_0}\underline{f}(x)$ and $\overline{f}(x_0)=\inf_{x\in X_0}\overline{f}(x)$ holds, then  $x_0\in \textrm{Min}(f, X_0)$. In fact, if $x_0\not\in \textrm{Min}(f, X_0)$, then Definition \ref{de2.7} shows that there exists $x'\in X_0$ such that $f(x')\prec f(x_0)$. Thus,  $\underline{f}(x')<\underline{f}(x_0)$ and $\overline{f}(x')<\overline{f}(x_0)$, which is a contradiction.
\end{remark}
The following example shows that the converse of Remark \ref{re2.10} is not true.
\begin{example}\label{ex2.5}
Let $X_0=\mathbb{R}$ and $f:X_0\to \mathcal{I}$ be an interval-valued function defined by
\begin{equation}
f(x)=
\begin{cases}
[1,2] & \text{if $x>0,$}\\
[0,3] & \text{if $x=0,$}\\
[-1,4] &\text{if $x<0.$}\nonumber
\end{cases}
\end{equation}
Clearly, $x=0$ is a minimal solution of (IOP) and
\begin{equation}
\begin{aligned}
\underline{f}(x)=
\begin{cases}
1 & \text{if $x>0,$}\\
0 & \text{if $x=0,$}\\
-1 &\text{if $x<0,$}\nonumber
\end{cases}
\end{aligned}
\quad
\begin{aligned}
\overline{f}(x)=
\begin{cases}
2 & \text{if $x>0,$}\\
3 & \text{if $x=0,$}\\
4 &\text{if $x<0.$}\nonumber
\end{cases}
\end{aligned}
\end{equation}
However, we can see that $0\not\in \textrm{argmin}_{x\in X_0}\underline{f}(x)\cup \textrm{argmin}_{x\in X_0}\overline{f}(x)$.
\end{example}

\begin{remark}\label{re2.11}
\begin{itemize}
\item [(i)] If there exists $x_0\in X_0$ such that $f(x_0)=\textrm{Inf}_{x\in X_0}f(x)$, then Remark \ref{re2.10} shows that $x_0\in \textrm{Min}(f, X_0)$ .
\item[(ii)] Clearly, if $X_0$ is compact and $f$ is $\preccurlyeq$-lower semicontinuous on $X_0$, then $\textrm{Min}(f,X_0)\neq\emptyset$.
\end{itemize}
\end{remark}

We close this section by recalling the following lemma which will be used in next section.
\begin{lemma}\label{le2.1}(\cite{DHM})
Let $\Gamma:X\rightrightarrows X$ be a set-valued mapping. Suppose that the following conditions are satisfied:
\begin{itemize}
\item[(i)] for each $x\in X$, one has $x\in \Gamma(x)$, and $\Gamma(x)$ is closed;
\item[(ii)] for each $y\in \Gamma(x)$, $\Gamma(y)\subseteq \Gamma(x)$;
\item[(iii)] for any sequence $\{x_n\}$ in $X$ such that $x_{n+1}\in \Gamma(x_n)$ for all $n=1,2,\cdots$, one has $d(x_n,x_{n+1})\rightarrow0$.
\end{itemize}
Then there exists $\overline{x}\in X$ such that $\Gamma(\overline{x})=\{\overline{x}\}$.
\end{lemma}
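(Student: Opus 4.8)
The plan is to prove this by the classical orbit-construction argument underlying the Dancs--Heged\"{u}s--Medvegyev theorem: I would build a single iterated orbit of $\Gamma$ that conditions (i)--(iii) force to be Cauchy, pass to its limit using completeness of $(X,d)$, and then verify that the limit is the desired stationary point. First I would fix any $x_1\in X$ and construct a sequence $\{x_n\}$ inductively with $x_{n+1}\in\Gamma(x_n)$ by a \emph{greedy} selection rule. Writing $\delta_n:=\sup_{y\in\Gamma(x_n)}d(x_n,y)$, I would choose $x_{n+1}\in\Gamma(x_n)$ so that $d(x_n,x_{n+1})\geq\delta_n-\frac1n$. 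This is legitimate because $x_n\in\Gamma(x_n)$ by (i), so $\Gamma(x_n)\neq\emptyset$ and $\delta_n$ is the supremum of a nonempty set of nonnegative reals; the defining property of the supremum supplies such an $x_{n+1}$.

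The whole point of the greedy rule is that it ties the single step size $d(x_n,x_{n+1})$ to the entire ``radius'' $\delta_n$. By condition (iii), the orbit just constructed satisfies $d(x_n,x_{n+1})\to0$; combined with the greedy estimate $\delta_n\leq d(x_n,x_{n+1})+\frac1n$ this yields $\delta_n\to0$. Next, condition (ii) gives the nested inclusions $\Gamma(x_{n+1})\subseteq\Gamma(x_n)$, so upon iterating one gets $x_p\in\Gamma(x_n)$ whenever $p\geq n$, whence $d(x_n,x_p)\leq\delta_n$. Since $\delta_n\to0$, the tail diameters shrink and $\{x_n\}$ is Cauchy; by completeness of $(X,d)$ it converges to some $\overline{x}\in X$.

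Finally I would identify $\overline{x}$ as the stationary point. Fixing $n$ and letting $p\to\infty$ in $x_p\in\Gamma(x_n)$, the closedness of $\Gamma(x_n)$ from (i) gives $\overline{x}\in\Gamma(x_n)$ for every $n$, and then (ii) yields $\Gamma(\overline{x})\subseteq\Gamma(x_n)$ for every $n$. Now $\overline{x}\in\Gamma(\overline{x})$ by (i), so one inclusion is immediate; conversely, for any $z\in\Gamma(\overline{x})$ we have $z\in\Gamma(x_n)$ and hence $d(x_n,z)\leq\delta_n\to0$, which forces $z=\lim_n x_n=\overline{x}$ by uniqueness of limits. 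Therefore $\Gamma(\overline{x})=\{\overline{x}\}$, as claimed.

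The step needing the most care is the soundness of the greedy construction when $\delta_n=+\infty$, since then no $x_{n+1}$ with $d(x_n,x_{n+1})\geq\delta_n-\frac1n$ exists. I would handle this by instead selecting $d(x_n,x_{n+1})\geq n$ in that case; condition (iii), applied to the resulting orbit, then forces $\delta_n<+\infty$ for all but finitely many $n$, so the estimate $\delta_n\leq d(x_n,x_{n+1})+\frac1n$ and the conclusion $\delta_n\to0$ remain valid. Beyond this bookkeeping the argument is routine, and the conceptual crux is simply recognizing that the greedy selection is exactly what upgrades the stepwise smallness guaranteed by (iii) into genuine Cauchyness of the orbit.
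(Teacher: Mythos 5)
Your proof is correct. The paper states this lemma only with a citation to the Dancs--Heged\"{u}s--Medvegyev paper and gives no proof of its own, and your greedy-orbit argument (choosing $x_{n+1}\in\Gamma(x_n)$ with $d(x_n,x_{n+1})\geq\delta_n-\tfrac1n$ so that condition (iii) upgrades to $\delta_n\to0$, then using the nesting from (ii) and closedness from (i) to pass to the limit) is precisely the standard proof from that reference, including the correct disposal of the case $\delta_n=+\infty$.
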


\section{Ekeland's variational principle for interval-valued functions}
\label{sec3}
In this section, we establish some new versions of Ekeland's variational principle for interval-valued functions.

\begin{theorem}\label{th3.1}
Let $(X,d)$ be a complete metric space and $f:X\to \mathcal{I}$ be an interval-valued function with $f(x)=[\underline{f}(x),\overline{f}(x)]$. Suppose that
\begin{itemize}
\item[(i)] $f$ is $\preccurlyeq$-lower semicontinuous;
\item[(ii)] $f$ is $\preccurlyeq$-lower bounded.
\end{itemize}
Then for every $\varepsilon>0$ and every $x_0\in X$ satisfying $\underline{f}(x_0)\leq\inf_{x\in X}\underline{f}(x)+\varepsilon$ and $\overline{f}(x_0)\leq\inf_{x\in X}\overline{f}(x)+\varepsilon$,
there exists $\overline{x}\in X$ such that
\begin{itemize}
\item[(a)] $f(\overline{x})\preccurlyeq f(x_0)$;
\item[(b)] $d(x_0,\overline{x})\leq1$;
\item[(c)] $\forall x\neq \overline{x}$,\quad $f(x)+[\varepsilon d(x,\overline{x}),\varepsilon d(x,\overline{x})]\not\preccurlyeq f(\overline{x})$.
\end{itemize}
\end{theorem}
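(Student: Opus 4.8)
The plan is to apply the Dancs--Heged\"{u}s--Medvegyev theorem (Lemma \ref{le2.1}) to a dynamical system built from $f$. Concretely, I would define the set-valued map $\Gamma:X\rightrightarrows X$ by
$$
\Gamma(x)=\{y\in X: f(y)+[\varepsilon d(x,y),\varepsilon d(x,y)]\preccurlyeq f(x)\},
$$
so that, unravelling $\preccurlyeq$, membership $y\in\Gamma(x)$ is equivalent to the two scalar inequalities $\underline{f}(y)+\varepsilon d(x,y)\le\underline{f}(x)$ and $\overline{f}(y)+\varepsilon d(x,y)\le\overline{f}(x)$. A fixed point $\overline{x}$ with $\Gamma(\overline{x})=\{\overline{x}\}$ produced by Lemma \ref{le2.1} is then exactly a point for which no $x\neq\overline{x}$ satisfies $f(x)+[\varepsilon d(x,\overline{x}),\varepsilon d(x,\overline{x})]\preccurlyeq f(\overline{x})$, which is precisely conclusion (c).

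To run Lemma \ref{le2.1} I must verify its three hypotheses. Reflexivity $x\in\Gamma(x)$ is immediate from $d(x,x)=0$. Closedness of $\Gamma(x)$ follows because $y\mapsto f(y)+[\varepsilon d(x,y),\varepsilon d(x,y)]$ is $\preccurlyeq$-lower semicontinuous: the distance term is Hausdorff continuous in $y$ (both endpoints equal the continuous function $\varepsilon d(x,y)$), hence $\preccurlyeq$-lower semicontinuous by Remark \ref{re2.8}, and adding the $\preccurlyeq$-lower semicontinuous $f$ preserves this by Remark \ref{re2.7}; thus the sublevel set $\Gamma(x)$ is closed by Definition \ref{de2.5}. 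For the transitivity hypothesis, given $y\in\Gamma(x)$ and $z\in\Gamma(y)$, I would add $[\varepsilon d(x,y),\varepsilon d(x,y)]$ to both sides of $f(z)+[\varepsilon d(y,z),\varepsilon d(y,z)]\preccurlyeq f(y)$ via Proposition \ref{p2.1}(i), combine with $y\in\Gamma(x)$ by transitivity of $\preccurlyeq$, and then invoke the triangle inequality $d(x,z)\le d(x,y)+d(y,z)$ together with Proposition \ref{p2.1}(i) to deduce $z\in\Gamma(x)$, so that $\Gamma(y)\subseteq\Gamma(x)$. Finally, for any chain $x_{n+1}\in\Gamma(x_n)$ the scalar inequalities force $\{\underline{f}(x_n)\}$ to be nonincreasing, and $\preccurlyeq$-lower boundedness (Remark \ref{re2.6}) makes it convergent; since $\varepsilon d(x_n,x_{n+1})\le\underline{f}(x_n)-\underline{f}(x_{n+1})\to0$, we get $d(x_n,x_{n+1})\to0$, which is the third hypothesis.

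The point needing care, and what I expect to be the main obstacle, is that Lemma \ref{le2.1} by itself yields a fixed point somewhere in $X$ with no control of its distance to $x_0$, whereas (a) and (b) demand $\overline{x}\in\Gamma(x_0)$. I would resolve this by applying Lemma \ref{le2.1} not on $X$ but on the closed (hence complete) subset $X_0:=\Gamma(x_0)$: by the transitivity hypothesis just verified, $x\in X_0$ implies $\Gamma(x)\subseteq\Gamma(x_0)=X_0$, so $\Gamma$ restricts to a dynamical system on $X_0$ satisfying all three hypotheses, and the resulting $\overline{x}$ lies in $\Gamma(x_0)$. Since $\Gamma(\overline{x})$ is contained in $X_0$, it is the same set whether computed in $X_0$ or in $X$, so (c) holds for every $x\in X$. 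Membership $\overline{x}\in\Gamma(x_0)$ gives $f(\overline{x})+[\varepsilon d(x_0,\overline{x}),\varepsilon d(x_0,\overline{x})]\preccurlyeq f(x_0)$; dropping the nonnegative distance term yields (a), while reading off the lower endpoint gives $\varepsilon d(x_0,\overline{x})\le\underline{f}(x_0)-\underline{f}(\overline{x})$. The hypotheses $\underline{f}(x_0)\le\inf_{x\in X}\underline{f}(x)+\varepsilon$ and $\underline{f}(\overline{x})\ge\inf_{x\in X}\underline{f}(x)$ then bound the right-hand side by $\varepsilon$, so $d(x_0,\overline{x})\le1$, which is (b).
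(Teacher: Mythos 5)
Your proposal is correct and follows essentially the same route as the paper: both restrict the dynamical system $\Gamma$ to the closed set $\Gamma(x_0)$ (the paper's $S_0$), apply the Dancs--Heged\"{u}s--Medvegyev lemma there, and read off (a) and (b) from $\overline{x}\in\Gamma(x_0)$ together with the approximate-minimizer hypothesis on $x_0$. Your observation that $\Gamma(\overline{x})\subseteq\Gamma(x_0)$ makes the fixed-point property valid in all of $X$ is exactly the content of the paper's contradiction argument for points outside $S_0$, just phrased more cleanly.
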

\begin{proof}
For every $\varepsilon>0$ and every $x_0\in X$, we define
$$S_0:=\{y\in X: f(y)+[\varepsilon d(x_0,y),\varepsilon d(x_0,y)]\preccurlyeq f(x_0) \}.$$
Thanks to the fact that $\underline{f}(x_0)+\varepsilon d(x_0,x_0)\leq \underline{f}(x_0)$ and $\overline{f}(x_0)+\varepsilon d(x_0,x_0)\leq \overline{f}(x_0)$, we must have $x_0\in S_0$. By the lower semicontinuity of $d(x_0,\cdot)$, it follows from  Proposition \ref{p2.3} and Remark \ref{re2.7} that $f(\cdot)+[\varepsilon d(x_0,\cdot),\varepsilon d(x_0,\cdot)]$ is $\preccurlyeq$-lower semicontinuous and so $S_0$ is closed. This shows that $(S_0,d)$ is a complete metric space.

If $S_0=\{x_0\}$, then we take $\overline{x}:=x_0$, which says that the proof is finished.

Otherwise, we suppose that $S_0\neq\{x_0\}$. Then we can define a set-valued mapping $S: S_0\rightrightarrows S_0$ by
$$
S(x)=\{y\in S_0: f(y)+[\varepsilon d(x,y),\varepsilon d(x,y)]\preccurlyeq f(x)\},\quad\forall x\in S_0.
$$
Clearly, for any $x\in S_0$, we know that $x\in S(x)$ and $S(x)$ is closed due to the closedness of $S_0$.
Now we claim that, for any $x\in S_0$ and $y\in S(x)$, $S(y)\subseteq S(x)$. In fact, for any $y\in S(x)$ and $z\in S(y)$, one has
$f(y)+[\varepsilon d(x,y),\varepsilon d(x,y)]\preccurlyeq f(x)$ and $f(z)+[\varepsilon d(y,z),\varepsilon d(y,z)]\preccurlyeq f(y)$. In view of Remark \ref{re2.3} and Proposition \ref{p2.1}, we can see that $f(z)+[\varepsilon d(x,y)+\varepsilon d(y,z),\varepsilon d(x,y)+\varepsilon d(y,z)] \preccurlyeq f(x)$ and so $$\underline{f}(z)+\varepsilon d(x,z)\leq\underline{f}(z)+\varepsilon d(x,y)+\varepsilon d(y,z)\leq \underline{f}(x), \quad \overline{f}(z)+\varepsilon d(x,z)\leq \overline{f}(z)+\varepsilon d(x,y)+\varepsilon d(y,z) \leq \overline{f}(x).$$
Hence, of course, $f(z)+[\varepsilon d(x,z),\varepsilon d(x,z)]\preccurlyeq f(x)$, which implies that $z\in S(x)$. In other words, $S(y)\subseteq S(x)$ for all $x\in S_0$ and $y\in S(x)$. Next we can take a sequence $\{x_n\}\subseteq S_0$ such that $x_{n+1}\in S(x_n)$ for $n=1,2,\cdots$. Then we have $f(x_{n+1})+[\varepsilon d(x_n,x_{n+1}),\varepsilon d(x_n,x_{n+1})]\preccurlyeq f(x_n)$ and so
$$
d(x_n,x_{n+1})\leq \frac{1}{\varepsilon}(\underline{f}(x_n)-\underline{f}(x_{n+1})),\quad
d(x_n,x_{n+1})\leq \frac{1}{\varepsilon}(\overline{f}(x_n)-\overline{f}(x_{n+1})).
$$
Thus, $\underline{f}(x_{n+1})\leq\underline{f}(x_n)$ and $\overline{f}(x_{n+1})\leq\overline{f}(x_n)$. Moreover, by Remark \ref{re2.6}, we know that $\{\underline{f}(x_n)\}$ and $\{\overline{f}(x_n)\}$ are decreasing and lower bounded sequences and so  $d(x_n,x_{n+1})\rightarrow0$.
The hypotheses of Lemma \ref{le2.1} are then satisfied, so there exists $\overline{x}\in S_0$ satisfying $S(\overline{x})=\{\overline{x}\}$. Thus, $f(\overline{x})+[\varepsilon d(x_0,\overline{x}),\varepsilon d(x_0,\overline{x})]\preccurlyeq f(x_0)$ and $f(x)+[\varepsilon d(x,\overline{x}),\varepsilon d(x,\overline{x})]\not\preccurlyeq f(\overline{x})$ for all $x\in S_0\backslash\{\overline{x}\}$. Since $f(\overline{x})+[\varepsilon d(x_0,\overline{x}),\varepsilon d(x_0,\overline{x})]\preccurlyeq f(x_0)$, we get
$\underline{f}(\overline{x})\leq\underline{f}(\overline{x})+\varepsilon d(x_0,\overline{x})\leq\underline{ f}(x_0)$ and $\overline{f}(\overline{x})\leq\overline{f}(\overline{x})+\varepsilon d(x_0,\overline{x})\leq \overline{f}(x_0)$. In other words,
$f(\overline{x})\preccurlyeq f(x_0)$, which shows the part (a). Now, going back to $f(\overline{x})+[\varepsilon d(x_0,\overline{x}),\varepsilon d(x_0,\overline{x})]\preccurlyeq f(x_0)$, by the hypotheses, one has $\varepsilon d(x_0,\overline{x})\leq \underline{f}(x_0)-\underline{f}(\overline{x})\leq\varepsilon$ and $\varepsilon d(x_0,\overline{x})\leq \overline{f}(x_0)-\overline{f}(\overline{x})\leq\varepsilon$. Then  $d(x_0,\overline{x})\leq1$, which shows the part (b).

Note that $f(x)+[\varepsilon d(x,\overline{x}),\varepsilon d(x,\overline{x})]\not\preccurlyeq f(\overline{x})$ for all $x\in S_0\backslash\{\overline{x}\}$. In order to prove the part (c), it remains to show that $f(x)+[\varepsilon d(x,\overline{x}),\varepsilon d(x,\overline{x})]\not\preccurlyeq f(\overline{x})$ for all $x\in X\backslash S_0$. If it is not true, then there would be some $v\in X\backslash S_0$ such that $f(v)+[\varepsilon d(v,\overline{x}),\varepsilon d(v,\overline{x})]\preccurlyeq f(\overline{x})$. Thanks to $f(\overline{x})+[\varepsilon d(x_0,\overline{x}),\varepsilon d(x_0,\overline{x})]\preccurlyeq f(x_0)$, similar to the previous proofs, we can deduce that $f(v)+[\varepsilon d(x_0,v),\varepsilon d(x_0,v)]\leq f(x_0)$, which contradicts the fact that $v\not\in S_0$. Thus,  one has $f(x)+[\varepsilon d(x,\overline{x}),\varepsilon d(x,\overline{x})]\not\preccurlyeq f(\overline{x})$ for $x\neq x_0$, which shows the part (c).
\end{proof}
\begin{remark}\label{re3.1}
Clearly, if $f$ is a real-valued function, that is, $\underline{f}=\overline{f}$, then Theorem \ref{th3.1} is reduced to Theorem 1 in \cite{Ekeland2}.
\end{remark}

\begin{remark}\label{re3.2}
The part (c) of Theorem \ref{th3.1} implies that, for any $x\in X$, $f(x)+[\varepsilon d(x,\overline{x}),\varepsilon d(x,\overline{x})]\not\prec f(\overline{x})$. Indeed, in view of  Remark \ref{re2.4}, we only need to show that $f(x)+[\varepsilon d(x,\overline{x}),\varepsilon d(x,\overline{x})]\not\prec f(\overline{x})$ holds if $x=\overline{x}$. Suppose to the contrary that $f(\overline{x})+[\varepsilon d(\overline{x},\overline{x}),\varepsilon d(\overline{x},\overline{x})]\prec f(\overline{x})$. Then
$\underline{f}(\overline{x})< \underline{f}(\overline{x})$ and $\overline{f}(\overline{x})<\overline{f}(\overline{x})$, which is a contradiction.
\end{remark}

\begin{remark}\label{re3.3}
Using Remark \ref{re3.2},  Theorem \ref{th3.1} indicates that $\overline{x}$ is a minimal solution of (IOP) with the objective function $f(\cdot)+[\varepsilon d(\cdot,\overline{x}),\varepsilon d(\cdot,\overline{x})]$ defined on $X$.
\end{remark}

Now we give the following example to illustrate Theorem \ref{th3.1}.
\begin{example}
Let $X=\mathbb{R}$ and $f:X\to \mathcal{I}$ be an interval-valued function defined by
\begin{equation}
f(x)=
\begin{cases}
[e^x,e^x+1] & \text{if $x<0,$}\\
[\frac{1}{2},\frac{3}{2}] & \text{if $x=0,$}\\
[e^{-x},e^{-x}+1] &\text{if $x>0.$}\nonumber
\end{cases}
\end{equation}
Then we can see that
\begin{equation}
\begin{aligned}
\underline{f}(x)=
\begin{cases}
e^x & \text{if $x<0,$}\\
\frac{1}{2} & \text{if $x=0,$}\\
e^{-x} &\text{if $x>0,$}\nonumber
\end{cases}
\end{aligned}
\quad
\begin{aligned}
\overline{f}(x)=
\begin{cases}
e^x+1 & \text{if $x<0,$}\\
\frac{3}{2} & \text{if $x=0,$}\\
e^{-x}+1 &\text{if $x>0.$}\nonumber
\end{cases}
\end{aligned}
\end{equation}
Also, we can see that $f$ is $\preccurlyeq$-lower semicontinuous and $\preccurlyeq$-lower bounded. Thus, all the conditions of Theorem \ref{th3.1} are satisfied. Moreover, we know that $\inf_{x\in \mathbb{R}}\underline{f}(x)=0$ and $\inf_{x\in \mathbb{R}}\overline{f}(x)=1$. Take $0<\varepsilon<\frac{1}{2}$. Let $x_0\in\mathbb{R}$ be such that
$$f(x_0)\preccurlyeq\left[\inf_{x\in \mathbb{R}}\underline{f}(x)+\varepsilon,\inf_{x\in \mathbb{R}}\overline{f}(x)+\varepsilon\right].$$
Then $f(x_0)\preccurlyeq[\varepsilon,\varepsilon+1]$. Clearly, $x_0\neq0$. Assume that $x_0<0$. Then  $x_0\leq\ln \varepsilon$. Setting $\overline{x}=x_0-1$, we have $f(\overline{x})=[e^{\overline{x}},e^{\overline{x}}+1]\preccurlyeq f(x_0)$ and $|x_0-\overline{x}|=1$. Now we claim that, for any $x\neq\overline{x}$, $f(x)+[\varepsilon|x-\overline{x}|,\varepsilon|x-\overline{x}|]\not\preccurlyeq f(\overline{x})$. When $x<\overline{x}$, we have $f(x)+[\varepsilon|x-\overline{x}|,\varepsilon|x-\overline{x}|]=[e^x+\varepsilon (\overline{x}-x),e^x+\varepsilon(\overline{x}-x)+1]$ and so $e^x+\varepsilon(\overline{x}-x)>e^{\overline{x}}$. This shows that $ f(\overline{x})=[e^{\overline{x}},e^{\overline{x}}+1]\prec f(x)+[\varepsilon|x-\overline{x}|,\varepsilon|x-\overline{x}|]$ for $x<\overline{x}$. When $x>\overline{x}$, we have the following four cases.

Case 1. If $\overline{x}<x<0$, then $f(x)+[\varepsilon|x-\overline{x}|,\varepsilon|x-\overline{x}|]=[e^x+\varepsilon (x-\overline{x}),e^x+\varepsilon(x-\overline{x})+1]$. It follows from  $e^x>e^{\overline{x}}$ that $ f(\overline{x})=[e^{\overline{x}},e^{\overline{x}}+1]\prec f(x)+[\varepsilon|x-\overline{x}|,\varepsilon|x-\overline{x}|]$ for $\overline{x}<x<0$.

Case 2. If $\overline{x}<x=0$, then $f(x)+[\varepsilon|x-\overline{x}|,\varepsilon|x-\overline{x}|]=[\frac{1}{2}+\varepsilon (1-x_0),\frac{3}{2}+\varepsilon(1-x_0)]$. Then we can calculate $\frac{1}{2}+\varepsilon (1-x_0)>\varepsilon(2-x_0)>e^{\overline{x}}$. Thus,
$f(\overline{x})=[e^{\overline{x}},e^{\overline{x}}+1]\prec f(x)+[\varepsilon|x-\overline{x}|,\varepsilon|x-\overline{x}|]$ for $x=0$.

Case 3. If $\overline{x}<0<x<1-x_0$, then
$$f(x)+[\varepsilon|x-\overline{x}|,\varepsilon|x-\overline{x}|]=[e^{-x}+\varepsilon (x-\overline{x}),e^{-x}+\varepsilon(x-\overline{x})+1].$$
Thanks to the fact that $e^{-x}>e^{\overline{x}}$, it follows that
$f(\overline{x})=[e^{\overline{x}},e^{\overline{x}}+1]\prec f(x)+[\varepsilon|x-\overline{x}|,\varepsilon|x-\overline{x}|]$ for $0<x<1-x_0$.

Case 4. If $\overline{x}<0<1-x_0\le x$, then $f(x)+[\varepsilon|x-\overline{x}|,\varepsilon|x-\overline{x}|]=[e^{-x}+\varepsilon (x-\overline{x}),e^{-x}+\varepsilon(x-\overline{x})+1]$. It is easy to check that
$$e^{-x}+\varepsilon (x-\overline{x})\geq e^{x_0-1}+2\varepsilon(1-x_0)>e^{x_0-1}=e^{\overline{x}}$$
and so $f(\overline{x})=[e^{\overline{x}},e^{\overline{x}}+1]\prec f(x)+[\varepsilon|x-\overline{x}|,\varepsilon|x-\overline{x}|]$ for $x>1-x_0$.

Consequently, we conclude that, when $x_0<0$, for $x\neq\overline{x}$, $f(x)+[\varepsilon|x-\overline{x}|,\varepsilon|x-\overline{x}|]\not\preccurlyeq f(\overline{x})$. Similarly, we can show that, when $x_0>0$, for $x\neq\overline{x}$, $f(x)+[\varepsilon|x-\overline{x}|,\varepsilon|x-\overline{x}|]\not\preccurlyeq f(\overline{x})$.
\end{example}

The following corollary is immediately obtained from Theorem \ref{th3.1}.
\begin{corollary}\label{co3.1}
Let $(X,d)$ be a complete metric space and $f:X\to \mathcal{I}$ be an interval-valued function with $f(x)=[\underline{f}(x),\overline{f}(x)]$. Suppose that $f$ is $\preccurlyeq$-lower semicontinuous and $\preccurlyeq$-lower bounded.
Then for every $\varepsilon>0$, there exists $x_{\varepsilon}\in X$ such that
\begin{itemize}
\item[(a)] $f(x_{\varepsilon})\preccurlyeq [\inf_{x\in X}\underline f(x)+\varepsilon,\inf_{x\in X}\overline f(x)+\varepsilon]$;
\item[(b)] $\forall x\neq {x_\varepsilon}$, $f(x)+[\varepsilon d(x,{x_\varepsilon}),\varepsilon d(x,{x_\varepsilon})]\not\preccurlyeq f(x_\varepsilon)$.
\end{itemize}
\end{corollary}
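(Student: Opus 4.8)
The plan is to obtain Corollary \ref{co3.1} as a direct specialization of Theorem \ref{th3.1}, the only real work being to manufacture a suitable starting point $x_0$ out of the $\preccurlyeq$-lower boundedness hypothesis. First I would record that, since $f$ is $\preccurlyeq$-lower bounded, Remark \ref{re2.6} guarantees that both $\underline{f}$ and $\overline{f}$ are lower bounded on $X$; hence the two real numbers $\inf_{x\in X}\underline{f}(x)$ and $\inf_{x\in X}\overline{f}(x)$ are finite, and the interval $\textrm{Inf}_{x\in X}f(x)=[\inf_{x\in X}\underline{f}(x),\inf_{x\in X}\overline{f}(x)]$ exists.

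Next, for the prescribed $\varepsilon>0$ I would invoke the approximate-infimum property recorded in Section \ref{sec2}, namely that whenever $\textrm{Inf}_{x\in X}f(x)$ exists there is a point $x_0\in X$ with $f(x_0)\preccurlyeq \textrm{Inf}_{x\in X}f(x)+[\varepsilon,\varepsilon]$. Unwinding the definition of $\preccurlyeq$, this $x_0$ satisfies simultaneously $\underline{f}(x_0)\leq\inf_{x\in X}\underline{f}(x)+\varepsilon$ and $\overline{f}(x_0)\leq\inf_{x\in X}\overline{f}(x)+\varepsilon$, which are exactly the two scalar conditions on the starting point demanded by Theorem \ref{th3.1}. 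I would then apply Theorem \ref{th3.1} to this $\varepsilon$ and this $x_0$, and set $x_\varepsilon:=\overline{x}$, where $\overline{x}$ is the point produced by the theorem.

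It then remains to translate conclusions (a)--(c) of the theorem into (a)--(b) of the corollary. Conclusion (c) of Theorem \ref{th3.1} reads $f(x)+[\varepsilon d(x,\overline{x}),\varepsilon d(x,\overline{x})]\not\preccurlyeq f(\overline{x})$ for all $x\neq\overline{x}$, which is verbatim conclusion (b) of the corollary once $\overline{x}$ is renamed $x_\varepsilon$. For conclusion (a) of the corollary I would chain conclusion (a) of the theorem, $f(x_\varepsilon)=f(\overline{x})\preccurlyeq f(x_0)$, with the defining property of $x_0$, namely $f(x_0)\preccurlyeq[\inf_{x\in X}\underline{f}(x)+\varepsilon,\inf_{x\in X}\overline{f}(x)+\varepsilon]$, and finish by the transitivity of $\preccurlyeq$ recorded in Remark \ref{re2.3}(ii).

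The only genuinely delicate point—and the step I expect to be the main obstacle—is the second one: producing a single point $x_0$ that is within $\varepsilon$ of $\inf_{x\in X}\underline{f}$ and within $\varepsilon$ of $\inf_{x\in X}\overline{f}$ at the same time. A point that nearly minimizes $\underline{f}$ need not nearly minimize $\overline{f}$, so this simultaneous approximate minimization is not automatic from the finiteness of the two infima alone; it is precisely the content of the approximate-infimum assertion about $\textrm{Inf}_{x\in X}f(x)$ recorded in Section \ref{sec2}, on which the whole reduction rests. Everything else is a routine rewriting using the definition of $\preccurlyeq$ and the transitivity of the order, with no further appeal to completeness or to $\preccurlyeq$-lower semicontinuity beyond what Theorem \ref{th3.1} has already consumed.
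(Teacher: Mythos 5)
Your reduction is exactly the one the paper intends: the paper offers no separate argument for Corollary \ref{co3.1} beyond calling it ``immediately obtained from Theorem \ref{th3.1}'', and your route --- use $\preccurlyeq$-lower boundedness to get $\mathrm{Inf}_{x\in X}f(x)$, invoke the approximate-infimum assertion of Section \ref{sec2} to manufacture a starting point $x_0$ with $\underline{f}(x_0)\leq\inf_{x\in X}\underline{f}(x)+\varepsilon$ and $\overline{f}(x_0)\leq\inf_{x\in X}\overline{f}(x)+\varepsilon$ simultaneously, apply Theorem \ref{th3.1}, and chain (a) by transitivity --- is the only sensible way to fill in that ``immediately''. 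You also correctly isolated the one non-routine step.

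However, that step is a genuine gap, and it does not go away by citing Section \ref{sec2}: the assertion there that $\mathrm{Inf}_{x\in X}f(x)$ can always be $[\varepsilon,\varepsilon]$-approximated by a value of $f$ is false in general, because a point nearly minimizing $\underline{f}$ need not nearly minimize $\overline{f}$ --- which is precisely the obstruction you yourself flagged. Concretely, take $X=\{1,2\}$ with the discrete metric, $f(1)=[0,10]$, $f(2)=[5,6]$; then $f$ is $\preccurlyeq$-lower semicontinuous and $\preccurlyeq$-lower bounded, $\mathrm{Inf}_{x\in X}f(x)=[0,6]$, yet for $\varepsilon=1$ neither $f(1)\preccurlyeq[1,7]$ nor $f(2)\preccurlyeq[1,7]$ holds, so no $x_0$ (and no $x_\varepsilon$ satisfying conclusion (a)) exists. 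Theorem \ref{th3.1} survives because it only quantifies over those $x_0$ that happen to satisfy both scalar inequalities (and is vacuous when none do), but the corollary asserts existence unconditionally and therefore genuinely needs the simultaneous approximate minimizer. So your proof is faithful to the paper but inherits the paper's own unproved --- and in fact incorrect --- preliminary claim; to your credit, you identified exactly where the whole reduction rests.
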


Now we turn to establish new versions of Ekeland's variational principle involving the generalized Hukuhara G\^{a}teaux differentiability of interval-valued functions. To this end, we need the following definitions.

\begin{definition}\label{de3.1}(\cite{GCMD})
Let $X$ be a Banach space.  An interval-valued function $f: X\to \mathcal{I}$ is said to be linear if
\begin{itemize}
\item[(i)]  for any $x\in X$ and $\lambda\in\mathbb{R}$, $f(\lambda x)=\lambda f(x)$;
\item[(ii)] for any $x,y\in X$, either $f(x+y)=f(x)+f(y)$, or $f(x+y)$ and $f(x)+f(y)$ are incomparable in the sense of (i) in Definition \ref{de2.2}.
\end{itemize}
\end{definition}

Based on the $gH$-difference, the following concept of G\^{a}teaux differentiability of interval-valued functions was introduced by \cite{GCMD}.
\begin{definition} (\cite{GCMD}) \label{de3.2}
Let $X$ be a Banach space and $f: X\to \mathcal{I}$ be an interval-valued function with $f(x)=[\underline{f}(x),\overline{f}(x)]$. We say that $f$ is $gH$-G\^{a}teaux differentiable at $x_0\in X$ if, for every $h\in X$ such that
$$
f'(x_0)(h):=\lim_{t\rightarrow0_+}\frac{f(x_0+th)\ominus_{gH} f(x_0)}{t}
$$
exists, and $f'(x_0)$ is a Hausdorff continuous and linear interval-valued function from $X$ to $\mathcal{I}$.
The interval-valued function $f$ is said to be $gH$-G\^{a}teaux differentiable on $X$ if $f$ is $gH$-G\^{a}teaux differentiable at every point in $X$.
\end{definition}

\begin{theorem}\label{th3.2}
Let $X$ be a Banach space and $f:X\to \mathcal{I}$ be an interval-valued function with $f(x)=[\underline{f}(x),\overline{f}(x)]$. Suppose that
\begin{itemize}
\item[(i)] $f$ is $gH$-G\^{a}teaux differentiable;
\item[(ii)] $f$ is $\preccurlyeq$-lower semicontinuous and $\preccurlyeq$-lower bounded.
\end{itemize}
Then for any $\varepsilon>0$, there exists $x_{\varepsilon}\in X$ such that
\begin{itemize}
\item[(a)] $f(x_{\varepsilon})\preccurlyeq [\inf_{x\in X}\underline f(x)+\varepsilon,\inf_{x\in X}\overline f(x)+\varepsilon]$;
\item[(b)]$\forall h\in X$, $\|h\|_X=1$, $f'(x_{\varepsilon})(h)\not\prec [-\varepsilon,-\varepsilon]$, where $\|\cdot\|_{X}$ is the norm on $X$.
\end{itemize}
\end{theorem}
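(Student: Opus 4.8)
The plan is to derive both conclusions from Corollary \ref{co3.1}, which is available because $X$, being a Banach space, carries the metric $d(x,y)=\|x-y\|_X$ and hypothesis (ii) supplies the $\preccurlyeq$-lower semicontinuity and $\preccurlyeq$-lower boundedness the corollary requires. Applying it produces a point $x_\varepsilon$ for which conclusion (a) is literally part (a) of the corollary, so no work is needed there; the whole argument goes into turning the non-domination property
$$f(x)+[\varepsilon d(x,x_\varepsilon),\varepsilon d(x,x_\varepsilon)]\not\preccurlyeq f(x_\varepsilon),\qquad \forall\,x\neq x_\varepsilon$$
(part (b) of the corollary) into the differential inequality (b).

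To do so I would fix $h\in X$ with $\|h\|_X=1$ and feed the points $x=x_\varepsilon+th$, $t>0$, into the displayed property; since $d(x_\varepsilon+th,x_\varepsilon)=t$, it reads $f(x_\varepsilon+th)+[\varepsilon t,\varepsilon t]\not\preccurlyeq f(x_\varepsilon)$. Unwinding $\not\preccurlyeq$ through Definition \ref{de2.2}(iii), for each $t>0$ at least one of $\underline{f}(x_\varepsilon+th)+\varepsilon t\le\underline{f}(x_\varepsilon)$ and $\overline{f}(x_\varepsilon+th)+\varepsilon t\le\overline{f}(x_\varepsilon)$ fails. Setting $p(t)=\frac{\underline{f}(x_\varepsilon+th)-\underline{f}(x_\varepsilon)}{t}$ and $q(t)=\frac{\overline{f}(x_\varepsilon+th)-\overline{f}(x_\varepsilon)}{t}$ and dividing by $t>0$, this disjunction becomes exactly $\max\{p(t),q(t)\}>-\varepsilon$ for every $t>0$.

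Next I would pass to the limit. By Remark \ref{re2.1}(i) together with the scalar-multiplication rule (multiplier $1/t>0$), the difference quotient equals
$$\frac{f(x_\varepsilon+th)\ominus_{gH}f(x_\varepsilon)}{t}=\bigl[\min\{p(t),q(t)\},\,\max\{p(t),q(t)\}\bigr],$$
so its upper endpoint is precisely $\max\{p(t),q(t)\}$. Hypothesis (i) makes this quotient converge in the Hausdorff distance to $f'(x_\varepsilon)(h)=:[\underline{L},\overline{L}]$ as $t\to0_+$, and since Hausdorff convergence of intervals is equivalent to convergence of both endpoints, $\max\{p(t),q(t)\}\to\overline{L}$. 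As every term exceeds $-\varepsilon$, the limit gives $\overline{L}\ge-\varepsilon$, and by Definition \ref{de2.2}(ii) this is exactly $f'(x_\varepsilon)(h)\not\prec[-\varepsilon,-\varepsilon]$, i.e. (b).

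The point I expect to be delicate is that the scalar quotients $p(t)$ and $q(t)$ need not converge separately, because the minimum and maximum defining the $gH$-difference can interchange as $t\to0_+$; one therefore cannot analyse $\underline{f}$ and $\overline{f}$ one at a time. What makes the scheme work is that $\not\preccurlyeq$ delivers control of precisely $\max\{p(t),q(t)\}$, the upper endpoint of the $gH$-quotient, and that endpoint does converge to $\overline{L}$. The mild loss in passing from the strict finite-scale inequality to the non-strict limit $\overline{L}\ge-\varepsilon$ is exactly why the conclusion is stated with $\not\prec$ rather than $\not\preccurlyeq$.
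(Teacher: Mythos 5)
Your proposal is correct and follows essentially the same route as the paper: apply Corollary \ref{co3.1} to get $x_\varepsilon$, plug $x=x_\varepsilon+th$ into the non-domination property, recognize the resulting expression as the $gH$-difference quotient, and pass to the limit $t\to0_+$. The only cosmetic difference is that you unwind the order relations by hand at the level of the endpoint functions (showing the condition controls exactly $\max\{p(t),q(t)\}$, the upper endpoint of the quotient), whereas the paper invokes Remark \ref{re2.5} and Proposition \ref{p2.2} to do the same bookkeeping; your closing observation about why the limit only yields $\not\prec$ rather than $\not\preccurlyeq$ is accurate.
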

\begin{proof}
According to Corollary \ref{co3.1} and Remark \ref{re3.2}, we know that there exists $x_{\varepsilon}\in X$ such that
\begin{equation}\label{eq3.1}
f(x_{\varepsilon})\preccurlyeq \left[\inf_{x\in X}\underline f(x)+\varepsilon,\inf_{x\in X}\overline f(x)+\varepsilon\right]
\end{equation}
 and
\begin{equation}\label{eq3.2}
\forall x\in X,~f(x)+[\varepsilon\|x-x_{\varepsilon}\|_X,\varepsilon\|x-x_{\varepsilon}\|_X]\not\prec f(x_{\varepsilon}).
\end{equation}
Thus, the part (a) is obtained by (\ref{eq3.1}).  For any $h\in X$ with $\|h\|_X=1$, set $x=x_{\varepsilon}+th$ with $t>0$. Then by Remark \ref{re2.5} and (\ref{eq3.2}), we have
$$
\frac {f(x_{\varepsilon}+th)\ominus_{gH}f(x_{\varepsilon})}{t}\not\prec [-\varepsilon,-\varepsilon].
$$
Moreover, applying Proposition \ref{p2.2} and $gH$-G\^{a}teaux differentiability of $f$, one has
$$
f'(x_{\varepsilon})(h)=\lim_{t\rightarrow0_+}\frac {f(x_{\varepsilon}+th)\ominus_{gH}f(x_{\varepsilon})}{t}\not\prec [-\varepsilon,-\varepsilon],
$$
which shows the part (b).
\end{proof}
\begin{remark}\label{re3.4}
If $f$ is a real-valued function, then we can deduce that Theorem \ref{th3.2} corresponds to Theorem 8 in \cite{Ekeland2}.
\end{remark}

\begin{theorem}\label{th3.3}
Let $X$ be a Banach space and $f:X\to \mathcal{I}$ be an interval-valued function with $f(x)=[\underline{f}(x),\overline{f}(x)]$. Suppose that
\begin{itemize}
\item[(i)] $f$ is $gH$-G\^{a}teaux differentiable;
\item[(ii)] $f$ is $\preccurlyeq$-lower semicontinuous and $\preccurlyeq$-lower bounded.
\end{itemize}
Then there exists a sequence $\{x_n\}$ in $X$ such that
\begin{itemize}
\item[(a)] $f(x_n)\rightarrow [\inf_{x\in X}\underline f(x),\inf_{x\in X}\overline f(x)]$;
\item[(b)]$\forall h\in X$, $\|h\|_X=1$, $f'(x_n)(h)\rightarrow[0,0]$  or $0\in f'(x_n)(h)$ for $n=1,2,\cdots$.
\end{itemize}
\end{theorem}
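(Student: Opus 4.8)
The plan is to manufacture the sequence by iterating Theorem~\ref{th3.2} along a tolerance that tends to zero. I would fix any sequence $\varepsilon_n\downarrow 0$, for definiteness $\varepsilon_n=\frac1n$, and apply Theorem~\ref{th3.2} with $\varepsilon=\varepsilon_n$ for each $n$ (the hypotheses (i)--(ii) are assumed once and for all, so they hold at every stage). This produces, for each $n$, a point $x_n\in X$ with
$$
f(x_n)\preccurlyeq\Big[\inf_{x\in X}\underline f(x)+\varepsilon_n,\ \inf_{x\in X}\overline f(x)+\varepsilon_n\Big]
$$
and
$$
f'(x_n)(h)\not\prec[-\varepsilon_n,-\varepsilon_n]\qquad\text{for every }h\in X\text{ with }\|h\|_X=1 .
$$
The candidate sequence is $\{x_n\}$, and the two assertions are obtained by letting $n\to\infty$.

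Part (a) is a two-sided squeeze. The definition of the infimum gives the uniform lower bound $[\inf_{x\in X}\underline f(x),\inf_{x\in X}\overline f(x)]\preccurlyeq f(x_n)$, while the first display above is the matching upper bound. Comparing endpoints, $\underline f(x_n)$ lies in $[\inf_{x\in X}\underline f(x),\inf_{x\in X}\underline f(x)+\varepsilon_n]$ and $\overline f(x_n)$ in $[\inf_{x\in X}\overline f(x),\inf_{x\in X}\overline f(x)+\varepsilon_n]$, whence both $|\underline f(x_n)-\inf_{x\in X}\underline f(x)|$ and $|\overline f(x_n)-\inf_{x\in X}\overline f(x)|$ are at most $\varepsilon_n$. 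Since Hausdorff convergence of intervals is coordinatewise convergence of endpoints (Section~\ref{sec2}), this is precisely $f(x_n)\to[\inf_{x\in X}\underline f(x),\inf_{x\in X}\overline f(x)]$.

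For part (b) I would exploit the homogeneity of the derivative. Writing $f'(x_n)(h)=[\underline g_n(h),\overline g_n(h)]$ and using that $f'(x_n)$ is linear, one has $f'(x_n)(-h)=-f'(x_n)(h)=[-\overline g_n(h),-\underline g_n(h)]$, and $-h$ is again a unit vector. Feeding both $h$ and $-h$ into $f'(x_n)(\cdot)\not\prec[-\varepsilon_n,-\varepsilon_n]$ and unwinding Definition~\ref{de2.2} together with $\underline g_n(h)\le\overline g_n(h)$, the relation at $h$ collapses to $\overline g_n(h)\ge-\varepsilon_n$ and the relation at $-h$ collapses to $\underline g_n(h)\le\varepsilon_n$. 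Thus the lower endpoint of $f'(x_n)(h)$ never rises above $\varepsilon_n$ and the upper endpoint never drops below $-\varepsilon_n$. I would then split according to whether $0\in f'(x_n)(h)$: if $0\in f'(x_n)(h)$ for every $n$ the second alternative holds outright, and otherwise the endpoint of $f'(x_n)(h)$ nearest the origin is trapped between $\mp\varepsilon_n$ and $0$, which I would try to push to force $f'(x_n)(h)\to[0,0]$. Any inequalities surviving to the limit would be justified by Proposition~\ref{p2.2}.

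The genuinely delicate step, and the one I expect to be the main obstacle, is closing part (b). The conclusion coming out of Theorem~\ref{th3.2} is intrinsically one-sided: the relation $\not\prec[-\varepsilon_n,-\varepsilon_n]$ pins down only the endpoint of the derivative interval lying toward the positive side, and even after playing $h$ against $-h$ one controls $\underline g_n(h)$ only from above and $\overline g_n(h)$ only from below. Nothing in this information, taken in isolation, prevents the complementary endpoint from staying far from $0$ while $0$ remains outside the interval, so the clean dichotomy ``$f'(x_n)(h)\to[0,0]$ or $0\in f'(x_n)(h)$'' is not reached by endpoint bookkeeping alone; making it go through is exactly where the argument must be pinned down with the greatest care, presumably by extracting extra structure from the linearity and Hausdorff continuity of $f'(x_n)$ rather than from the $\not\prec$ inequality by itself.
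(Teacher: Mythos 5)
Your reduction to Theorem~\ref{th3.2} with $\varepsilon_n=\frac1n$, the squeeze argument for part (a), and the extraction of the two one-sided bounds $\overline{g}_n(h)\ge-\varepsilon_n$ and $\underline{g}_n(h)\le\varepsilon_n$ (writing $f'(x_n)(h)=[\underline{g}_n(h),\overline{g}_n(h)]$) by feeding $h$ and $-h$ into the conclusion of Theorem~\ref{th3.2} are exactly the paper's steps. The genuine gap is that you stop there: part (b) is never derived, and you say so yourself. As submitted, the proposal proves (a) and assembles the raw material for (b) but does not close it, so it is incomplete.

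For comparison, the paper finishes (b) by splitting on whether $\underline{g}_n(h)<-\varepsilon_n$ or $\underline{g}_n(h)\ge-\varepsilon_n$. In the second case it asserts $-\varepsilon_n\le\underline{g}_n(h)\le\overline{g}_n(h)\le\varepsilon_n$, whence $f'(x_n)(h)\to[0,0]$; in the first case it asserts $\overline{g}_n(h)>\varepsilon_n$ and concludes $0\in f'(x_n)(h)$. You should note that the obstacle you flag is precisely the delicate point of that case analysis: the inequality $\overline{g}_n(h)>\varepsilon_n$ in the first case is the statement that $-h$ also falls in the first case, which does not follow from $\underline{g}_n(h)<-\varepsilon_n$ alone, and symmetrically $\overline{g}_n(h)\le\varepsilon_n$ in the second case presupposes that $-h$ falls in the second case. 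The mixed configurations (for instance $\underline{g}_n(h)<-\varepsilon_n$ together with $-\varepsilon_n\le\overline{g}_n(h)<0$) are not excluded by the relation $f'(x_n)(\pm h)\not\prec[-\varepsilon_n,-\varepsilon_n]$, and in such a configuration neither disjunct of (b) holds. So your diagnosis is accurate --- endpoint bookkeeping from the $\not\prec$ relations alone does not yield the stated dichotomy --- but a complete proof must supply the missing ingredient that rules out the mixed cases (presumably extra structure of the $gH$-G\^{a}teaux derivative, as you suggest), and your proposal does not do so; it only records that something is needed there.
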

\begin{proof}
Taking $\varepsilon_n>0$ with $\varepsilon_n\rightarrow0$, it follows from Theorem \ref{th3.2} that there exists a sequence $\{x_n\}$ in $X$ such that
\begin{equation}\label{eq3.3}
f(x_n)\preccurlyeq \left[\inf_{x\in X}\underline f(x)+\varepsilon_n,\inf_{x\in X}\overline f(x)+\varepsilon_n\right]
\end{equation}
and
\begin{equation}\label{eq3.4}
\forall h\in X,\quad \|h\|_X=1,\quad f'(x_n)(h)\not\prec [-\varepsilon_n,-\varepsilon_n ].
\end{equation}
By (\ref{eq3.3}), we have
$$
\inf_{x\in X}\underline f(x)\leq\underline f(x_n)\leq \inf_{x\in X}\underline f(x)+\varepsilon_n, \quad \inf_{x\in X}\overline{f}(x)\leq \overline{f}(x_n)\leq\inf_{x\in X}\overline f(x)+\varepsilon_n.
$$
It follows that $\underline f(x_n)\rightarrow\inf_{x\in X}\underline f(x)$ and $\overline{f}(x_n)\rightarrow\inf_{x\in X}\overline{f}(x)$. Thus, $f(x_n)\rightarrow [\inf_{x\in X}\underline f(x),\inf_{x\in X}\overline f(x)]$. We assume that $f'(x_n)(h):=[\underline{g}(x_n)(h),\overline{g}(x_n)(h)]$, where $\underline{g}(\cdot)(h)$ and $\overline{g}(\cdot)(h)$ depend on $f'(\cdot)(h)$. Then by (\ref{eq3.4}) and the definition of $\not\prec$ and the fact that $\underline{g}(x_n)(h)\leq\overline{g}(x_n)(h)$ , we have the following two cases:

Case 1. $\underline{g}(x_n)(h)< -\varepsilon_n$, $\overline{g}(x_n)(h)\geq-\varepsilon_n$;

Case 2. $\underline{g}(x_n)(h)\geq -\varepsilon_n$, $\overline{g}(x_n)(h)\geq-\varepsilon_n$.

From the linearity of $f'(x_n)$, one has $f'(x_n)(-h)=-f'(x_n)(h)$. Thus, $\underline{g}(x_n)(-h)=-\overline{g}(x_n)(h)$ and
$\overline{g}(x_n)(-h)=-\underline{g}(x_n)(h)$. For Case 1, we can see that
$\underline{g}(x_n)(h)< -\varepsilon_n$ and $\overline{g}(x_n)(h)>\varepsilon_n$ and so $0\in f'(x_n)(h)$ for $n=1,2,\cdots$. For Case 2, it follows that $-\varepsilon_n\leq\underline{g}(x_n)(h)\leq\overline{g}(x_n)(h)\leq\varepsilon_n$, which says that $\underline{g}(x_n)(h)\rightarrow0$ and $\overline{g}(x_n)(h)\rightarrow0$. Namely, $f'(x_n)(h)\rightarrow[0,0]$. Consequently, we conclude that the part (b) holds.
\end{proof}

\begin{remark}\label{re3.5}
If $f$ is a real-valued function, then the part (b) of Theorem \ref{th3.3} is reduced to $f'(x_n)(h)\rightarrow0$.
\end{remark}

Now we give the following example to illustrate Theorem \ref{th3.3}.
\begin{example}
Let $X=\mathbb{R}$ and $f:X\to \mathcal{I}$ be an interval-valued function defined by
$$f(x)=\left[\frac{1}{x^2+1},\frac{1}{x^2+1}+1\right],\quad \forall x\in\mathbb{R}.$$
Then we see that $\underline{f}(x)=\frac{1}{x^2+1}$ and $\overline{f}(x)=\frac{1}{x^2+1}+1$ and so $f$ is Hausdorff continuous. Moreover, it follows that $\inf_{x\in \mathbb{R}}\underline{f}(x)=0$ and $\inf_{x\in \mathbb{R}}\overline{f}(x)=1$. Also, $[0,1]\preccurlyeq f(x)$ for all $x\in\mathbb{R}$.
For any $x\in \mathbb{R}$, $h\in \mathbb{R}$ and $t>0$,
\begin{align}
f'(x)(h):&=\lim_{t\rightarrow0_+}\frac{f(x+th)\ominus_{gH}f(x)}{t}\nonumber\\
&=\lim_{t\rightarrow0_+}\frac{1}{t}\left(\left[\frac{1}{(x+th)^2+1},\frac{1}{(x+th)^2+1}+1\right]\ominus_{gH}\left[\frac{1}{x^2+1},\frac{1}{x^2+1}+1\right]\right)\nonumber\\
&=\lim_{t\rightarrow0_+}\frac{1}{t}\left[\frac{-2thx-t^2h^2}{((x+th)^2+1)(x^2+1)},\frac{-2thx-t^2h^2}{((x+th)^2+1)(x^2+1)}\right]\nonumber\\
&=\left[\frac{-2hx}{(x^2+1)^2},\frac{-2hx}{(x^2+1)^2}\right].\nonumber
\end{align}
It is easy to see that $f'(x)(\cdot)$ is Hausdorff continuous and linear on $\mathbb{R}$ and so $f$ is $gH$-G\^{a}teaux differentiable on $\mathbb{R}$. Thus, we know that all the conditions of Theorem \ref{th3.3} are satisfied.
Take $x_n=n$ for $n=1,2,\cdots$. When $n\rightarrow+\infty$, one has $f(x_n)\rightarrow[0,1]=[\inf_{x\in\mathbb{R}}\underline{f}(x),\inf_{x\in\mathbb{R}}\overline{f}(x)]$ and $f'(x_n)(h)\rightarrow[0,0]$ for all $h\in\mathbb{R}$ with $|h|=1$.
\end{example}

The following theorem establishes a new version of Ekeland's variational principle for interval-valued bifunctions.
\begin{theorem}\label{th3.4}
Let $(X,d)$ be a complete metric space and $f:X\times X\to \mathcal{I}$ be an interval-valued bifunction with $f(x,y)=[\underline{f}(x,y),\overline{f}(x,y)]$. Assume that
\begin{itemize}
\item[(i)] for each $x\in X$, $f(x,\cdot)$ is $\preccurlyeq$-lower semicontinuous and $\preccurlyeq$-lower bounded;
\item[(ii)] $f$ satisfies the triangle inequality property, i.e., for any $x,y,z\in X$, $f(x,z)\preccurlyeq f(x,y)+f(y,z)$.
\end{itemize}
Then for every $\varepsilon>0$ and every $x_0\in X$, there exists $\overline{x}\in X$ such that
\begin{itemize}
\item[(a)] $f(x_0,\overline{x})\preccurlyeq f(x_0,x_0)$;
\item[(b)] $\forall x\neq \overline{x}$, $f(\overline{x},x)+[\varepsilon d(x,\overline{x}),\varepsilon d(x,\overline{x})]\not\preccurlyeq [0,0]$.
\end{itemize}
\end{theorem}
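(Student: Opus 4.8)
The plan is to reduce the problem to the single-variable Ekeland principle already established in Theorem \ref{th3.1} and then to transfer the conclusion from the first slot to the second slot of $f$ by means of the triangle inequality property (ii). First I would freeze the first argument and set $g(\cdot):=f(x_0,\cdot):X\to\mathcal{I}$. By hypothesis (i), $g$ is $\preccurlyeq$-lower semicontinuous and $\preccurlyeq$-lower bounded, so $g$ falls squarely within the framework of Theorem \ref{th3.1}. I would then define $S_0:=\{y\in X: g(y)+[\varepsilon d(x_0,y),\varepsilon d(x_0,y)]\preccurlyeq g(x_0)\}$ and, on $S_0$, the set-valued map $\Gamma(x):=\{y\in S_0: g(y)+[\varepsilon d(x,y),\varepsilon d(x,y)]\preccurlyeq g(x)\}$, exactly mirroring the construction in the proof of Theorem \ref{th3.1}. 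Note that $x_0\in S_0$ and that $S_0$ is closed, since $y\mapsto g(y)+[\varepsilon d(x_0,y),\varepsilon d(x_0,y)]$ is $\preccurlyeq$-lower semicontinuous by Proposition \ref{p2.3} and Remark \ref{re2.7}; hence $(S_0,d)$ is complete.

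Next I would verify the three hypotheses of Lemma \ref{le2.1} for $\Gamma$ verbatim as in Theorem \ref{th3.1}: that $x\in\Gamma(x)$ with $\Gamma(x)$ closed; that $\Gamma(y)\subseteq\Gamma(x)$ whenever $y\in\Gamma(x)$ (using the metric triangle inequality together with Proposition \ref{p2.1}); and that $d(x_n,x_{n+1})\to 0$ along any chain $x_{n+1}\in\Gamma(x_n)$ (because $\{\underline{g}(x_n)\}$ and $\{\overline{g}(x_n)\}$ are then decreasing and bounded below). It is worth stressing that none of this uses the approximate-minimizer restriction on $x_0$ imposed in Theorem \ref{th3.1}; that restriction is only needed there for the displacement estimate, which I do not claim here. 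Lemma \ref{le2.1} then yields $\overline{x}\in S_0$ with $\Gamma(\overline{x})=\{\overline{x}\}$. Membership $\overline{x}\in S_0$ gives $f(x_0,\overline{x})\preccurlyeq f(x_0,\overline{x})+[\varepsilon d(x_0,\overline{x}),\varepsilon d(x_0,\overline{x})]\preccurlyeq f(x_0,x_0)$, which is exactly (a); and, arguing as in the final paragraph of the proof of Theorem \ref{th3.1} to pass from $S_0$ to all of $X$, I obtain the intermediate minimality statement
$$\forall x\neq\overline{x},\qquad f(x_0,x)+[\varepsilon d(x,\overline{x}),\varepsilon d(x,\overline{x})]\not\preccurlyeq f(x_0,\overline{x}).\qquad (\ast)$$

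The decisive and genuinely new step is to convert $(\ast)$, which concerns the first slot $f(x_0,\cdot)$, into (b), which concerns the second slot $f(\overline{x},\cdot)$; this is precisely where the triangle inequality (ii) enters. Suppose (b) failed, i.e. there were $x\neq\overline{x}$ with $f(\overline{x},x)+[\varepsilon d(x,\overline{x}),\varepsilon d(x,\overline{x})]\preccurlyeq[0,0]$. Applying (ii) to the triple $(x_0,\overline{x},x)$ gives $f(x_0,x)\preccurlyeq f(x_0,\overline{x})+f(\overline{x},x)$, whence by Proposition \ref{p2.1}(i),
$$f(x_0,x)+[\varepsilon d(x,\overline{x}),\varepsilon d(x,\overline{x})]\preccurlyeq f(x_0,\overline{x})+\bigl(f(\overline{x},x)+[\varepsilon d(x,\overline{x}),\varepsilon d(x,\overline{x})]\bigr)\preccurlyeq f(x_0,\overline{x})+[0,0]=f(x_0,\overline{x}),$$
contradicting $(\ast)$. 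Hence (b) holds, which completes the argument.

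I expect the main obstacle to be exactly this transfer between slots. A naive attempt to run Lemma \ref{le2.1} with a map built diagonally from the bifunction (for instance comparing $f(x,y)$ with $f(x,x)$) breaks the inclusion $\Gamma(y)\subseteq\Gamma(x)$, because the triangle inequality then leaves a residual term $f(y,y)$ with $[0,0]\preccurlyeq f(y,y)$, so one would be forced to assume $f(y,y)=[0,0]$. Freezing the first variable to form $g=f(x_0,\cdot)$ sidesteps this entirely, confining all the monotonicity and completeness bookkeeping to the single-variable setting of Theorem \ref{th3.1}, and reserving the bifunction triangle inequality for the single clean contradiction displayed above; a secondary point requiring care is the extension of $(\ast)$ from $S_0$ to all of $X$, handled exactly as in Theorem \ref{th3.1}.
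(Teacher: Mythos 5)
Your proof is correct and follows essentially the same route as the paper: freeze the first argument, apply the interval-valued Ekeland principle to $F_{x_0}=f(x_0,\cdot)$ to get the intermediate statement $(\ast)$, and then use the triangle inequality property together with Proposition \ref{p2.1} to convert minimality in the first slot into conclusion (b) for the second slot, exactly as in the paper's contradiction argument. The only difference is one of bookkeeping: the paper simply cites Corollary \ref{co3.1} (somewhat loosely, since its conclusion (a) is stated relative to the infimum rather than to the prescribed $x_0$), whereas you re-run the Dancs--Heged\"{u}s--Medvegyev construction of Theorem \ref{th3.1} anchored at the given $x_0$ and correctly observe that the approximate-minimizer hypothesis is only needed for the displacement bound, which is not claimed here.
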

\begin{proof}
For fixed $x_0\in X$, let $F_{x_0}(x)=f(x_0,x)$ for all $x\in X$. Then applying Corollary \ref{co3.1} to $F_{x_0}$,  there exists $\overline{x}\in X$ such that
$$
F_{x_0}(\overline{x})\preccurlyeq F_{x_0}(x_0),
$$
$$
\forall x\neq \overline{x},\quad F_{x_0}(x)+[\varepsilon d(x,\overline{x}),\varepsilon d(x,\overline{x})]\not\preccurlyeq F_{x_0}(\overline{x}),
$$
that is,
\begin{equation}\label{eq3.5}
f(x_0,\overline{x})\preccurlyeq f(x_0,x_0),
\end{equation}
\begin{equation}\label{eq3.6}
\forall x\neq \overline{x},\quad f(x_0,x)+[\varepsilon d(x,\overline{x}),\varepsilon d(x,\overline{x})]\not\preccurlyeq f(x_0,\overline{x}).
\end{equation}
It follows from (\ref{eq3.5}) that the part (a) holds. Now we claim that the part (b) is true. Indeed, if not, then there exists $x'\in X\backslash\{\overline{x}\}$ such that
$f(\overline{x},x')+[\varepsilon d(x',\overline{x}),\varepsilon d(x',\overline{x})]\preccurlyeq[0,0]$.
Thanks to the fact that $f(x_0,x')\preccurlyeq f(x_0,\overline{x})+f(\overline{x},x')$, Proposition \ref{p2.1} leads to
$$
f(x_0,x')+[\varepsilon d(x',\overline{x}),\varepsilon d(x',\overline{x})]\preccurlyeq f(x_0,\overline{x})+f(\overline{x},x')+[\varepsilon d(x',\overline{x}),\varepsilon d(x',\overline{x})]\preccurlyeq f(x_0,\overline{x}).
$$
By Remark \ref{re2.3}, we can see that $f(x_0,x')+[\varepsilon d(x',\overline{x}),\varepsilon d(x',\overline{x})]\preccurlyeq f(x_0,\overline{x})$, which contradicts (\ref{eq3.6}).
\end{proof}
\begin{remark}\label{re3.6}
We would like to mention that the triangle inequality property of $f$ implies that $[0,0]\preccurlyeq f(x,x)$ for all $x\in X$. In fact, letting $x=y=z$, we have
$f(x,x)\preccurlyeq f(x,x)+f(x,x)$. This shows that $0\leq \underline{f}(x,x)$ and $0\leq \overline{f}(x,x)$, and so $[0,0]\preccurlyeq f(x,x)$.
\end{remark}

\begin{remark}\label{re3.7}
The part (b) of Theorem \ref{th3.4} implies that $f(\overline{x},x)+[\varepsilon d(x,\overline{x}),\varepsilon d(x,\overline{x})]\not\prec [0,0]$ for all $x\in X$. In fact, by Remark \ref{re2.4}, we only need to prove that $f(\overline{x},x)\not\prec [0,0]$ holds when $x=\overline{x}$. If not, then $f(\overline{x},\overline{x})\prec [0,0]$, which contradicts the fact that $[0,0]\preccurlyeq f(\overline{x},\overline{x})$ due to Remark \ref{re3.6}.
\end{remark}

Now we give the following example to illustrate Theorem \ref{th3.4}.
\begin{example}
Let $X=\mathbb{R}$ and $f: X\times X\to \mathcal{I}$ be an interval-valued bifunction defined by
$$f(x,y)=[|x-y|,|x-y|+1],\;\forall x,y\in\mathbb{R}.$$
Then it is easy to see that all the conditions of Theorem \ref{th3.4} are satisfied. Moreover, we can check that the parts (a) and (b) of Theorem \ref{th3.4} are true.
\end{example}
\section{Applications}
\label{sec4}
In this section, we apply the obtained results in the previous section to fixed point theorems, to interval-valued optimization problems, to the interval-valued mountain pass theorem, to noncooperative interval-valued games, and to interval-valued optimal control problems governed by interval-valued differential equations.
\subsection{Fixed point theorems}
The following result gives Carist's fixed point theorem for set-valued mappings.
\begin{theorem}\label{th4.1}
Let $(X,d)$ be a complete metric space, $f:X\to \mathcal{I}$ be an interval-valued function with $f(x)=[\underline{f}(x),\overline{f}(x)]$, and $T: X\rightrightarrows X$ be a set-valued mapping. Suppose that
\begin{itemize}
\item[(i)] $f$ is $\preccurlyeq$-lower semicontinuous and $\preccurlyeq$-lower bounded;
\item[(ii)] for any $x\in X$ and $y\in T(x)$, $f(y)+[d(x,y),d(x,y)]\preccurlyeq f(x)$.
\end{itemize}
Then $T$ has a fixed point, i.e., there exists $\overline{x}\in X$ such that $\overline{x}\in T(\overline{x})$.
\end{theorem}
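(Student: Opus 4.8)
The plan is to derive Caristi's theorem as an immediate consequence of the interval-valued Ekeland variational principle in Corollary \ref{co3.1}. First I would invoke Corollary \ref{co3.1} with the particular choice $\varepsilon=1$: since $f$ is $\preccurlyeq$-lower semicontinuous and $\preccurlyeq$-lower bounded by hypothesis (i), the corollary produces a point $\overline{x}\in X$ for which
$$
\forall x\neq\overline{x},\quad f(x)+[d(x,\overline{x}),d(x,\overline{x})]\not\preccurlyeq f(\overline{x}).
$$
This $\overline{x}$ is the candidate fixed point; only the strict-minimality conclusion (b) of the corollary is needed, and the choice $\varepsilon=1$ is precisely what makes the weight in the corollary's inequality match the weight $[d(x,y),d(x,y)]$ appearing in hypothesis (ii).

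Next I would argue by contradiction, supposing $\overline{x}\notin T(\overline{x})$. Choosing any $y\in T(\overline{x})$, we then have $y\neq\overline{x}$. Applying hypothesis (ii) of the theorem with $x=\overline{x}$ gives
$$
f(y)+[d(\overline{x},y),d(\overline{x},y)]\preccurlyeq f(\overline{x}).
$$
Since $d$ is symmetric, $d(\overline{x},y)=d(y,\overline{x})$, so this reads exactly as $f(y)+[d(y,\overline{x}),d(y,\overline{x})]\preccurlyeq f(\overline{x})$, which contradicts the variational inequality obtained from Corollary \ref{co3.1} because $y\neq\overline{x}$. Hence the assumption fails and $\overline{x}\in T(\overline{x})$, exhibiting the required fixed point.

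The argument is short, so there is no single hard computational step; the care needed is entirely in bookkeeping. The main points to get right are the alignment of conventions (the parameter $\varepsilon$ in Corollary \ref{co3.1} multiplies the distance, so $\varepsilon=1$ must be used to produce the weight $d(x,y)$ of hypothesis (ii)) and the use of symmetry of the metric to identify $d(\overline{x},y)$ with $d(y,\overline{x})$. I would also make explicit the structural assumption that $T$ has nonempty values, since a point $y\in T(\overline{x})$ must be selected to launch the contradiction; this nonemptiness is implicit in the fixed-point conclusion, as an empty $T(\overline{x})$ would make hypothesis (ii) vacuous at $\overline{x}$ and prevent any contradiction from being derived.
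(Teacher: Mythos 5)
Your proposal is correct and follows essentially the same route as the paper: apply Corollary \ref{co3.1} with $\varepsilon=1$ to obtain $\overline{x}$, then derive a contradiction from hypothesis (ii) if $\overline{x}\notin T(\overline{x})$. Your added remark that $T(\overline{x})$ must be nonempty for the contradiction to launch is a sensible bookkeeping point that the paper leaves implicit.
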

\begin{proof}
Applying Corollary \ref{co3.1}, letting $\varepsilon=1$, there exists $\overline{x}\in X$ such that
\begin{equation}\label{eq4.5}
\forall x\neq \overline{x},\quad f(x)+[d(x,\overline{x}), d(x,\overline{x})]\not\preccurlyeq f(\overline{x}).
\end{equation}
If $\overline{x}$ is a fixed point of $T$, then the proof is finished. Suppose to the contrary that $\overline{x}$ is not a fixed point of $T$, that is, $\overline{x}\not\in T(\overline{x})$. Then for any $y\in T(\overline{x})$ with $y\neq \overline{x}$, it follows from condition (ii) that
$f(y)+[d(\overline{x},y),d(\overline{x},y)]\preccurlyeq f(\overline{x})$, which contradicts (\ref{eq4.5}).
\end{proof}

By Theorem \ref{th4.1}, it is easy to obtain the following new fixed point theorem for an interval-valued function.
\begin{theorem}\label{th4.2}
Let $f:\mathbb{R}\to \mathcal{I}$ be an interval-valued function with $f(x)=[\underline{f}(x),\overline{f}(x)]$. Suppose that
\begin{itemize}
\item[(i)] $f$ is $\preccurlyeq$-lower semicontinuous and $\preccurlyeq$-lower bounded;
\item[(ii)] for any $x\in\mathbb{R}$ and $y\in f(x)$, $f(y)+[|x-y|,|x-y|]\preccurlyeq f(x)$.
\end{itemize}
Then $f$ has a fixed point.
\end{theorem}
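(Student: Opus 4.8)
The plan is to reduce Theorem \ref{th4.2} to the set-valued Caristi theorem, Theorem \ref{th4.1}, by reading the interval-valued function $f$ itself as a set-valued self-map of $\mathbb{R}$. Concretely, I would take $(X,d)=(\mathbb{R},|\cdot-\cdot|)$, which is a complete metric space, and define $T:\mathbb{R}\rightrightarrows\mathbb{R}$ by $T(x):=f(x)=[\underline{f}(x),\overline{f}(x)]$, now interpreted as the nonempty, closed, bounded subset of $\mathbb{R}$ that this interval denotes. Under this identification a fixed point of $T$ in the sense $\overline{x}\in T(\overline{x})$ is exactly a point with $\overline{x}\in f(\overline{x})$, i.e. a fixed point of $f$ in the desired sense, so the whole matter is to check that the hypotheses of Theorem \ref{th4.1} are met.

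Next I would verify the two hypotheses of Theorem \ref{th4.1}, which hold essentially verbatim. Condition (i) there, namely $\preccurlyeq$-lower semicontinuity and $\preccurlyeq$-lower boundedness of the interval-valued function $f$, is literally hypothesis (i) of Theorem \ref{th4.2}, so nothing is required. For condition (ii), recall that in $X=\mathbb{R}$ the metric is $d(x,y)=|x-y|$; hence the requirement of Theorem \ref{th4.1}, that $f(y)+[d(x,y),d(x,y)]\preccurlyeq f(x)$ for every $x\in X$ and every $y\in T(x)$, becomes $f(y)+[|x-y|,|x-y|]\preccurlyeq f(x)$ for every $x\in\mathbb{R}$ and every $y\in f(x)$, which is precisely hypothesis (ii) of Theorem \ref{th4.2}.

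With both hypotheses confirmed, Theorem \ref{th4.1} yields $\overline{x}\in\mathbb{R}$ with $\overline{x}\in T(\overline{x})=f(\overline{x})$, finishing the argument. The only point that needs a moment's care, and the nearest thing here to an obstacle, is the dual role played by $f(x)$: it must simultaneously be the interval-valued objective driving the $\preccurlyeq$-descent inequality and the set of admissible images of the set-valued map $T$. This is harmless because every value $f(x)\in\mathcal{I}$ is by definition a nonempty closed interval, so $T$ is a well-defined set-valued self-map of $\mathbb{R}$ with nonempty closed values, which is exactly the structure that the proof of Theorem \ref{th4.1} uses when it selects a point $y\in T(\overline{x})$ to derive its contradiction.
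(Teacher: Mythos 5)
Your reduction is exactly the paper's intended argument: the paper states Theorem \ref{th4.2} as an immediate consequence of Theorem \ref{th4.1} obtained by viewing $f$ itself as the set-valued map $T$ on the complete metric space $\mathbb{R}$, and your verification of the two hypotheses is correct. No gaps.
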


\subsection{Interval-valued optimization problems}
In this subsection, we first give Takahashi's minimization theorem for an interval-valued function. Then we introduce the Palais-Smale condition for interval-valued functions to obtain the existence of minimal solutions for interval-valued optimization problems.

\begin{theorem}\label{th4.3}
Let $(X,d)$ be a complete metric space and $f:X\to \mathcal{I}$ be an interval-valued function with $f(x)=[\underline{f}(x),\overline{f}(x)]$. Suppose that
\begin{itemize}
\item[(i)] $f$ is $\preccurlyeq$-lower semicontinuous and $\preccurlyeq$-lower bounded;
\item[(ii)] for any $x\not\in\textrm{Min}(f,X)$, there is $y\neq x$ such that $f(y)+[d(x,y),d(x,y)]\preccurlyeq f(x)$.
\end{itemize}
Then $\textrm{Min}(f,X)\neq\emptyset$.
\end{theorem}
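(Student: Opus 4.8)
The plan is to apply the interval-valued Ekeland variational principle in the form of Corollary \ref{co3.1} and then extract the conclusion from the contrapositive of hypothesis (ii). Since $f$ is $\preccurlyeq$-lower semicontinuous and $\preccurlyeq$-lower bounded, Corollary \ref{co3.1} applies directly. Choosing the parameter $\varepsilon=1$, I obtain a point $\overline{x}\in X$ such that
$$
\forall x\neq\overline{x},\quad f(x)+[d(x,\overline{x}),d(x,\overline{x})]\not\preccurlyeq f(\overline{x}).
$$
The value $\varepsilon=1$ is chosen precisely so that the weight $\varepsilon d(x,\overline{x})$ produced by the variational principle coincides with the unit weight $d(x,y)$ appearing in hypothesis (ii); this matching is what makes the two conditions comparable.

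Next I would claim that this very point $\overline{x}$ already lies in $\textrm{Min}(f,X)$, which immediately yields $\textrm{Min}(f,X)\neq\emptyset$. I argue by contradiction: suppose $\overline{x}\notin\textrm{Min}(f,X)$. Then hypothesis (ii), applied at $x=\overline{x}$, furnishes some $y\neq\overline{x}$ with
$$
f(y)+[d(\overline{x},y),d(\overline{x},y)]\preccurlyeq f(\overline{x}).
$$
On the other hand, taking $x=y$ in the conclusion of Corollary \ref{co3.1} gives $f(y)+[d(y,\overline{x}),d(y,\overline{x})]\not\preccurlyeq f(\overline{x})$, which, by the symmetry $d(y,\overline{x})=d(\overline{x},y)$, directly contradicts the displayed inequality. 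Hence no such $y$ can exist, forcing $\overline{x}\in\textrm{Min}(f,X)$.

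I do not anticipate a genuine obstacle here: the argument mirrors the proof of Theorem \ref{th4.1}, where the same variational point supplied by Corollary \ref{co3.1} with $\varepsilon=1$ is used to rule out the negation of the desired property. The only points requiring a little care are the bookkeeping with the partial order $\preccurlyeq$ and the symmetry $d(x,\overline{x})=d(\overline{x},x)$, which together guarantee that the interval inequality produced by hypothesis (ii) is literally the negation of the assertion of Corollary \ref{co3.1}. Everything else is a formal contradiction, so the proof is short.
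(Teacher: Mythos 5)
Your proposal is correct and follows exactly the paper's own argument: apply Corollary \ref{co3.1} with $\varepsilon=1$ to obtain $\overline{x}$, then note that if $\overline{x}\notin\textrm{Min}(f,X)$, hypothesis (ii) would produce a $y\neq\overline{x}$ directly contradicting the conclusion of the corollary. Nothing is missing.
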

\begin{proof}
According to Corollary \ref{co3.1}, letting $\varepsilon=1$, there exists a point $\overline{x}\in X$ such that
\begin{equation}\label{eq4.8}
\forall x\neq \overline{x},\quad f(x)+[d(x,\overline{x}), d(x,\overline{x})]\not\preccurlyeq f(\overline{x}).
\end{equation}
We claim that $\overline{x}\in\textrm{Min}(f,X)$. Indeed, if $\overline{x}\not\in\textrm{Min}(f,X)$, then by condition (ii), there exists $y\neq\overline{x}$ such that $f(y)+[d(\overline{x},y),d(\overline{x},y)]\preccurlyeq f(\overline{x})$, which contradicts (\ref{eq4.8}). Thus, $\overline{x}\in\textrm{Min}(f,X)$ and so $\textrm{Min}(f,X)\neq\emptyset$.
\end{proof}

Now we introduce the concept of critical point for an interval-valued function.
\begin{definition}\label{de4.1}
Let $X_0$ be a nonempty open subset of a Banach space $X$ and $f: X_0\to \mathcal{I}$ be $gH$-G\^{a}teaux differentiable.  A point $x_0\in X_0$ is said to be a critical point of $f$ if, for any $h\in X$, $0\in f'(x_0)(h)$.
\end{definition}
\begin{remark}\label{re4.1}
If $f$ is a real-valued function, then Definition \ref{de4.1} coincides with the classical concept of critical points (see \cite{Chang}).
\end{remark}

The following example illustrates Definition \ref{de4.1}.
\begin{example}\label{ex4.1}
Let $X=\mathbb{R}$, $X_0=(0,1)$ and $f:X_0\to \mathcal{I}$ be an interval-valued function defined by
$$
f(x)=[-x,x],\quad x\in (0,1).
$$
For $h\in \mathbb{R}$, $t>0$, $x\in (0,1)$ with $x+th\in(0,1)$,  we can see that
\begin{equation}
f'(x)(h)=\lim_{t\rightarrow0_+}\frac{f(x+th)\ominus_{gH}f(x)}{t}=
\begin{cases}
[-h,h] & \text{if $h\geq0,$}\\
[h,-h] &\text{if $h<0.$}\nonumber
\end{cases}
\end{equation}
It follows that $f$ is $gH$-G\^{a}teaux differentiable on $(0,1)$. Moreover, for any $h\in \mathbb{R}$, one has $0\in f'(x)(h)$ for all $x\in (0,1)$. This shows that every point in $(0,1)$ is a critical point of $f$.
\end{example}

The following result gives the first-order necessary optimality condition for minimal solutions of (IOP).
\begin{proposition}\label{p4.1}
Let $X_0$ be a nonempty open subset of a Banach space $X$ and $f: X_0\to \mathcal{I}$ be $gH$-G\^{a}teaux differentiable. If $x_0\in\textrm{Min}(f,X_0)$, then $x_0$ is a critical point of $f$.
\end{proposition}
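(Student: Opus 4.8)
The plan is to argue by contradiction, exploiting the two-sided characterization $0\in f'(x_0)(h)$ of a critical point together with the sign preservation of the $gH$-difference under the strict order $\prec$. Suppose $x_0\in\textrm{Min}(f,X_0)$ but $x_0$ is not a critical point of $f$. By Definition \ref{de4.1}, there is a direction $h\in X$ with $0\notin f'(x_0)(h)$. Writing $f'(x_0)(h)=[\underline{g}(x_0)(h),\overline{g}(x_0)(h)]$, the failure $0\notin f'(x_0)(h)$ forces either $\overline{g}(x_0)(h)<0$ (whence $\underline{g}(x_0)(h)\leq\overline{g}(x_0)(h)<0$) or $\underline{g}(x_0)(h)>0$ (whence $0<\underline{g}(x_0)(h)\leq\overline{g}(x_0)(h)$).

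In the first case one has $f'(x_0)(h)\prec[0,0]$ directly. In the second case, the linearity of $f'(x_0)$ (Definition \ref{de3.1}) gives $f'(x_0)(-h)=-f'(x_0)(h)=[-\overline{g}(x_0)(h),-\underline{g}(x_0)(h)]\prec[0,0]$. Thus, after replacing $h$ by $-h$ if necessary, I may assume without loss of generality that $f'(x_0)(h)\prec[0,0]$, i.e.\ both endpoints of the derivative interval are strictly negative.

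Next I would pass from the derivative back to the difference quotient. Since $f$ is $gH$-G\^{a}teaux differentiable,
$$
\frac{f(x_0+th)\ominus_{gH}f(x_0)}{t}\rightarrow f'(x_0)(h)\quad(t\rightarrow0_+)
$$
in the Hausdorff sense, which by Remark \ref{re2.2} is equivalent to the convergence of both endpoints. As both endpoints of the limit $f'(x_0)(h)$ are strictly negative, there is $\delta>0$ such that for all $t\in(0,\delta)$ both endpoints of the quotient are strictly negative, that is,
$$
\frac{f(x_0+th)\ominus_{gH}f(x_0)}{t}\prec[0,0].
$$
Multiplying by $t>0$ and applying Remark \ref{re2.5}(b), followed by Remark \ref{re2.4}(ii), yields $f(x_0+th)\ominus_{gH}f(x_0)\prec[0,0]$ and hence $f(x_0+th)\prec f(x_0)$.

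Finally, since $X_0$ is open and $x_0\in X_0$, shrinking $\delta$ if needed guarantees $x_0+th\in X_0$ for all $t\in(0,\delta)$. Any such point $x=x_0+th$ then satisfies $f(x)\prec f(x_0)$ with $x\in X_0$, contradicting $x_0\in\textrm{Min}(f,X_0)$ (Definition \ref{de2.7}). This contradiction shows that $0\in f'(x_0)(h)$ for every $h\in X$, so $x_0$ is a critical point of $f$. I expect the only delicate point to be the limit step: one must invoke the endpoint-wise (Hausdorff) convergence so that a strictly negative limit forces strict negativity of the quotients for all small $t$, which is precisely what allows the open-neighborhood argument to manufacture a genuine competitor violating minimality.
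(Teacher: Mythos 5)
Your proof is correct and is essentially the contrapositive of the paper's argument: the paper shows directly that minimality forces $f(x_0+th)\ominus_{gH}f(x_0)\not\prec[0,0]$, passes this through the limit via Proposition \ref{p2.2}, and then uses the linearity of $f'(x_0)$ to dispose of the case where both endpoints of $f'(x_0)(h)$ are nonnegative, whereas you assume $0\notin f'(x_0)(h)$, use the same linearity step to reduce to $f'(x_0)(h)\prec[0,0]$, and propagate the strict negativity back to the difference quotients (via endpoint-wise Hausdorff convergence and the openness of $X_0$) to manufacture a point with $f(x)\prec f(x_0)$. Both arguments rest on exactly the same ingredients, so your proof is sound and matches the paper's in substance.
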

\begin{proof}
Because $x_0\in\textrm{Min}(f,X_0)$, we have $f(x)\not\prec f(x_0)$ for all $x\in X_0$.
In view of Remark \ref{re2.4}, one has $f(x)\ominus_{gH} f(x_0)\not\prec[0,0]$. Let $h\in X$ and $t>0$ be such that $x_0+th\in X_0$. Then it follows from Proposition \ref{p2.2} and the $gH$-G\^{a}teaux differentiability of $f$ that
$$
f'(x_0)(h)=\lim_{t\rightarrow0_+}\frac{f(x_0+th)\ominus_{gH} f(x_0)}{t}\not\prec[0,0].
$$
Assume that $f'(x_0)(h):=[\underline{g}(x_0)(h),\overline{g}(x_0)(h)]$, where $\underline{g}(x_0)(h)$ and $\overline{g}(x_0)(h)$ depend on $f'(x_0)(h)$. Then by the definition of $\not\prec$ and the fact that $\underline{g}(x_0)(h)\leq\overline{g}(x_0)(h)$, we have the following two cases:

Case 1. $\underline{g}(x_0)(h)<0$, $\overline{g}(x_0)(h)\geq0$;

Case 2. $\underline{g}(x_0)(h)\geq0$, $\overline{g}(x_0)(h)\geq0$.

For Case 1, one has $0\in f'(x_0)(h)$. Now we consider Case 2. It follows from the linearity of $f'(x_0)$ that $f'(x_0)(-h)=-f'(x_0)(h)$. Thus, $\underline{g}(x_0)(-h)=-\overline{g}(x_0)(h)$ and $\overline{g}(x_0)(-h)=-\underline{g}(x_0)(h)$. This shows that
$\underline{g}(x_0)(h)=\overline{g}(x_0)(h)=0$. Combining Cases 1 and 2,  we show that $x_0$ is a critical point of $f$.
\end{proof}

In order to introduce the Palais-Smale condition for interval-valued functions,  we use $HC^1(X,\mathcal{I})$ to denote the set of all interval-valued functions $f: X\to \mathcal{I}$  such that $f$ is Hausdorff continuous and $gH$-G\^{a}teaux differentiable,  and for any $h\in X$, $f'(\cdot)(h)$ is Hausdorff continuous.

\begin{definition}\label{de4.2}
Let $f\in HC^1(X,\mathcal{I})$. We say that $f$ satisfies the Palais-Smale condition if any sequence $\{x_n\}$ in $X$ such that
\begin{itemize}
\item[(i)] $f(x_n)$ is $\preccurlyeq$-bounded;
\item[(ii)] for each $h\in X$, $\|h\|_X=1$, $f'(x_n)(h)\rightarrow[0,0]$ or $0\in f'(x_n)(h)$ for $n$ large enough,
\end{itemize}
has a convergent subsequence.
\end{definition}

Clearly, if $f$ is a real-valued function, then Definition \ref{de4.2} coincides with the classical Palais-Smale condition.

The following example illustrates the Palais-Smale condition for interval-valued functions.
\begin{example}\label{ex4.2}
Let $X=\mathbb{R}$ and $f:X\to\mathcal{I}$ be an interval-valued function defined by
$$
f(x)=[-x^2,x^2],\quad\forall x\in X.
$$
Then it is easy to see that $f$ is Hausdorff continuous. For any $x\in \mathbb{R}$, $h\in \mathbb{R}$ with $t>0$, we have
\begin{equation}
f'(x)(h):=\lim_{t\rightarrow0_+}\frac {f(x+th)\ominus_{gH}f(x)}{t}=
\begin{cases}
[-2hx,2hx] & \text{if $hx\geq0,$}\\
[2hx,-2hx] &\text{if $hx<0.$}\nonumber
\end{cases}
\end{equation}
Clearly, $f$ is $gH$-G\^{a}teaux differentiable,  $f'(x)(h)$ is Hausdorff continuous and $0\in f'(x)(h)$ for all $x,h\in \mathbb{R}$. Moreover, if for any sequence $\{x_n\}$ in $\mathbb{R}$ such that $\{f(x_n)\}$ is $\preccurlyeq$-bounded,
 then we can check that $\{x_n\}$ is bounded in $\mathbb{R}$ and so $\{x_n\}$ has a convergent subsequence. Thus, $f$ satisfies the Palais-Smale condition.
\end{example}

Finally, we give the following existence result concerned with minimal solutions for (IOP) under the Palais-Smale condition.
\begin{theorem}\label{th4.4}
Let $f\in HC^1(X,\mathcal{I})$ be $\preccurlyeq$-lower bounded.  If $f$ satisfies the Palais-Smale condition, then
 $\textrm{Min}(f,X)\neq\emptyset$.
\end{theorem}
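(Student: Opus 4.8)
The plan is to invoke Theorem \ref{th3.3} to produce a Palais-Smale sequence and then use the Palais-Smale condition to extract a convergent subsequence whose limit is a minimal solution. The structure mirrors the classical real-valued argument: an approximate minimizing sequence along which the derivative tends to zero, combined with a compactness hypothesis, yields an actual minimizer.

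First I would apply Theorem \ref{th3.3} to $f$. Since $f \in HC^1(X,\mathcal{I})$, $f$ is $gH$-G\^ateaux differentiable and Hausdorff continuous, hence $\preccurlyeq$-lower semicontinuous by Remark \ref{re2.8}; together with the assumed $\preccurlyeq$-lower boundedness, both hypotheses of Theorem \ref{th3.3} hold. This gives a sequence $\{x_n\}$ in $X$ with $f(x_n) \rightarrow [\inf_{x\in X}\underline{f}(x),\inf_{x\in X}\overline{f}(x)]$ and, for every $h \in X$ with $\|h\|_X = 1$, either $f'(x_n)(h) \rightarrow [0,0]$ or $0 \in f'(x_n)(h)$ for $n=1,2,\cdots$. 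The convergence $f(x_n) \rightarrow [\inf \underline{f}, \inf \overline{f}]$ implies in particular that $\{f(x_n)\}$ is $\preccurlyeq$-bounded, so this sequence satisfies precisely conditions (i) and (ii) of Definition \ref{de4.2}.

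Next I would apply the Palais-Smale condition. Because $f$ satisfies this condition and $\{x_n\}$ meets both requirements of Definition \ref{de4.2}, there is a convergent subsequence $\{x_{n_k}\}$ with $x_{n_k} \rightarrow \overline{x}$ for some $\overline{x} \in X$. The final step is to identify $\overline{x}$ as a minimal solution. Since $f$ is Hausdorff continuous, $\underline{f}$ and $\overline{f}$ are continuous by Remark \ref{re2.8}, so $\underline{f}(x_{n_k}) \rightarrow \underline{f}(\overline{x})$ and $\overline{f}(x_{n_k}) \rightarrow \overline{f}(\overline{x})$. Combined with $\underline{f}(x_{n_k}) \rightarrow \inf_{x\in X}\underline{f}(x)$ and $\overline{f}(x_{n_k}) \rightarrow \inf_{x\in X}\overline{f}(x)$, uniqueness of limits forces $\underline{f}(\overline{x}) = \inf_{x\in X}\underline{f}(x)$ and $\overline{f}(\overline{x}) = \inf_{x\in X}\overline{f}(x)$, that is, $f(\overline{x}) = \textrm{Inf}_{x\in X} f(x)$. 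By Remark \ref{re2.11}(i) (or directly Remark \ref{re2.10}), this yields $\overline{x} \in \textrm{Min}(f,X)$, hence $\textrm{Min}(f,X) \neq \emptyset$.

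The only subtle point, and the step I expect to require the most care, is verifying that the sequence produced by Theorem \ref{th3.3} genuinely qualifies as a Palais-Smale sequence in the sense of Definition \ref{de4.2}: the derivative condition in Theorem \ref{th3.3}(b) is stated per fixed direction $h$, and one must confirm it aligns with the ``for each $h$'' phrasing of Definition \ref{de4.2}(ii), while the $\preccurlyeq$-boundedness in Definition \ref{de4.2}(i) follows from Hausdorff convergence $f(x_n) \rightarrow [\inf \underline{f}, \inf \overline{f}]$ (which forces both $\{\underline{f}(x_n)\}$ and $\{\overline{f}(x_n)\}$ to be bounded). Once this matching is confirmed, the remainder is a routine limit-uniqueness argument using continuity.
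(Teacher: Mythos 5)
Your proposal is correct and follows essentially the same route as the paper's own proof: apply Theorem \ref{th3.3} to obtain a sequence satisfying the two conditions of Definition \ref{de4.2}, extract a convergent subsequence via the Palais-Smale condition, and use Hausdorff continuity together with Remark \ref{re2.10} to conclude that the limit is a minimal solution. No gaps.
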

\begin{proof}
Because $f$ is Hausdorff continuous on $X$, by Remark \ref{re2.8}, we apply Theorem \ref{th3.3} to obtain a sequence $\{x_n\}$ in $X$ such that
\begin{equation*}
f(x_n)\rightarrow \left[\inf_{x\in X}\underline f(x),\inf_{x\in X}\overline f(x)\right]
\end{equation*}
and
$$
\forall h\in X, \quad\|h\|_X=1,\quad f'(x_n)(h)\rightarrow[0,0]\; \textrm{or}\; 0\in f'(x_n)(h)~ \textrm{for}~ n=1,2,\cdots.
$$
Then $\underline{f}(x_n)\rightarrow \inf_{x\in X}\underline f(x)$ and $\overline{f}(x_n)\rightarrow \inf_{x\in X}\overline f(x)$. Thus, $\underline{f}(x_n)$ and $\overline{f}(x_n)$ are bounded and so $f(x_n)$ is $\preccurlyeq$-bounded due to Remark \ref{re2.6}.  Applying the Palais-Smale condition, there must exist a subsequence $\{x_{n_k}\}$ of $\{x_n\}$ converging to $x_0\in X$. By the Hausdorff continuity of $f$, one has $f(x_{n_k})\rightarrow f(x_0)$, which implies that $\underline{f}(x_0)=\inf_{x\in X}\underline f(x)$ and $\overline{f}(x_0)=\inf_{x\in X}\overline f(x)$. It follows from Remark \ref{re2.10} that $x_0\in \textrm{Min}(f,X)$ and so $\textrm{Min}(f,X)\neq\emptyset$.
\end{proof}

\begin{remark}\label{re4.2}
From Proposition \ref{p4.1}, we can see that Theorem \ref{th4.4} implies that there exists a critical point of $f$.
\end{remark}

\subsection{The Mountain Pass Theorem for interval-valued functions}
In this subsection, let $C([0,1],X)$ denote the set of all continuous functions from $[0,1]$ to $X$, where $X$ is a Banach space.
\begin{definition}\label{de4.3}
Let $f\in HC^1(X,\mathcal{I})$ and $C\in\mathcal{I}$. We say that $f$ satisfies the $(\textrm{Palais-Smale})_C$ condition if any sequence $\{x_n\}$ in $X$ such that
\begin{itemize}
\item[(i)] $f(x_n)\rightarrow C$;
\item[(ii)] for each $h\in X$, $\|h\|_X=1$, $f'(x_n)(h)\rightarrow[0,0]$  or $0\in f'(x_n)(h)$ for $n$ large enough,
\end{itemize}
has a convergent subsequence.
\end{definition}
\begin{remark}\label{re4.3}
By Remark \ref{re2.6}, the Palais-Smale condition implies the $(\textrm{Palais-Smale})_C$ condition.
\end{remark}

The following result establishes an interval-valued version of the Mountain Pass Theorem.
\begin{theorem}\label{th4.5}
Let $f\in HC^1(X,\mathcal{I})$ and $\Omega$ be an open subset of $X$. For any given $p_0\in\Omega$ and $p_1\not\in cl\Omega $, where $cl\Omega$ denotes the closure of $\Omega$, set $\Gamma=\{l\in C([0,1],X):l(0)=p_0,\;l(1)=p_1\}$. Assume that
\begin{itemize}
\item[(i)] $f(p_0)\prec\alpha$ and $f(p_1)\prec\alpha$, where $\alpha=[\inf_{x\in\partial\Omega}\underline{f}(x),\inf_{x\in\partial\Omega}\overline{f}(x)]$ and $\partial\Omega$ is the boundary of $\Omega$;
\item[(ii)] $C:=[\inf_{l\in\Gamma}\max_{t\in[0,1]}\underline{f}(l(t)),\inf_{l\in\Gamma}\max_{t\in[0,1]}\overline{f}(l(t))]$ and $f$ satisfies the $(\textrm{Palais-Smale})_C$ condition.
\end{itemize}
Then $C$ is the critical value of $f$, i.e., there exists a point $x_0\in X$ such that $x_0$ is a critical point of $f$ and $f(x_0)=C$.
\end{theorem}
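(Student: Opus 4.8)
The plan is to run the interval-valued Ekeland principle of Corollary \ref{co3.1} on the space of paths and then convert the resulting near-minimizing path into an approximate critical point of $f$ at level $C$. First I would equip $\Gamma$ with the metric $d_\Gamma(l,l')=\sup_{t\in[0,1]}\|l(t)-l'(t)\|_X$; since $X$ is a Banach space, $C([0,1],X)$ is complete, and $\Gamma$, being cut out by the closed endpoint constraints $l(0)=p_0$, $l(1)=p_1$, is a closed subset, so $(\Gamma,d_\Gamma)$ is complete. Define the interval-valued functional $\Phi:\Gamma\to\mathcal I$ by $\Phi(l)=[\underline\Phi(l),\overline\Phi(l)]$ with $\underline\Phi(l)=\max_{t}\underline f(l(t))$ and $\overline\Phi(l)=\max_t\overline f(l(t))$; this is a genuine interval because $\underline f\le\overline f$ pointwise. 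Using the Hausdorff continuity of $f$ (Remark \ref{re2.8}) and the compactness of $[0,1]$, one checks that $\underline\Phi,\overline\Phi$ are continuous on $(\Gamma,d_\Gamma)$, so $\Phi$ is $\preccurlyeq$-lower semicontinuous; and by the very definition of $C$ we have $\textrm{Inf}_{l\in\Gamma}\Phi(l)=C$, so $\Phi$ is $\preccurlyeq$-lower bounded.

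Next I would record the mountain-pass geometry. Because $p_0\in\Omega$ and $p_1\notin cl\Omega$, every $l\in\Gamma$ must meet $\partial\Omega$, whence $\underline\Phi(l)\ge\inf_{\partial\Omega}\underline f$ and $\overline\Phi(l)\ge\inf_{\partial\Omega}\overline f$; thus $\alpha\preccurlyeq C$, while hypothesis (i) gives $f(p_0)\prec\alpha$ and $f(p_1)\prec\alpha$. Consequently the endpoints sit strictly below level $C$, so for small $\delta>0$ the near-peak times $\{t:\underline f(l(t))\ge\underline C-\delta\ \text{or}\ \overline f(l(t))\ge\overline C-\delta\}$ stay bounded away from $t=0,1$. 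Now fix $\varepsilon_n\downarrow0$ and apply Corollary \ref{co3.1} to $\Phi$: there exist paths $l_n\in\Gamma$ with $\Phi(l_n)\preccurlyeq C+[\varepsilon_n,\varepsilon_n]$ and, for every $l\ne l_n$, $\Phi(l)+[\varepsilon_n d_\Gamma(l,l_n),\varepsilon_n d_\Gamma(l,l_n)]\not\preccurlyeq\Phi(l_n)$.

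The crux is to produce, on each $l_n$, a point $x_n:=l_n(t_n)$ with $f(x_n)\to C$ and $f'(x_n)(h)\not\prec[-\varepsilon_n,-\varepsilon_n]$ for all $\|h\|_X=1$. I would argue by contradiction: if no such point exists, then at every near-peak time $t$ there is a unit $h_t$ with $f'(l_n(t))(h_t)\prec[-\varepsilon_n,-\varepsilon_n]$. Writing $f'(x)(h)=[\min\{\underline f'(x;h),\overline f'(x;h)\},\max\{\underline f'(x;h),\overline f'(x;h)\}]$ in terms of the directional derivatives of $\underline f$ and $\overline f$ (which follows from the $gH$-difference formula in Remark \ref{re2.1}), this is exactly the statement that $h_t$ is a common strict descent direction, $\underline f'(l_n(t);h_t)<-\varepsilon_n$ and $\overline f'(l_n(t);h_t)<-\varepsilon_n$. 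Using the continuity of $f'(\cdot)(\cdot)$ together with a partition of unity, I would assemble these $h_t$ into a locally Lipschitz pseudo-gradient field $W$ supported near the near-peak times and deform $l_n$ along $-W$; since the endpoints lie below level $C$ the cutoff forces $W$ to vanish at $t=0,1$, so the deformed path $l'$ stays in $\Gamma$. For a small step this lowers both $\underline\Phi$ and $\overline\Phi$ by strictly more than $\varepsilon_n\,d_\Gamma(l',l_n)$, i.e. $\Phi(l')+[\varepsilon_n d_\Gamma(l',l_n),\varepsilon_n d_\Gamma(l',l_n)]\preccurlyeq\Phi(l_n)$, contradicting the Ekeland inequality. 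The genuine difficulty here, absent in the scalar case, is building a single continuous descent field that simultaneously controls the two maxima $\underline\Phi$ and $\overline\Phi$, whose peaks need not occur at the same times; the common-descent reading of $f'(l_n(t))(h_t)\prec[-\varepsilon_n,-\varepsilon_n]$ is precisely what reconciles the two components.

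Finally, the sequence $\{x_n\}$ satisfies $f(x_n)\to C$ and $f'(x_n)(h)\not\prec[-\varepsilon_n,-\varepsilon_n]$ for all unit $h$, so the dichotomy argument from the proof of Theorem \ref{th3.3} upgrades this to: for each $h$ with $\|h\|_X=1$, $f'(x_n)(h)\to[0,0]$ or $0\in f'(x_n)(h)$ for $n$ large. Hence $\{x_n\}$ fulfils both requirements of the $(\textrm{Palais-Smale})_C$ condition (Definition \ref{de4.3}), which yields a subsequence $x_{n_k}\to x_0$. Hausdorff continuity of $f$ gives $f(x_0)=C$, and passing to the limit in the dichotomy, using that $f'(\cdot)(h)$ is Hausdorff continuous, gives $0\in f'(x_0)(h)$ for every $h$, so $x_0$ is a critical point of $f$ in the sense of Definition \ref{de4.1} (exactly as in Proposition \ref{p4.1}). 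Therefore $C$ is a critical value of $f$, as claimed.
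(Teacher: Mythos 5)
Your proposal follows essentially the same route as the paper: Ekeland's principle (Corollary \ref{co3.1}) applied to the max-functional $\Phi$ on the complete path space $\Gamma$, the mountain-pass geometry forcing the peak set of each path into $(0,1)$, a partition-of-unity construction of a common descent field to reach a contradiction with the Ekeland inequality, and then the dichotomy of Theorem \ref{th3.3} followed by the $(\textrm{Palais-Smale})_C$ condition; the only organizational difference is that you deform the path and contradict the Ekeland inequality directly, whereas the paper first extracts the directional inequality $f'(l_n(\eta_n))(h(\eta_n))\not\prec[-\frac{1}{n},-\frac{1}{n}]$ from perturbations $l_n+t_jh$ and then contradicts that. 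One caveat: the identity $f'(x)(h)=[\min\{\underline f'(x;h),\overline f'(x;h)\},\max\{\underline f'(x;h),\overline f'(x;h)\}]$ does \emph{not} follow from Remark \ref{re2.1} alone (the endpoint functions need not be directionally differentiable; cf.\ the paper's reference [QD]), but the consequence you actually use --- that $f'(l_n(t))(h_t)\prec[-\varepsilon_n,-\varepsilon_n]$ forces both $\underline f$ and $\overline f$ to decrease at rate exceeding $\varepsilon_n$ along $h_t$ for small steps --- can be read off directly from the $gH$-difference quotient, so this is a repairable inaccuracy rather than a gap.
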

\begin{proof}
Let us define a distance $d$ on $\Gamma$ as follows:
$$
d(l_1,l_2)=\max_{t\in[0,1]}\|l_1(t)-l_2(t)\|_X, \quad \forall l_1,l_2\in \Gamma.
$$
Clearly, $(\Gamma,d)$ is a complete metric space. Now we can define an interval-valued function $\Phi:\Gamma\to\mathcal{I}$ as follows:
$$
\Phi(l)=\left[\max_{t\in[0,1]}\underline{f}(l(t)),\max_{t\in[0,1]}\overline{f}(l(t))\right], \quad \forall l\in\Gamma.
$$
By Remark \ref{re2.8}, we know that $\underline{f}$ and $\overline{f}$ are continuous on $X$. Thus, $\max_{t\in[0,1]}\underline{f}(l(t))$ and $\max_{t\in[0,1]}\overline{f}(l(t))$ exist. It follows that $\max_{t\in[0,1]}\underline{f}(l(t))\leq\max_{t\in[0,1]}\overline{f}(l(t))$ and so $\Phi$ is well-defined. Set $\Phi(l)=[\underline{\Phi}(l),\overline{\Phi}(l)]$, where $\underline{\Phi}(l)=\max_{t\in[0,1]}\underline{f}(l(t))$ and
$\overline{\Phi}(l)=\max_{t\in[0,1]}\overline{f}(l(t))$. By the hypothesis, one has $\underline\Phi(l)\geq\inf_{x\in\partial\Omega}\underline{f}(x)$
and $\overline\Phi(l)\geq\inf_{x\in\partial\Omega}\overline{f}(x)$, and so $\alpha\preccurlyeq \Phi(l)$ for all $l\in\Gamma$.

Now we prove that $\Phi$ is $\preccurlyeq$-lower semicontinuous on $\Gamma$. Invoking Proposition \ref{p2.3}, we only need to show that $\underline{\Phi}$ and $\overline{\Phi}$ are lower semicontinuous on $\Gamma$. For any $r\in \mathbb{R}$, let $\{s_n\}$ be a sequence in $\{l\in\Gamma:\underline{\Phi}(l)\leq r\}$ converging to $s_0$. Then $\underline{\Phi}(s_n)\leq r$ and so $\underline{f}(s_n(t))\leq r$ for all $t\in[0,1]$. Because $\underline{f}$ is continuous, we have $\underline{f}(s_0(t))\leq r$ for all $t\in[0,1]$.  Thus, $\underline{\Phi}(s_0)\leq r$ and so $\underline{\Phi}$ is lower semicontinuous on $\Gamma$. Similarly, we can prove that  $\overline{\Phi}$ is lower semicontinuous on $\Gamma$. Hence $\Phi$ satisfies all the conditions of Corollary \ref{co3.1}, and it follows from Remark \ref{re3.2} that there exists $\{l_n\}$ in $\Gamma$ such that
\begin{equation}\label{eq4.9}
\left[\inf_{l\in\Gamma}\underline{\Phi}(l),\inf_{l\in\Gamma}\overline{\Phi}(l)\right]\preccurlyeq \Phi(l_n)\preccurlyeq \left[\inf_{l\in\Gamma}\underline{\Phi}(l)+\frac{1}{n},\inf_{l\in\Gamma}\overline{\Phi}(l)+\frac{1}{n}\right]
\end{equation}
and
\begin{equation}\label{eq4.10}
\forall l\in\Gamma,\quad\Phi(l)+\left[\frac{1}{n}d(l,l_n),\frac{1}{n}d(l,l_n)\right]\not\prec\Phi(l_n).
\end{equation}

Next we claim that $M(l):=\{t\in[0,1]:f(l(t))=\Phi(l)\}\neq\emptyset$ for all $l\in\Gamma$. In fact, by $f\in HC^1(X,\mathcal{I})$ and $l\in C([0,1],X)$, we know that $\textrm{Sup}_{x\in[0,1]}f(l(t))$ exists and so there exists a sequence $\{t_n\}$ in $[0,1]$ such that $f(l(t_n))\rightarrow\textrm{Sup}_{x\in[0,1]}f(l(t))$. Moreover, there exits a subsequence $\{t_{n_k}\}$ of $\{t_n\}$ such that $t_{n_k}\rightarrow t_0\in[0,1]$. Thus, we have $f(l(t_0))=\textrm{Sup}_{x\in[0,1]}f(l(t))$ due to the Hausdorff continuity of $f$ and continuity of $l$. This shows that $M(l)\neq\emptyset$.
Obviously, for each $l\in\Gamma$, $M(l)$ is closed and so it is compact. We show that $M(l)\subset(0,1)$. Indeed, let $t_0\in M(l)\cap\{0,1\}$. Then  $\alpha\preccurlyeq\Phi(l)=f(l(t_0))$. On the other hand, by condition (i), we obtain $f(l(t_0))\prec\alpha$, which is a contradiction.

Let $\Gamma_0=\{\phi\in C([0,1],X):\phi(0)=\theta,\phi(1)=\theta\}$, where $\theta$ is the null element in $C([0,1],X)$. Clearly, $\Gamma_0$ is a closed subspace of $ C([0,1],X)$. Let $\|\cdot\|_{\Gamma_0}$ be a norm of $\Gamma_0$ defined by
$$\|\phi\|_{\Gamma_0}=\max_{t\in[0,1]}\|\phi(t)\|_X, \quad \forall \phi\in\Gamma_0.$$
Then, for each $h\in\Gamma_0$ with $\|h\|_{\Gamma_0}=1$, $t_j>0$ with $t_j\rightarrow 0$, and $\xi_j\in M(l_n+t_jh)$, $j=1,2,\cdots$, it follows from (\ref{eq4.10}) and Remark \ref{re2.5} that
\begin{equation}\label{eq4.11}
\frac{1}{t_j}(f(l_n(\xi_j)+t_jh(\xi_j))\ominus_{gH} f(l_n(\xi_j)))\not\prec\left[-\frac{1}{n},-\frac{1}{n}\right].
\end{equation}
Thanks to $\xi_j\in(0,1)$, without loss of generality, we can assume that the sequence $\{\xi_j\}$ converges to $\eta_n\in[0,1]$, which depends on $\l_n$, $t_j$ and $h$. By Proposition \ref{p2.2} and $f\in HC^1(X,\mathcal{I})$, taking the limit as $j\rightarrow+\infty$ in (\ref{eq4.11}), one has
\begin{equation}\label{eq4.12}
f'(l_n(\eta_n))(h(\eta_n))\not\prec\left[-\frac{1}{n},-\frac{1}{n}\right].
\end{equation}
Now we claim that there exists $\eta_n^*\in M(l_n)$ such that, for any $\eta\in X$ with $\|\eta\|_X=1$,
\begin{equation}\label{eq4.13}
f'(l_n(\eta_n^*))(\eta)\not\prec\left[-\frac{1}{n},-\frac{1}{n}\right].
\end{equation}
If not, then for each $\eta\in M(l_n)$, there exists $y_\eta\in X$ with $\|y_\eta\|_X=1$ such that
$$
f'(l_n(\eta))(y_\eta)\prec\left[-\frac{1}{n},-\frac{1}{n}\right].
$$
Thus, by the Hausdorff continuity of $f'(\cdot)(y_\eta)$ and the definition of $\prec$, there exists a neighborhood $U_\eta$ of $\eta$ in $(0,1)$ such that, for any $\xi\in U_\eta$, one has $f'(l_n(\xi))(y_\eta)\prec[-\frac{1}{n},-\frac{1}{n}]$. Because $M(l_n)$ is compact, there are finite neighborhoods $\{U_{\eta_i}\}_{i=1}^m$ such that $M(l_n)\subseteq\cup_{i=1}^mU_{\eta_i}$. Thus, we can obtain $\{y_{\eta_i}\}_{i=1}^m$ with $\|y_{\eta_i}\|_X=1$ satisfying
$$
f'(l_n(\xi))(y_{\eta_i})\prec\left[-\frac{1}{n},-\frac{1}{n}\right],\quad\forall \xi\in U_{\eta_i}.
$$
Consider a partition of unity associated with $\{U_{\eta_i}\}_{i=1}^m$ such that $\rho_i:[0,1]\to[0,1]$ is continuous, $\textrm{supp}(\rho_i):=cl\{\xi\in [0,1]:\rho_i(\xi)\neq0\}\subseteq U_{\eta_i}$ for $i=1,2,\cdots,m$, and
$\Sigma_{i=1}^m\rho_i(\xi)\equiv1$  for all $\xi\in M(l_n)$.
Let $y(\xi)=\sum_{i=1}^m\rho_i(\xi)y_{\eta_i}$. Then $y$ is continuous on $[0,1]$ with $y(0)=\theta$ and $y(1)=\theta$. Thus, $y\in \Gamma_0$ with $\|y\|_{\Gamma_0}\leq1$. On the other hand,  we can choose finite neighborhoods $\{U_{\eta_i}\}_{i=1}^m$ such that, for each $\xi^*\in M(l_n)$, there is only one $i_0\in\{1,2,\cdots,m\}$ satisfying $\xi^*\in U_{\eta_{i_0}}$.
Therefore, we have $\|y\|_{\Gamma_0}=1$ and $f'(l_n(\xi))(y(\xi))\prec[-\frac{1}{n},-\frac{1}{n}]$ for all $\xi\in M(l_n)$, which contradict (\ref{eq4.12}). Thus, we conclude that (\ref{eq4.13}) holds.

Finally, by setting $x_n=l_n(\eta_n^*)$ in (\ref{eq4.13}), we have $f'(x_n)(\eta)\not\prec[-\frac{1}{n},-\frac{1}{n}]$ for all $\eta\in X$ with $\|\eta\|_X=1$. Similar to the proof of Theorem \ref{th3.3}, we can show that $f'(x_n)(\eta)\rightarrow[0,0]$ or $0\in f'(x_n)(\eta)$ for $n=1,2,\cdots$. Going back to (\ref{eq4.9}), we obtain $f(x_n)\rightarrow C$. Consequently, applying the $(\textrm{Palais-Smale})_C$ condition, there exists a subsequence $\{x_{n_k}\}$ of $\{x_n\}$ converging to $x_0\in X$. Thanks to $f\in HC^1(X,\mathcal{I})$, we have $f(x_0)=C$ and $0\in f'(x_0)(\eta)$ for all $\eta\in X$ with $\|\eta\|_X=1$. It follows from the linearity of $f'(x_0)$ that $0\in f'(x_0)(\eta)$ for all $\eta\in X$. This shows that $C$ is a critical value of $f$.
\end{proof}

\begin{remark}\label{re4.4}
If $f$ is a real-valued function, then Theorem \ref{th4.5} reduces to the classical Mountain Pass Theorem obtained by Ambrosetti and Rabinowitz \cite{AR}.
\end{remark}

\subsection{Noncooperative interval-valued games}

In this subsection, we consider an approximate Nash equilibrium of a noncooperative $n$-person game in interval-valued settings.

Let $G=(\mathbf{N},\{X_i\}_{i\in \mathbf{N}},\{f_i\}_{i\in \mathbf{N}})$ be a noncooperative interval-valued game, where
\begin{itemize}
\item[(i)] $\mathbf{N}:=\{1,2,\cdots,n\}$ is the set of $n$ players;
\item[(ii)] for each $i\in\mathbf{N}$, the strategy set of the $i$th player, denoted by $X_i$, is a complete metric space with the metric $d_i$;
\item[(iii)] set $V:=\prod_{i\in\mathbf{N}}X_i$;
\item[(iv)] for each $i\in\mathbf{N}$, $f_i:V\to\mathcal{I}$ is an interval-valued loss function of the $i$th player.
\end{itemize}
Set $V_{-i}:=\prod_{j\in\mathbf{N}\backslash\{i\}}X_j$.  For each $i\in\mathbf{N}$, we define
$$x_{-i}:=(x_1,\cdots,x_{i-1},x_{i+1},\cdots,x_n)\in V_{-i}, \quad \forall x=(x_1,x_2,\cdots,x_n)\in V.$$
Moreover, let $(y_i,x_{-i}):=(x_1,\cdots,x_{i-1},y_i,x_{i+1},\cdots,x_n)\in V$  for each $i\in\mathbf{N}$ and $y_i\in X_i$.

We first introduce an approximate Nash equilibrium of $G$ as follows.
\begin{definition}\label{de4.4}
Given $\varepsilon>0$, a strategy profile $\overline{x}=(\overline{x}_1,\overline{x}_2,\cdots,\overline{x}_n)\in V$ is said to be an $\varepsilon$-Nash equilibrium of $G$ if, for each $i\in\mathbf{N}$, $f_i(y_i,\overline{x}_{-i})+[\varepsilon d_i(\overline{x}_i,y_i),\varepsilon d_i(\overline{x}_i,y_i)]\not\prec f_i(\overline{x}_i,\overline{x}_{-i})$ for all $y_i\in X_i$.
\end{definition}

We then define a metric $\widehat{d}$ on $V=\prod_{i\in\mathbf{N}}X_i$ and an interval-valued bifunction $f:V\times V\to\mathcal{I}$ by setting
$$
\widehat{d}(x,y)=\sum_{i=1}^n d_i(x_i,y_i), \quad f(x,y):=\sum_{i=1}^n (f_i(y_i,x_{-i})\ominus_{gH}f_i(x_i,x_{-i})),
$$
where $x=(x_1,x_2,\cdots,x_n)\in V$ and $y=(y_1,y_2,\cdots,y_n)\in V$. Clearly, $(V,\widehat{d})$ is a complete metric space.

Now we give the following result which characterizes an $\varepsilon$-Nash equilibrium of $G$.
\begin{proposition}\label{p4.2}
For every $\varepsilon>0$, if there is $\overline{x}=(\overline{x}_1,\overline{x}_2,\cdots,\overline{x}_n)\in V$ such that, for any $y\in V$,
$$
f(\overline{x},y)+[\varepsilon \widehat{d}(\overline{x},y),\varepsilon \widehat{d}(\overline{x},y)]\not\prec[0,0],
$$
then $\overline{x}$ is an $\varepsilon$-Nash equilibrium of $G$.
\end{proposition}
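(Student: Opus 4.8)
The plan is to deduce the per-player inequalities of Definition~\ref{de4.4} from the single aggregated inequality in the hypothesis, by testing that inequality against one well-chosen profile for each player. First I would fix a player $i\in\mathbf{N}$ and an arbitrary strategy $y_i\in X_i$, and apply the assumed condition to the unilateral-deviation profile $y=(y_i,\overline{x}_{-i})$, i.e. the profile that coincides with $\overline{x}$ in every coordinate except the $i$th, where it takes the value $y_i$.

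Next I would evaluate the two aggregated quantities at this $y$. Since $y$ differs from $\overline{x}$ only in the $i$th coordinate, the definition of $\widehat{d}$ collapses to $\widehat{d}(\overline{x},y)=d_i(\overline{x}_i,y_i)$. For $f(\overline{x},y)=\sum_{j=1}^n\big(f_j(y_j,\overline{x}_{-j})\ominus_{gH}f_j(\overline{x}_j,\overline{x}_{-j})\big)$, every index $j\neq i$ has $y_j=\overline{x}_j$, so the corresponding summand has the form $A\ominus_{gH}A=[0,0]$ by Remark~\ref{re2.1}(iii); hence only the $i$th summand survives and $f(\overline{x},y)=f_i(y_i,\overline{x}_{-i})\ominus_{gH}f_i(\overline{x}_i,\overline{x}_{-i})$. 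Substituting these into the hypothesis gives
\[
\big(f_i(y_i,\overline{x}_{-i})\ominus_{gH}f_i(\overline{x}_i,\overline{x}_{-i})\big)+[\varepsilon d_i(\overline{x}_i,y_i),\varepsilon d_i(\overline{x}_i,y_i)]\not\prec[0,0].
\]

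The step I expect to be the crux is converting this $gH$-difference statement back into the comparison required by Definition~\ref{de4.4}. Writing $C=f_i(y_i,\overline{x}_{-i})$, $D=f_i(\overline{x}_i,\overline{x}_{-i})$ and the degenerate interval $E=[\varepsilon d_i(\overline{x}_i,y_i),\varepsilon d_i(\overline{x}_i,y_i)]$, I would verify from the explicit endpoint formula of Remark~\ref{re2.1}(i) that shifting by a singleton commutes with the $gH$-difference, namely $(C+E)\ominus_{gH}D=(C\ominus_{gH}D)+E$; this holds precisely because the two endpoints of $E$ coincide, so the same constant is added inside both the $\min$ and the $\max$. Granting this identity, the displayed line reads $(C+E)\ominus_{gH}D\not\prec[0,0]$, and Remark~\ref{re2.4}(ii), in its negated form $A\not\prec B\Leftrightarrow A\ominus_{gH}B\not\prec[0,0]$, yields $C+E\not\prec D$, that is
\[
f_i(y_i,\overline{x}_{-i})+[\varepsilon d_i(\overline{x}_i,y_i),\varepsilon d_i(\overline{x}_i,y_i)]\not\prec f_i(\overline{x}_i,\overline{x}_{-i}).
\]

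Since $i\in\mathbf{N}$ and $y_i\in X_i$ were arbitrary, this is exactly the defining condition of an $\varepsilon$-Nash equilibrium, which completes the argument. Alternatively, to avoid isolating the commuting identity I could argue by contradiction: if some player admitted $C+E\prec D$, then Remark~\ref{re2.4}(ii) would give $(C+E)\ominus_{gH}D\prec[0,0]$, and the singleton-shift identity would turn this into $(C\ominus_{gH}D)+E\prec[0,0]$, contradicting the tested hypothesis. Either route reduces the whole proof to the collapse of $\widehat{d}$ and of the $gH$-difference sum under a single-coordinate deviation, together with the singleton-shift behaviour of $\ominus_{gH}$.
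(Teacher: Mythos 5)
Your argument is correct and follows essentially the same route as the paper: test the hypothesis at the unilateral-deviation profile $(y_i,\overline{x}_{-i})$, observe that the $j\neq i$ summands vanish by Remark~\ref{re2.1}(iii) and that $\widehat{d}$ collapses to $d_i(\overline{x}_i,y_i)$, and then translate the resulting $gH$-difference inequality into the form required by Definition~\ref{de4.4}. The only cosmetic difference is in that last translation, where the paper invokes Remark~\ref{re2.5}(c) while you verify the equivalent singleton-shift identity $(C+E)\ominus_{gH}D=(C\ominus_{gH}D)+E$ directly and combine it with Remark~\ref{re2.4}(ii); both are valid.
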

\begin{proof}
For each $i\in\mathbf{N}$, let $y=(\overline{x}_1,\cdots,\overline{x}_{i-1},y_i,\overline{x}_{i+1},\cdots,\overline{x}_n)$. By the definition of $f$, it follows from Remark \ref{re2.1} that $f(\overline{x},y)=f_i(y_i,\overline{x}_{-i})\ominus_{gH}f_i(\overline{x}_i,\overline{x}_{-i})$. Also, it  follows from the definition of $\widehat{d}$ that $\widehat{d}(\overline{x},y)=d_i(\overline{x}_i,y_i)$.
Thanks to $f(\overline{x},y)+[\varepsilon \widehat{d}(\overline{x},y),\varepsilon \widehat{d}(\overline{x},y)]\not\prec[0,0]$, we have
$$
(f_i(y_i,\overline{x}_{-i})\ominus_{gH}f_i(\overline{x}_i,\overline{x}_{-i}))+[\varepsilon d_i(\overline{x}_i,y_i),\varepsilon d_i(\overline{x}_i,y_i)]\not\prec[0,0].
$$
By Remark \ref{re2.5}, we know that $f_i(y_i,\overline{x}_{-i})+[\varepsilon d_i(\overline{x}_i,y_i),\varepsilon d_i(\overline{x}_i,y_i)]\not\prec f_i(\overline{x}_i,\overline{x}_{-i})$ and so $\overline{x}$ is an $\varepsilon$-Nash equilibrium of $G$.
\end{proof}

In order to obtain the existence of $\varepsilon$-Nash equilibrium of $G$, we first show the following lemmas.
\begin{lemma}\label{le4.1}
Let $(X,d)$ be a complete metric space and $\varphi: X\to \mathcal{I}$ be an interval-valued function. Then for any given $A\in\mathcal{I}$, $\varphi(\cdot)\ominus_{gH}A$ is $\preccurlyeq$-lower semicontinuous providing $\varphi$ is $\preccurlyeq$-lower semicontinuous.
\end{lemma}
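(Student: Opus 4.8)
The plan is to reduce the statement to its two endpoint functions and then invoke Proposition \ref{p2.3} in both directions. Write $\varphi(x)=[\underline{\varphi}(x),\overline{\varphi}(x)]$ and $A=[\underline{a},\overline{a}]$. By Remark \ref{re2.1}(i),
$$\varphi(x)\ominus_{gH}A=\left[\min\{\underline{\varphi}(x)-\underline{a},\overline{\varphi}(x)-\overline{a}\},\ \max\{\underline{\varphi}(x)-\underline{a},\overline{\varphi}(x)-\overline{a}\}\right],$$
so the lower and upper endpoint functions of $\varphi(\cdot)\ominus_{gH}A$ are $g:=\min\{\underline{\varphi}-\underline{a},\overline{\varphi}-\overline{a}\}$ and $h:=\max\{\underline{\varphi}-\underline{a},\overline{\varphi}-\overline{a}\}$. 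By Proposition \ref{p2.3} it suffices to prove that both $g$ and $h$ are lower semicontinuous on $X$; once this is done, Proposition \ref{p2.3} applied in the other direction yields the $\preccurlyeq$-lower semicontinuity of $\varphi(\cdot)\ominus_{gH}A$.

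First I would observe that, since $\varphi$ is $\preccurlyeq$-lower semicontinuous, Proposition \ref{p2.3} shows $\underline{\varphi}$ and $\overline{\varphi}$ are both lower semicontinuous; subtracting the constants $\underline{a},\overline{a}$ preserves this, so $\underline{\varphi}-\underline{a}$ and $\overline{\varphi}-\overline{a}$ are lower semicontinuous as well. The crux is then the elementary stability of lower semicontinuity under pointwise minimum and maximum, which I would establish through the sublevel-set characterization recorded just after Definition \ref{de2.4}: for every $r\in\mathbb{R}$,
$$\{x\in X: g(x)\leq r\}=\{x: \underline{\varphi}(x)-\underline{a}\leq r\}\cup\{x: \overline{\varphi}(x)-\overline{a}\leq r\},$$
$$\{x\in X: h(x)\leq r\}=\{x: \underline{\varphi}(x)-\underline{a}\leq r\}\cap\{x: \overline{\varphi}(x)-\overline{a}\leq r\}.$$
Because $\underline{\varphi}-\underline{a}$ and $\overline{\varphi}-\overline{a}$ are lower semicontinuous, each set on the right is closed; a finite union and a finite intersection of closed sets are again closed, whence both sublevel sets are closed and $g,h$ are lower semicontinuous.

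The only point requiring any care is the minimum function $g$, since the minimum of two lower semicontinuous functions may at first glance appear to risk losing lower semicontinuity; the sublevel-set identity above dispels this, because the relevant sublevel set is a \emph{union} (not an intersection) of closed sets and hence still closed. Having verified that $g$ and $h$ are both lower semicontinuous, I would conclude via Proposition \ref{p2.3} that $\varphi(\cdot)\ominus_{gH}A$ is $\preccurlyeq$-lower semicontinuous, which completes the argument.
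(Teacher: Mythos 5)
Your proposal is correct and follows the same overall decomposition as the paper: both reduce the claim via Proposition \ref{p2.3} to showing that the endpoint functions $\min\{\underline{\varphi}-\underline{a},\overline{\varphi}-\overline{a}\}$ and $\max\{\underline{\varphi}-\underline{a},\overline{\varphi}-\overline{a}\}$ are lower semicontinuous. The difference lies in how the minimum is handled, which is the only delicate point. The paper argues by contradiction with a $\liminf$ inequality and an extraction of nested subsequences along which both components exceed $s$, eventually contradicting $\varphi_1(y_n)\leq s$. You instead invoke the sublevel-set characterization recorded after Definition \ref{de2.4} and observe that $\{g\leq r\}$ is the \emph{union} of the two closed sublevel sets, hence closed; the maximum is the intersection and is handled identically. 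Your route is shorter and arguably more transparent: it isolates the standard fact that a finite minimum (or maximum) of lower semicontinuous functions is lower semicontinuous, and proves it in one line from the closed-sublevel-set criterion, whereas the paper's subsequence argument re-derives this fact sequentially. Both arguments are valid; yours buys brevity and makes clear exactly where finiteness of the min is used (a union of \emph{finitely many} closed sets is closed, while an infinite infimum could fail).
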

\begin{proof}
Let $A=[\underline{a},\overline{a}]$ and $\varphi(x)=[\underline{\varphi}(x),\overline{\varphi}(x)]$ for all $x\in X$. Then
$$
\varphi(x)\ominus_{gH}A=[\min\{\underline{\varphi}(x)-\underline{a},\overline{\varphi}(x)-\overline{a}\},\max\{\underline{\varphi}(x)-\underline{a},
\overline{\varphi}(x)-\overline{a}\}],  \quad \forall x\in X.
$$
Thanks to the $\preccurlyeq$-lower semicontinuity of $\varphi$, it follows from Proposition \ref{p2.3} that $\underline{\varphi}$ and $\overline{\varphi}$ are lower semicontinuous on $X$ and so do $\underline{\varphi}(x)-\underline{a}$ and $\overline{\varphi}(x)-\overline{a}$. Now we define $\varphi_1(x):=\min\{\underline{\varphi}(x)-\underline{a},\overline{\varphi}(x)-\overline{a}\}$
and $\varphi_2(x):=\max\{\underline{\varphi}(x)-\underline{a},\overline{\varphi}(x)-\overline{a}\}$ for all $x\in X$. Again by Proposition \ref{p2.3}, it remains to prove that $\varphi_1$ and $\varphi_2$ are lower semicontinuous on $X$. For any $r\in\mathbb{R}$, let $\{x_n\}$ be a sequence in $\{x\in X:\varphi_2(x)\leq r\}$ converging to $x_0$. Then $\underline{\varphi}(x_n)-\underline{a}\leq r$ and
$\overline{\varphi}(x_n)-\overline{a}\leq r$. We can see that $\underline{\varphi}(x_0)-\underline{a}\leq r$ and
$\overline{\varphi}(x_0)-\overline{a}\leq r$. Thus, $\varphi_2(x_0)\leq r$  and so $\varphi_2$ is lower semicontinuous on $X$.
Next, for any $s\in\mathbb{R}$, let $\{y_n\}$ be a sequence in $\{x\in X:\varphi_1(x)\leq s\}$ converging to $y_0$. If $y_0\not\in\{x\in X:\varphi_1(x)\leq s\}$, then $\varphi_1(y_0)> s$. This implies that $\underline{\varphi}(y_0)-\underline{a}> s$ and
$\overline{\varphi}(y_0)-\overline{a}>s$. Because $\underline{\varphi}(x)-\underline{a}$ and $\overline{\varphi}(x)-\overline{a}$ are lower semicontinuous for all $x\in X$, we have $s<\underline{\varphi}(y_0)-\underline{a}\leq\liminf_{n\rightarrow+\infty}(\underline{\varphi}(y_n)-\underline{a})$ and $s<\overline{\varphi}(y_0)-\overline{a}\leq\liminf_{n\rightarrow+\infty}(\overline{\varphi}(y_n)-\overline{a})$. It follows that there exist a subsequence $\{z_k\}$ of $\{y_n\}$ and an integer $N_1>0$ such that, for each $k>N_1$, $\underline{\varphi}(z_k)-\underline{a}>s$. Moreover, there exist a subsequence $\{z_{n_k}\}$ of $\{z_k\}$ and an integer $N_2>0$ such that, for each $n_k>N_2$, $\overline{\varphi}(z_{n_k})-\overline{a}>s$. Let $N:=\max\{N_1, N_2\}$. Then for any $n_k>N$, one has $\underline{\varphi}(z_{n_k})-\underline{a}>s$ and $\overline{\varphi}(z_{n_k})-\overline{a}>s$, which contradict the fact that $\varphi_1(y_n)\leq s$. Consequently, we have $y_0\in\{x\in X:\varphi_1(x)\leq s\}$ and so $\varphi_1$ is lower semicontinuous on $X$.
\end{proof}

\begin{lemma}\label{le4.2}
Let $(X,d)$ be a complete metric space and $\varphi: X\to \mathcal{I}$ be an interval-valued function. Then for any given $A\in\mathcal{I}$,  $\varphi(\cdot)\ominus_{gH}A$ is $\preccurlyeq$-lower bounded providing $\varphi$ is $\preccurlyeq$-lower bounded.
\end{lemma}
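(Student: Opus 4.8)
The plan is to reduce the claim to the elementary fact that the pointwise minimum and maximum of two lower bounded real-valued functions are again lower bounded, and then to invoke Remark \ref{re2.6} twice (once in each direction). The analytic content is minimal; the only thing to keep track of is that the $gH$-difference merely rearranges the two shifted endpoint functions.

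First I would apply Remark \ref{re2.6}: since $\varphi$ is $\preccurlyeq$-lower bounded, both endpoint functions $\underline{\varphi}$ and $\overline{\varphi}$ are lower bounded, say $\underline{\varphi}(x)\geq m_1$ and $\overline{\varphi}(x)\geq m_2$ for all $x\in X$ and some $m_1,m_2\in\mathbb{R}$. Writing $A=[\underline{a},\overline{a}]$, Remark \ref{re2.1}(i) supplies the explicit form $\varphi(x)\ominus_{gH}A=[\varphi_1(x),\varphi_2(x)]$, where $\varphi_1(x):=\min\{\underline{\varphi}(x)-\underline{a},\overline{\varphi}(x)-\overline{a}\}$ and $\varphi_2(x):=\max\{\underline{\varphi}(x)-\underline{a},\overline{\varphi}(x)-\overline{a}\}$ for all $x\in X$.

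Next I would bound $\varphi_1$ and $\varphi_2$ from below. Because $\underline{\varphi}(x)-\underline{a}\geq m_1-\underline{a}$ and $\overline{\varphi}(x)-\overline{a}\geq m_2-\overline{a}$ for every $x\in X$, both the minimum $\varphi_1(x)$ and the maximum $\varphi_2(x)$ are no smaller than $\min\{m_1-\underline{a},\,m_2-\overline{a}\}$; hence $\varphi_1$ and $\varphi_2$ are lower bounded on $X$. Finally, applying Remark \ref{re2.6} in the reverse direction to the interval-valued function $\varphi(\cdot)\ominus_{gH}A$, whose endpoint functions are precisely $\varphi_1$ and $\varphi_2$, I conclude that $\varphi(\cdot)\ominus_{gH}A$ is $\preccurlyeq$-lower bounded. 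There is no real obstacle here: once the endpoint description from Remark \ref{re2.1} is in hand, the whole argument is a three-line reduction, and unlike Lemma \ref{le4.1} no subsequence extraction is needed since boundedness (rather than semicontinuity) behaves transparently under taking pointwise minima and maxima.
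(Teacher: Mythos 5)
Your proof is correct and follows essentially the same route as the paper's: write out the endpoint functions of $\varphi(\cdot)\ominus_{gH}A$ via Remark \ref{re2.1}, shift the lower bounds of $\underline{\varphi}$ and $\overline{\varphi}$ by $\underline{a}$ and $\overline{a}$, and conclude through Remark \ref{re2.6}. Your version is in fact slightly more explicit than the paper's, since you spell out the common lower bound $\min\{m_1-\underline{a},\,m_2-\overline{a}\}$ for both the pointwise minimum and maximum, a step the paper leaves implicit.
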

\begin{proof}
Let $A=[\underline{a},\overline{a}]$ and $\varphi(x)=[\underline{\varphi}(x),\overline{\varphi}(x)]$ for all $x\in X$. Then
$$
\varphi(x)\ominus_{gH}A=[\min\{\underline{\varphi}(x)-\underline{a},\overline{\varphi}(x)-\overline{a}\},\max\{\underline{\varphi}(x)-\underline{a},
\overline{\varphi}(x)-\overline{a}\}], \quad \forall x\in X.
$$
Because $\varphi$ is $\preccurlyeq$-lower bounded, there exists $M=[\underline{m},\overline{m}]\in\mathcal{I}$ such that $M\preccurlyeq \varphi(x)$ for all $x\in X$. Thus, $\underline{m}\leq \underline{\varphi}(x)$ and $\overline{m}\leq \overline{\varphi}(x)$ for all $x\in X$.
Moreover, we have $\underline{m}-\underline{a}\leq \underline{\varphi}(x)-\underline{a}$ and $\overline{m}-\overline{a}\leq \overline{\varphi}(x)-\overline{a}$ for all $x\in X$. It follows from Remark \ref{re2.6} that $\varphi(\cdot)\ominus_{gH}A$ is $\preccurlyeq$-lower bounded on $X$.
\end{proof}

We end this subsection by giving the following theorem which ensures the existence of approximate Nash equilibria of $G$.
\begin{theorem}\label{th4.6}
Suppose that
\begin{itemize}
\item[(i)] for each $i\in\mathbf{N}$ and $x_{-i}\in V_{-i}$, $f_i(\cdot,x_{-i})$ is $\preccurlyeq$-lower semicontinuous and $\preccurlyeq$-lower bounded on $X_i$;
\item[(ii)] for any $x,y,z\in V$, $f(x,z)\preccurlyeq f(x,y)+f(y,z)$.
\end{itemize}
Then for any given $\varepsilon>0$,  $G$ admits an $\varepsilon$-Nash equilibrium.
\end{theorem}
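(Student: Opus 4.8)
The plan is to realize an $\varepsilon$-Nash equilibrium as the point produced by the bifunction Ekeland principle (Theorem \ref{th3.4}) applied to the interval-valued bifunction $f$ on the complete metric space $(V,\widehat{d})$, and then to translate the resulting variational inequality into the equilibrium condition via Proposition \ref{p4.2}. Thus the whole argument reduces to three moves: checking that $f$ satisfies the two hypotheses of Theorem \ref{th3.4}, invoking that theorem (together with Remark \ref{re3.7}), and finally applying Proposition \ref{p4.2}.

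First I would verify the hypotheses of Theorem \ref{th3.4}. Its condition (ii), the triangle inequality $f(x,z)\preccurlyeq f(x,y)+f(y,z)$, is precisely hypothesis (ii) of the present theorem, so nothing is required there. For its condition (i), fix $x=(x_1,\dots,x_n)\in V$ and examine
$$
f(x,y)=\sum_{i=1}^{n}\bigl(f_i(y_i,x_{-i})\ominus_{gH}f_i(x_i,x_{-i})\bigr),\qquad y\in V.
$$
Each summand depends only on the coordinate $y_i$. Since $\widehat{d}(x,y)\geq d_i(x_i,y_i)$, the coordinate projection $y\mapsto y_i$ is continuous from $(V,\widehat{d})$ to $X_i$, so by the closed-sublevel-set characterization of Definition \ref{de2.5} the map $y\mapsto f_i(y_i,x_{-i})$ is $\preccurlyeq$-lower semicontinuous on $V$, and it is trivially $\preccurlyeq$-lower bounded; here I use hypothesis (i). Applying Lemma \ref{le4.1} and Lemma \ref{le4.2} with the fixed interval $A=f_i(x_i,x_{-i})$ shows that each $gH$-difference $f_i(y_i,x_{-i})\ominus_{gH}f_i(x_i,x_{-i})$ remains $\preccurlyeq$-lower semicontinuous and $\preccurlyeq$-lower bounded. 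Summing over $i$ and using Remark \ref{re2.7} (finite sums preserve $\preccurlyeq$-lower semicontinuity) together with the monotonicity of interval addition in Proposition \ref{p2.1}(i) (finite sums of lower bounds give a lower bound) yields that $f(x,\cdot)$ is $\preccurlyeq$-lower semicontinuous and $\preccurlyeq$-lower bounded, which is exactly condition (i) of Theorem \ref{th3.4}.

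Next I would apply Theorem \ref{th3.4} with the given $\varepsilon>0$ and an arbitrary starting point, obtaining $\overline{x}\in V$ whose part (b) reads $f(\overline{x},x)+[\varepsilon\widehat{d}(x,\overline{x}),\varepsilon\widehat{d}(x,\overline{x})]\not\preccurlyeq[0,0]$ for every $x\neq\overline{x}$. By Remark \ref{re3.7} this upgrades to $f(\overline{x},x)+[\varepsilon\widehat{d}(x,\overline{x}),\varepsilon\widehat{d}(x,\overline{x})]\not\prec[0,0]$ for all $x\in V$, the point $x=\overline{x}$ being handled there through Remark \ref{re3.6}. Since $\widehat{d}$ is symmetric, $\widehat{d}(x,\overline{x})=\widehat{d}(\overline{x},x)$, so this statement is exactly the premise of Proposition \ref{p4.2}; that proposition then guarantees that $\overline{x}$ is an $\varepsilon$-Nash equilibrium of $G$, which completes the proof.

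The argument is essentially a chain of previously established results, so there is no deep obstacle. The only step demanding genuine care is the verification of condition (i) of Theorem \ref{th3.4}: one must exploit the coordinatewise structure of $f(x,\cdot)$ and combine Lemmas \ref{le4.1}--\ref{le4.2} with the additivity statements correctly. In particular, it is the $gH$-difference, not ordinary subtraction, that is being controlled inside each summand, which is precisely the reason Lemmas \ref{le4.1} and \ref{le4.2} were proved beforehand; overlooking this and treating the difference as ordinary subtraction would be the natural trap here.
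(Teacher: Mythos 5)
Your proposal is correct and follows essentially the same route as the paper: verify the hypotheses of Theorem \ref{th3.4} for $f(x,\cdot)$ via Lemmas \ref{le4.1}--\ref{le4.2} and Remark \ref{re2.7}, invoke Theorem \ref{th3.4} with Remark \ref{re3.7} to pass from $\not\preccurlyeq$ to $\not\prec$, and conclude with Proposition \ref{p4.2}. The only difference is that you spell out the coordinate-projection argument for lower semicontinuity on $(V,\widehat{d})$, which the paper leaves implicit.
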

\begin{proof}
Thanks to the fact that $f(x,y)=\sum_{i=1}^n (f_i(y_i,x_{-i})\ominus_{gH}f_i(x_i,x_{-i}))$, it follows from Lemma \ref{le4.1} and Remark \ref{re2.7} that, for each $x\in V$, $f(x,\cdot)$ is $\preccurlyeq$-lower semicontinuous on $V$. By Lemma \ref{le4.2}, we can deduce that, for each $x\in V$, $f(x,\cdot)$ is $\preccurlyeq$-lower bounded on $V$. Thus, applying Theorem \ref{th3.4} and Remark \ref{re3.7}, we know that for any given $\varepsilon>0$, there exists $\overline{x}\in V$ such that, for any $y\in V$, $f(\overline{x},y)+[\varepsilon d(\overline{x},y),\varepsilon d(\overline{x},y)]\not\prec[0,0]$. It follows from Proposition \ref{p4.2} that $\overline{x}$ is an $\varepsilon$-Nash equilibrium of $G$.
\end{proof}

\subsection{Interval-valued optimal control problems}

In this subsection, we give some existence theorems for approximate minimal solutions of interval-valued optimal control problems driven by interval-valued differential equations.

We first present the generalized differentiability for interval-valued functions.
\begin{definition}(\cite{CR})\label{de4.5}
Let $(a,b)$ be an open interval in $\mathbb{R}$. An interval-valued function $f:(a,b) \to \mathcal{I}$ is said to be generalized differentiable at $t_0\in (a,b)$ if there exists $\frac{df}{dt}(t_0)\in \mathcal{I}$ such that either
\begin{itemize}
\item[(i)] for any $h>0$ sufficiently near to 0, the Hukuhara differences $f(t_0+h)\ominus_H f(t_0)$ and $f(t_0)\ominus_H f(t_0-h)$ exist, and
$$
\frac{df}{dt}(t_0):=\lim_{h\rightarrow0_+}\frac{f(t_0+h)\ominus_{H}f(t_0)}{h}=\lim_{h\rightarrow0_+}\frac{f(t_0)\ominus_{H}f(t_0-h)}{h}
$$
or
\item[(ii)] for any $h<0$ sufficiently near to 0, the Hukuhara differences $f(t_0+h)\ominus_H f(t_0)$ and $f(t_0)\ominus_H f(t_0-h)$ exist, and
$$
\frac{df}{dt}(t_0):=\lim_{h\rightarrow0_-}\frac{f(t_0+h)\ominus_{H}f(t_0)}{h}=\lim_{h\rightarrow0_-}\frac{f(t_0)\ominus_{H}f(t_0-h)}{h}.
$$
\end{itemize}
\end{definition}

\begin{definition}\label{de4.6}
An interval-valued function $f:(a,b) \to \mathcal{I}$ is said to be (i)-differentiable (resp., (ii)-differentiable) on $(a,b)$ if $f$ is generalized differentiable at every point $t_0\in (a,b)$ in the sense of (i) (resp., (ii)) in Definition \ref{de4.5}.
\end{definition}

\begin{remark}\label{re4.5}
From \cite{CR,SB}, we can see that (I) if $f$ is generalized differentiable, then $f$ is Hausdorff continuous; (II) if $f:(a,b)\to \mathcal{I}$ and $g:(a,b)\to \mathcal{I}$ are (i)-differentiable (resp., (ii)-differentiable), then $f+g$ is also (i)-differentiable (resp., (ii)-differentiable); (III) for any $t\in (a,b)$ and $\lambda\in\mathbb{R}$, $\frac{d(f+g)}{dt}(t)=\frac{df}{dt}(t)+ \frac{dg}{dt}(t)$, $\frac{d(\lambda f)}{dt}(t)=\lambda \frac{df}{dt}(t)$.
\end{remark}

From \cite{AC,RCL},  an interval-valued function $f:[a,b]\to\mathcal{I}$ is called integrably bounded on $[a,b]$ if there exists an Lebesgue integrable function $h: [a,b]\to[0,+\infty)$ such that, for any $t\in [a,b]$ and $x\in f(t)$, $|x|\leq h(t)$. Moreover, an interval-valued function $f$ is said to be measurable on $[a,b]$ if, for any closed subset $A\subset \mathbb{R}$, $\{t\in[a,b]: f(t)\cap A\neq \emptyset\}$ is Lebesgue measurable.

We recall the Aumann integral of an interval-valued function defined as follows.
\begin{definition}\label{de4.7} (\cite{AC})
Let $f:[a,b]\to \mathcal{I}$ be an interval-valued function. The Aumann integral of $f$ over $[a,b]$ is given by
$$
 \int_a^b f(t)dt=\left\{(L)\int_a^b g(t)dt: g\in S(f)\right\},
$$
where $S(f):=\{g:[a,b]\to\mathbb{R}: g ~\textrm{Lebesgue integrable}, ~g(t)\in f(t),\;\forall t\in[a,b]\}$ and $(L)\int_a^b g(t)dt$ denotes the Lebesgue integral of $g$ over $[a,b]$. Moreover, $f$ is said to be Aumann integrable if $S(f)\neq\emptyset$.
\end{definition}
\begin{remark}\label{re4.6}
If $f:[a,b]\to \mathcal{I}$ is measurable and integrably bounded with $f(t)=[\underline{f}(t),\overline{f}(t)]$, then it follows from \cite{RCL} that $f$ is Aumann integrable, $\underline{f}$ and $\overline{f}$ are Lebesgue integrable and $\int_a^b f(s)ds=[(L)\int_a^b \underline{f}(s)ds,(L)\int_a^b \overline{f}(s)ds]$. Moreover,  if $f:[a,b]\to \mathcal{I}$ is Hausdorff continuous, then it is Aumann integrable.
\end{remark}

We also need the following lemmas.
\begin{lemma}( \cite{CR,SB,KNR1})\label{le4.3}
Let $f:[a,b]\to\mathcal{I}$ be an interval-valued function. Then the following conclusions are true:
\begin{itemize}
\item [(i)] if $f$ is Hausdorff continuous,  then for any $t\in [a,b]$, $F(t):=\int_a^t f(s)ds$ is (i)-differentiable and $\frac{dF}{dt}(t)=f(t)$;

\item [(ii)] if $f$ is (i)-differentiable and $\frac{df}{dt}$ is Aumann integrable, then $f(t)=f(a)+\int_a^t \frac{df}{ds}(s)ds$ for all $t\in [a,b]$;

\item[(iii)] ff $f$ is Hausdorff continuous, then for any $t\in [a,b]$, $G(t):=\chi\ominus_H\int_a^t -f(s)ds$ is (ii)-differentiable and $\frac{dG}{dt}(t)=f(t)$, where $\chi\in \mathcal{I}$ is such that the previous Hukuhara difference exists;

\item[(iv)] if $f$ is (ii)-differentiable and $\frac{df}{dt}$ is Aumann integrable, then $f(t)=f(a)\ominus_{H}\int_a^t -\frac{df}{ds}(s)ds$ for all $t\in [a,b]$.
\end{itemize}
\end{lemma}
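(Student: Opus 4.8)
The plan is to reduce all four statements to the two real endpoint functions $\underline{f}$ and $\overline{f}$ and then invoke the classical fundamental theorem of calculus together with the integral representation recorded in Remark \ref{re4.6}. The first step is to set up the pointwise dictionary for Definition \ref{de4.5}. Whenever the Hukuhara difference $f(t_0+h)\ominus_H f(t_0)$ exists it equals $[\underline{f}(t_0+h)-\underline{f}(t_0),\overline{f}(t_0+h)-\overline{f}(t_0)]$; dividing by $h>0$ and letting $h\to 0_+$ shows that $f$ is (i)-differentiable at $t_0$ precisely when $\underline{f}$ and $\overline{f}$ are differentiable there with $\underline{f}'(t_0)\le\overline{f}'(t_0)$, and then $\frac{df}{dt}(t_0)=[\underline{f}'(t_0),\overline{f}'(t_0)]$. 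Dividing instead by $h<0$, so that the negative scalar $1/h$ reverses the interval, and letting $h\to 0_-$ shows that $f$ is (ii)-differentiable at $t_0$ precisely when $\overline{f}'(t_0)\le\underline{f}'(t_0)$, with $\frac{df}{dt}(t_0)=[\overline{f}'(t_0),\underline{f}'(t_0)]$. I would establish these two equivalences once, since all four parts follow by feeding the appropriate endpoint functions into them.

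For (i), Hausdorff continuity of $f$ gives continuity of $\underline{f},\overline{f}$ by Remark \ref{re2.8}, and Remark \ref{re4.6} yields $F(t)=[\int_a^t\underline{f}(s)ds,\int_a^t\overline{f}(s)ds]=:[\underline{F}(t),\overline{F}(t)]$. The width $\overline{F}(t)-\underline{F}(t)=\int_a^t(\overline{f}-\underline{f})$ is nondecreasing because $\overline{f}\ge\underline{f}$, so for $h>0$ both $F(t+h)\ominus_H F(t)$ and $F(t)\ominus_H F(t-h)$ exist. The classical fundamental theorem of calculus gives $\underline{F}'(t)=\underline{f}(t)\le\overline{f}(t)=\overline{F}'(t)$, so the (i)-equivalence applies and $\frac{dF}{dt}(t)=[\underline{f}(t),\overline{f}(t)]=f(t)$. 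Part (iii) is the mirror computation: writing $-f(s)=[-\overline{f}(s),-\underline{f}(s)]$ gives $\int_a^t(-f(s))ds=[-\overline{F}(t),-\underline{F}(t)]$, whose width is nonincreasing, which is exactly the orientation needed for the Hukuhara differences in case (ii) and which forces the stated requirement on $\chi$. Then $G(t)=\chi\ominus_H[-\overline{F}(t),-\underline{F}(t)]=[\underline{\chi}+\overline{F}(t),\overline{\chi}+\underline{F}(t)]=:[\underline{G}(t),\overline{G}(t)]$, so $\underline{G}'(t)=\overline{f}(t)$ and $\overline{G}'(t)=\underline{f}(t)$; since $\underline{f}(t)\le\overline{f}(t)$ the (ii)-equivalence gives $\frac{dG}{dt}(t)=[\overline{G}'(t),\underline{G}'(t)]=[\underline{f}(t),\overline{f}(t)]=f(t)$.

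For (ii) and (iv) I reverse the direction. If $f$ is (i)-differentiable then $\frac{df}{dt}(t)=[\underline{f}'(t),\overline{f}'(t)]$, so Aumann integrability plus Remark \ref{re4.6} gives $\int_a^t\frac{df}{ds}(s)ds=[\int_a^t\underline{f}'(s)ds,\int_a^t\overline{f}'(s)ds]$ with $\underline{f}',\overline{f}'$ Lebesgue integrable; applying the endpointwise fundamental theorem, $\int_a^t\underline{f}'=\underline{f}(t)-\underline{f}(a)$ and likewise for $\overline{f}$, whence $f(a)+\int_a^t\frac{df}{ds}ds=[\underline{f}(t),\overline{f}(t)]=f(t)$. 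Part (iv) is identical after noting that for (ii)-differentiable $f$ one has $\frac{df}{dt}(t)=[\overline{f}'(t),\underline{f}'(t)]$, so $\int_a^t(-\frac{df}{ds})ds=[-(\underline{f}(t)-\underline{f}(a)),-(\overline{f}(t)-\overline{f}(a))]$ and forming $f(a)\ominus_H\int_a^t(-\frac{df}{ds})ds$ recovers $[\underline{f}(t),\overline{f}(t)]=f(t)$.

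I expect the genuine obstacles to be two. First, recovering $\underline{f}$ from $\underline{f}'$ in parts (ii) and (iv) is precisely the Lebesgue version of the fundamental theorem for an everywhere-differentiable function with integrable derivative; I would invoke this classical (but not elementary) fact rather than reprove it, applying it separately to $\underline{f}$ and $\overline{f}$. Second, throughout parts (i) and (iii) one must keep checking that every Hukuhara difference demanded by Definition \ref{de4.5} actually exists, and this is exactly where the monotonicity of the interval width—nondecreasing for the type-(i) object $F$ and nonincreasing for the type-(ii) object $G$—is doing the work; this is the step where a sign or orientation slip (especially in the $h<0$ scalar reversal) is most likely to occur, so I would treat it with the most care.
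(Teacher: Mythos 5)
The paper offers no proof of this lemma --- it is imported wholesale from the cited references \cite{CR,SB,KNR1} --- so there is no in-paper argument to compare against. Your reduction to the endpoint functions $\underline{f},\overline{f}$ followed by the classical fundamental theorem of calculus is exactly the standard route taken in that literature, and the argument is essentially sound: the width computations that guarantee the existence of the required Hukuhara differences, the reversal of endpoints under division by $h<0$, and the final identifications all check out. Two expositional slips are worth correcting. First, in part (iii) the width of $\int_a^t(-f(s))\,ds=[-\overline{F}(t),-\underline{F}(t)]$ is $\overline{F}(t)-\underline{F}(t)$, which is \emph{nondecreasing}, not nonincreasing; what is nonincreasing is the width of $G(t)=[\underline{\chi}+\overline{F}(t),\overline{\chi}+\underline{F}(t)]$, namely $(\overline{\chi}-\underline{\chi})-(\overline{F}(t)-\underline{F}(t))$, and it is this monotonicity that validates the Hukuhara differences demanded by case (ii) of Definition \ref{de4.5}. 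Your subsequent computation of $G$ and $\frac{dG}{dt}$ is correct, so this is a mislabeling rather than a gap. Second, your opening ``dictionary'' is stated as an equivalence (``precisely when''), but endpoint differentiability with $\underline{f}'(t_0)\le\overline{f}'(t_0)$ does not by itself imply (i)-differentiability --- the Hukuhara differences must also exist for small $h$ (consider $f(t)=[0,1-t^2]$ at $t_0=0$) --- which is precisely why the Qiu reference \cite{QD} in the bibliography exists. Since you separately verify the existence of those differences via width monotonicity wherever you invoke the backward direction, the proof survives, but the dictionary should be stated conditionally on that existence. A last small point: Remark \ref{re4.6} identifies the Aumann integral with the interval of endpoint Lebesgue integrals under measurability and integrable boundedness, whereas parts (ii) and (iv) only hypothesize Aumann integrability of $\frac{df}{dt}$; you should either note that the endpoint functions $\underline{f}',\overline{f}'$ are themselves Lebesgue integrable in this setting or strengthen the invocation accordingly.
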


\begin{lemma}(\cite{AC,RCL,Kaleva})\label{le4.4}
Let $f: [a,b]\to \mathcal{I}$  and $g: [a,b]\to \mathcal{I}$ be Aumann integrable. Then
\begin{itemize}
\item[(i)] $\int_a^b f(s)+g(s)ds=\int_a^b f(s)ds +\int_a^b g(s)ds$;
\item[(ii)] for any $\lambda\in\mathbb{R}$, $\int_a^b \lambda f(s)ds=\lambda \int_a^b f(s)ds$;
\item[(iii)] $d_H(f(\cdot),g(\cdot))$ is Lebesgue integrable on $[a,b]$;
\item[(iv)] $d_H(\int_a^b f(s)ds, \int_a^b g(s)ds)\leq (L)\int_a^b d_H(f(s),g(s))ds$;
\item[(v)] $\int_a^b f(s)ds=\int_a^c f(s)ds+ \int_c^b f(s)ds$, where $c\in[a,b]$.
\end{itemize}
\end{lemma}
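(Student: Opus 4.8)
The plan is to reduce every assertion to the two real-valued endpoint functions $\underline{f},\overline{f},\underline{g},\overline{g}$ and then invoke the corresponding classical facts for the Lebesgue integral together with the explicit formulas for interval arithmetic and for $d_H$. The working representation throughout is the one recorded in Remark \ref{re4.6}, namely $\int_a^b f(s)\,ds=[(L)\int_a^b\underline{f}(s)\,ds,(L)\int_a^b\overline{f}(s)\,ds]$, under which each endpoint of an Aumann integral is an ordinary Lebesgue integral of a Lebesgue integrable real-valued function.

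For the algebraic items (i), (ii) and (v) I would argue componentwise. For (i), the interval sum rule gives $f(s)+g(s)=[\underline{f}(s)+\underline{g}(s),\overline{f}(s)+\overline{g}(s)]$, so the two endpoints of $\int_a^b(f+g)$ are $(L)\int_a^b(\underline{f}+\underline{g})$ and $(L)\int_a^b(\overline{f}+\overline{g})$; linearity of the Lebesgue integral splits each of these and, reassembling by the interval sum rule, yields $\int_a^b f+\int_a^b g$. For (ii) the only point needing care is the sign of $\lambda$: when $\lambda\ge 0$ one has $\lambda f(s)=[\lambda\underline{f}(s),\lambda\overline{f}(s)]$ and the claim is immediate, while when $\lambda<0$ the scalar rule swaps the endpoints to $[\lambda\overline{f}(s),\lambda\underline{f}(s)]$ (which are correctly ordered since $\underline{f}\le\overline{f}$); pulling $\lambda$ out of each Lebesgue integral and comparing with the swapped endpoints of $\lambda\int_a^b f$ shows the two intervals coincide. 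Item (v) is then immediate from additivity of the Lebesgue integral over $[a,c]$ and $[c,b]$ applied to $\underline{f}$ and $\overline{f}$, recombined through part (i).

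The metric items (iii) and (iv) rest on the identity $d_H(f(s),g(s))=\max\{|\underline{f}(s)-\underline{g}(s)|,|\overline{f}(s)-\overline{g}(s)|\}$. For (iii), each difference is Lebesgue measurable (hence so is its absolute value and the pointwise maximum of the two), and the pointwise bound $d_H(f(s),g(s))\le|\underline{f}(s)-\underline{g}(s)|+|\overline{f}(s)-\overline{g}(s)|$ dominates $d_H(f(\cdot),g(\cdot))$ by a sum of Lebesgue integrable functions, giving integrability. For (iv), I would write $d_H(\int_a^b f,\int_a^b g)=\max\{|(L)\int_a^b(\underline{f}-\underline{g})|,|(L)\int_a^b(\overline{f}-\overline{g})|\}$ and apply the triangle inequality for the Lebesgue integral to each term: $|(L)\int_a^b(\underline{f}-\underline{g})|\le(L)\int_a^b|\underline{f}-\underline{g}|\le(L)\int_a^b d_H(f,g)$, and likewise for the $\overline{f}$ term, so the maximum is bounded by the same integral.

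The computations are routine once the endpoint representation is in place; the only genuine subtleties are the case split on the sign of $\lambda$ in (ii) and, in (iii) and (iv), the need to pass from the maximum defining $d_H$ to the dominating sum of absolute differences in order to secure measurability and integrability before the Lebesgue triangle inequality can be applied termwise.
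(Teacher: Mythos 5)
The paper offers no proof of this lemma at all; it is imported verbatim from the cited references \cite{AC,RCL,Kaleva}, so there is no in-paper argument to compare against. Your endpoint-function reduction is the standard way to verify all five items and each step is sound: (i), (ii) (with the correct sign split) and (v) follow from linearity and additivity of the Lebesgue integral applied to $\underline{f},\overline{f}$; (iii) follows because the maximum of two measurable functions is measurable and is dominated by the integrable sum $|\underline{f}-\underline{g}|+|\overline{f}-\overline{g}|$; and (iv) follows from the Lebesgue triangle inequality applied to each of the two terms inside the maximum. The one point worth flagging is a mismatch between the lemma's stated hypothesis and the hypothesis your argument actually uses: the endpoint representation $\int_a^b f(s)\,ds=[(L)\int_a^b\underline{f}(s)\,ds,\,(L)\int_a^b\overline{f}(s)\,ds]$ of Remark \ref{re4.6} is established there under measurability \emph{and} integrable boundedness, whereas the lemma assumes only Aumann integrability (i.e.\ $S(f)\neq\emptyset$), under which $\underline{f}$ and $\overline{f}$ need not be Lebesgue integrable and the Aumann integral need not even lie in $\mathcal{I}$. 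This is really a looseness in the lemma as transcribed rather than a defect of your proof --- every application in the paper involves Hausdorff continuous integrands on compact intervals, which are measurable and integrably bounded --- but you should state explicitly that you are proving the lemma under the hypotheses of Remark \ref{re4.6}, since that is what your representation requires.
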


\begin{lemma}(\cite{KNR2})\label{le4.5}
Let $A,B,C,D\in \mathcal{I}$. If the Hukuhara differences $A\ominus_H C$ and $B\ominus_H D$ exist, then
$d_H(A\ominus_H C, B\ominus_H D)\leq d_H(A,B)+d_H(C,D)$.
\end{lemma}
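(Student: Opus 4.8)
The plan is to reduce everything to the endpoint representation of intervals and then to lean entirely on the triangle inequality for real absolute values. First I would write $A=[\underline{a},\overline{a}]$, $B=[\underline{b},\overline{b}]$, $C=[\underline{c},\overline{c}]$ and $D=[\underline{d},\overline{d}]$. Since $A\ominus_H C$ exists, by definition there is an interval $E\in\mathcal{I}$ with $A=C+E$; comparing endpoints in the addition rule $C+E=[\underline{c}+\underline{e},\overline{c}+\overline{e}]$ forces $E=[\underline{a}-\underline{c},\overline{a}-\overline{c}]$, so that $A\ominus_H C=[\underline{a}-\underline{c},\overline{a}-\overline{c}]$. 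In exactly the same way, $B\ominus_H D=[\underline{b}-\underline{d},\overline{b}-\overline{d}]$. Both expressions are genuine elements of $\mathcal{I}$ precisely because the two Hukuhara differences are assumed to exist, which is what makes the endpoint formula for $d_H$ applicable to them.

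Next I would invoke the endpoint formula for the Hausdorff distance to obtain
$$
d_H(A\ominus_H C,B\ominus_H D)=\max\big\{\,|(\underline{a}-\underline{c})-(\underline{b}-\underline{d})|,\ |(\overline{a}-\overline{c})-(\overline{b}-\overline{d})|\,\big\}.
$$
The key algebraic step is to regroup each of the two arguments of the maximum by pairing the lower endpoints of $A$ and $B$ together and the lower endpoints of $C$ and $D$ together (and similarly for the upper endpoints), writing, for instance, $(\underline{a}-\underline{c})-(\underline{b}-\underline{d})=(\underline{a}-\underline{b})-(\underline{c}-\underline{d})$. Applying the triangle inequality for $|\cdot|$ then yields $|(\underline{a}-\underline{c})-(\underline{b}-\underline{d})|\le|\underline{a}-\underline{b}|+|\underline{c}-\underline{d}|$, and symmetrically $|(\overline{a}-\overline{c})-(\overline{b}-\overline{d})|\le|\overline{a}-\overline{b}|+|\overline{c}-\overline{d}|$.

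Finally I would bound $|\underline{a}-\underline{b}|\le\max\{|\underline{a}-\underline{b}|,|\overline{a}-\overline{b}|\}=d_H(A,B)$ and $|\underline{c}-\underline{d}|\le d_H(C,D)$, together with the analogous bounds for the upper-endpoint term, so that each of the two arguments of the maximum is at most $d_H(A,B)+d_H(C,D)$. Since the maximum of quantities bounded by a common value is still bounded by that value, the desired inequality follows. I do not expect any genuine obstacle here: the argument is essentially a computation, and the only point requiring care is the correct pairing of endpoints in the regrouping step, after which the claim is immediate from the triangle inequality for real numbers.
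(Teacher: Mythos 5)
Your proof is correct and complete. Note that the paper does not actually prove Lemma \ref{le4.5}: it is imported from \cite{KNR2} with no argument given, so there is no in-paper proof to measure yours against. Your endpoint computation is a valid, self-contained verification: the existence of the Hukuhara differences indeed forces $A\ominus_H C=[\underline{a}-\underline{c},\overline{a}-\overline{c}]$ and $B\ominus_H D=[\underline{b}-\underline{d},\overline{b}-\overline{d}]$, and the regrouping $(\underline{a}-\underline{c})-(\underline{b}-\underline{d})=(\underline{a}-\underline{b})-(\underline{c}-\underline{d})$ followed by the triangle inequality and the bound $|\underline{a}-\underline{b}|\leq d_H(A,B)$ closes the argument. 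For comparison, there is a coordinate-free route that uses only the two properties of $d_H$ already recorded in Section 2, namely translation invariance $d_H(A+C,B+C)=d_H(A,B)$ and subadditivity $d_H(A+C,B+D)\leq d_H(A,B)+d_H(C,D)$: writing $E=A\ominus_H C$ and $F=B\ominus_H D$, so that $A=C+E$ and $B=D+F$, one gets
$$
d_H(E,F)=d_H(E+C+D,\,F+C+D)=d_H(A+D,\,B+C)\leq d_H(A,B)+d_H(C,D).
$$
That version generalizes verbatim to Hukuhara differences of compact convex sets in higher dimensions, where endpoint formulas are unavailable; your version has the advantage of being fully explicit and of making transparent exactly where the existence hypothesis is used.
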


\begin{lemma}\label{le4.6}
Let $A,B,C\in \mathcal{I}$. Then the following statements are true:
\begin{itemize}
\item[(i)] if the Hukuhara differences $A\ominus_H B$ and $B\ominus_H C$ exist, then $(A\ominus_H B)+ C=A\ominus_H(B\ominus_H C)$;
\item[(ii)] if the Hukuhara differences $A\ominus_H B$ and $(A\ominus_H B)\ominus_H C$ exist, then $(A\ominus_H B)\ominus_H C=A\ominus_H (B+C)$.
\end{itemize}
\end{lemma}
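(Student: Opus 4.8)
The plan is to reduce everything to the endpoints of the intervals, since the Hukuhara difference admits a completely explicit coordinate description. Writing $A=[\underline{a},\overline{a}]$, $B=[\underline{b},\overline{b}]$ and $C=[\underline{c},\overline{c}]$, recall that $A\ominus_H B$ exists precisely when there is an interval $E$ with $A=B+E$; comparing endpoints forces $E=[\underline{a}-\underline{b},\overline{a}-\overline{b}]$, so existence is equivalent to the single inequality $\underline{a}-\underline{b}\leq\overline{a}-\overline{b}$ (which is exactly the requirement that the candidate endpoints form a legitimate interval). I would record this observation first and then verify both identities by a direct endpoint computation, the only genuine point being to check that the Hukuhara differences appearing on the right-hand sides are well-defined, since their existence is not assumed outright.

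For part (i), I would compute
$$(A\ominus_H B)+C=[\underline{a}-\underline{b}+\underline{c},\overline{a}-\overline{b}+\overline{c}].$$
On the other side, $B\ominus_H C=[\underline{b}-\underline{c},\overline{b}-\overline{c}]$ exists by hypothesis, and then the candidate for $A\ominus_H(B\ominus_H C)$ is
$$[\underline{a}-(\underline{b}-\underline{c}),\overline{a}-(\overline{b}-\overline{c})]=[\underline{a}-\underline{b}+\underline{c},\overline{a}-\overline{b}+\overline{c}],$$
which matches the left-hand side verbatim. It remains to confirm that this last difference exists, i.e. that $\underline{a}-\underline{b}+\underline{c}\leq\overline{a}-\overline{b}+\overline{c}$; this follows by adding the inequality $\underline{a}-\underline{b}\leq\overline{a}-\overline{b}$ (guaranteed by the existence of $A\ominus_H B$) to the trivial inequality $\underline{c}\leq\overline{c}$.

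For part (ii), the hypotheses give $A\ominus_H B=[\underline{a}-\underline{b},\overline{a}-\overline{b}]$ and then
$$(A\ominus_H B)\ominus_H C=[\underline{a}-\underline{b}-\underline{c},\overline{a}-\overline{b}-\overline{c}],$$
whose existence is exactly the assumption $\underline{a}-\underline{b}-\underline{c}\leq\overline{a}-\overline{b}-\overline{c}$. Since $B+C=[\underline{b}+\underline{c},\overline{b}+\overline{c}]$, the candidate for $A\ominus_H(B+C)$ is $[\underline{a}-\underline{b}-\underline{c},\overline{a}-\overline{b}-\overline{c}]$, and the very same inequality certifies that this Hukuhara difference exists; the two expressions then coincide.

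In both parts the computation itself is routine, so the only step deserving attention is the bookkeeping of existence conditions: one must not write $A\ominus_H(B\ominus_H C)$ or $A\ominus_H(B+C)$ before confirming that the relevant endpoint inequality holds. I expect this to be the sole (mild) obstacle, and it is dispatched in each case by combining the assumed existence inequalities with $\underline{c}\leq\overline{c}$, respectively with the single inequality already supplied by the second assumed Hukuhara difference.
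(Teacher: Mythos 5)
Your proof is correct, but it takes a genuinely different route from the paper's. You reduce everything to endpoints, using the explicit description $A\ominus_H B=[\underline{a}-\underline{b},\overline{a}-\overline{b}]$ together with the existence criterion $\underline{a}-\underline{b}\leq\overline{a}-\overline{b}$, and then verify both identities (and the existence of the right-hand differences) by direct inequality bookkeeping. The paper instead works coordinate-free: it sets $A\ominus_H B=D_1$, $B\ominus_H C=D_2$, rewrites the hypotheses as $A=B+D_1$ and $B=C+D_2$, substitutes to get $A=D_2+(D_1+C)$, and reads off $A\ominus_H D_2=D_1+C$ directly from the definition (and analogously for (ii)). The paper's argument is shorter and handles existence of the right-hand differences automatically — the equation $A=D_2+(D_1+C)$ \emph{is} the existence statement — and it would carry over verbatim to Hukuhara differences of compact convex sets in higher dimensions, where no endpoint description is available. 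Your version buys explicitness: the existence conditions become concrete scalar inequalities, which you correctly identify as the only nontrivial point and discharge properly in both parts. Both proofs are complete and valid.
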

\begin{proof}
(i) Let $A\ominus_H B=D_1$ and $B\ominus_H C=D_2$, where $D_1,D_2\in \mathcal{I}$. Then we have $A=B+ D_1$ and $B=C+D_2$.  It follows that $A=D_2+ D_1+C$ and so $A\ominus_H D_2=D_1+C$. Thus, we can see that $A\ominus_H(B\ominus_H C)=(A\ominus_H B)+C$.

(ii) Let $A\ominus_H B=A_1$ and $(A\ominus_H B)\ominus_H C=A_2$, where $A_1,A_2\in \mathcal{I}$. Then $A=B+A_1$ and $A_1=C+A_2$ and so
$A=B+C+A_2$. Thus, $A\ominus_H (B+C)=A_2=(A\ominus_H B)\ominus_H C$.
\end{proof}

Let $T>0$ and $C([0,T],\mathcal{I})$ be the set of all Hausdorff continuous interval-valued functions from $[0,T]$ to $\mathcal{I}$ with the metric $D$, where $D(x,y):=\sup_{t\in [0,T]}d_H(x(t),y(t))$ for all $x,y\in C([0,T],\mathcal{I})$. Clearly, $(C([0,T],\mathcal{I}),D)$ is a complete metric space.

We consider a system governed by the following interval-valued control differential equation:
\begin{eqnarray}\label{eq4.14}
 \left\{
\begin{array}{ll}
\displaystyle\frac{dx}{dt}(t)=f(t,x(t),u(t)), &\mbox {$t\in[0,T],$}\\
x(0)=x_0\in \mathcal{I},
\end{array}
\right.
\end{eqnarray}
where $x:[0,T]\to \mathcal{I}$ is the state of the system, $u:[0,T]\to\mathbb{R}^n$ is the control, $f:[0,T]\times \mathcal{I}\times \mathbb{R}^n\to \mathcal{I}$ is Hausdorff continuous, and $x$ is (i)- or (ii)-differentiable on $[0,T]$.

In order to obtain our results, we also need the following assumptions:
\begin{itemize}
\item[(A1)]  $u\in U_{ad}$, where $U_{ad}$ is the set of all continuous functions from $[0,T]$ to $\mathbb{R}^n$ with the norm $\|u\|_{U_{ad}}=\max_{t\in[0,T]}\|u(t)\|_{\mathbb{R}^n}$;
\item[(A2)] there exist $k_1>0$ and $k_2>0$ such that, for any $t\in[0,T]$, $x_1,x_2\in \mathcal{I}$ and $u_1,u_2\in\mathbb{R}^n$, $$d_H(f(t,x_1,u_1),f(t,x_2,u_2))\leq k_1d_H(x_1,x_2)+k_2\|u_1-u_2\|_{\mathbb{R}^n}.$$
\end{itemize}

\begin{remark}\label{re4.7}
For any given $u\in U_{ad}$, if the (i)-differentiability is taken on equation (\ref{eq4.14}), then by Theorem 6.1 in \cite{Kaleva}, assumption (A2) implies that there exists a unique solution $x$ of equation (\ref{eq4.14}) which can be expressed as
$x(t)=x_0+\int_{0}^t f(s,x(s),u(s))ds$ for all $t\in [0,T]$.
\end{remark}

Clearly, $U_{ad}$ is a Banach space and $f(\cdot,x(\cdot),u(\cdot))$ is Hausdorff continuous on $[0,T]$ when $x\in C([0,T],\mathcal{I})$ and $u\in U_{ad}$.

The following theorem ensures the existence of a unique solution if the (ii)-differentiability is taken on equation (\ref{eq4.14}).
\begin{theorem}\label{th4.7}
Let $u\in U_{ad}$ be given and the (ii)-differentiability be taken on equation (\ref{eq4.14}). Suppose that (A1) and (A2) are satisfied. If for any $x\in C([0,T],\mathcal{I})$ and $\chi\in \mathcal{I}$, the Hukuhara difference
$\chi\ominus_H\int_0^t -f(s,x(s),u(s))ds$ exists for all $t\in[0,T]$, then there exists a unique solution $x$ of equation (\ref{eq4.14}) which can be expressed as $x(t)=x_0\ominus_H\int_0^t-f(s,x(s),u(s))ds$.
\end{theorem}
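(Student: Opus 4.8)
The plan is to recast equation (\ref{eq4.14}) under (ii)-differentiability as a fixed point problem on the complete metric space $(C([0,T],\mathcal{I}),D)$ and to solve it by a Picard/Banach iteration argument. First I would establish the equivalence between the differential equation and the integral equation $x(t)=x_0\ominus_H\int_0^t -f(s,x(s),u(s))ds$. Indeed, if $x$ is a solution that is (ii)-differentiable with Aumann integrable derivative, then Lemma \ref{le4.3}(iv) applied with $a=0$ yields exactly this representation; conversely, since $f(\cdot,x(\cdot),u(\cdot))$ is Hausdorff continuous whenever $x\in C([0,T],\mathcal{I})$ and $u\in U_{ad}$, Lemma \ref{le4.3}(iii) shows that any $x$ of this form is (ii)-differentiable with $\frac{dx}{dt}(t)=f(t,x(t),u(t))$ and, evaluating at $t=0$ where the integral equals $[0,0]$, that $x(0)=x_0$. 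Thus solving (\ref{eq4.14}) is equivalent to finding a fixed point of the operator $(\Psi x)(t):=x_0\ominus_H\int_0^t -f(s,x(s),u(s))ds$, paralleling the (i)-differentiable case recorded in Remark \ref{re4.7}.

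Next I would verify that $\Psi$ is a well-defined self-map of $C([0,T],\mathcal{I})$. The standing hypothesis guarantees that the Hukuhara difference appearing in $(\Psi x)(t)$ exists for every $t\in[0,T]$ and every $x\in C([0,T],\mathcal{I})$ (taking $\chi=x_0$), while Lemma \ref{le4.3}(iii) together with Remark \ref{re4.5}(I) ensures that $\Psi x$ is (ii)-differentiable, hence Hausdorff continuous, so that $\Psi x\in C([0,T],\mathcal{I})$.

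The analytic core is the contraction estimate. For $x,y\in C([0,T],\mathcal{I})$, applying Lemma \ref{le4.5} with the common minuend $x_0$ gives $d_H((\Psi x)(t),(\Psi y)(t))\le d_H\!\left(\int_0^t -f(s,x(s),u(s))ds,\int_0^t -f(s,y(s),u(s))ds\right)$. Then Lemma \ref{le4.4}(iv), the identity $d_H(-A,-B)=d_H(A,B)$, and the Lipschitz assumption (A2) (whose $k_2$ term vanishes since the control $u$ is common) yield $d_H((\Psi x)(t),(\Psi y)(t))\le k_1\int_0^t d_H(x(s),y(s))ds$. A routine induction, applying the one-step estimate to the iterates $\Psi^{n-1}x$ and $\Psi^{n-1}y$, then produces $d_H((\Psi^n x)(t),(\Psi^n y)(t))\le \frac{(k_1 t)^n}{n!}D(x,y)$, whence $D(\Psi^n x,\Psi^n y)\le \frac{(k_1T)^n}{n!}D(x,y)$. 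Since $(k_1T)^n/n!\to 0$, some iterate $\Psi^N$ is a contraction, and the iterated Banach fixed point theorem furnishes a unique fixed point $x^*\in C([0,T],\mathcal{I})$; by the equivalence above, $x^*$ is the unique solution of (\ref{eq4.14}) with the asserted representation.

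I expect the main obstacle to lie not in the estimate but in the well-definedness of the iteration: in interval analysis the Hukuhara difference need not exist, so every iterate $\Psi^n x$ must again admit the difference $x_0\ominus_H\int_0^t(\cdots)$. This is precisely what the hypothesis on $\chi\ominus_H\int_0^t -f(s,x(s),u(s))ds$ is designed to supply, applied successively with $\chi=x_0$ to the continuous iterates $\Psi^{n-1}x$; care is needed to confirm at each stage that $\Psi^{n-1}x$ indeed lies in $C([0,T],\mathcal{I})$ so that the hypothesis remains applicable and the structural form $x_0\ominus_H\int_0^t(\cdots)$ required by Lemma \ref{le4.5} is preserved throughout.
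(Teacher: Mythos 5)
Your proposal is correct, and it reaches the fixed point by a genuinely different route from the paper. Both arguments reduce the problem to the same integral operator $(\Psi x)(t)=x_0\ominus_H\int_0^t -f(s,x(s),u(s))ds$ on $(C([0,T],\mathcal{I}),D)$, justify well-definedness from the standing hypothesis together with Lemma \ref{le4.3}(iii) and Remark \ref{re4.5}, and derive the same one-step estimate via Lemma \ref{le4.5}, Lemma \ref{le4.4}(iv) and (A2). The difference is in how contractivity is obtained: the paper works locally, fixing $\delta$ with $\delta k_1<1$ so that $S$ is a contraction on $C([t_1,t_1+\delta],\mathcal{I})$, and then covers $[0,T]$ by finitely many short intervals and glues the local solutions, which requires Lemma \ref{le4.6}(i)--(ii) both to make sense of the shifted operator $y\ominus_H\int_{t_1}^t(\cdots)$ and to recover the global representation $x_0\ominus_H\int_0^t(\cdots)$ after gluing. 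You instead keep the full interval and show by induction that $D(\Psi^n x,\Psi^n y)\leq \frac{(k_1T)^n}{n!}D(x,y)$, so some iterate is a contraction and the iterated Banach fixed point theorem applies; this bypasses the gluing step and delivers the global representation directly, at the price of having to confirm (as you do) that every iterate $\Psi^{n-1}x$ remains in $C([0,T],\mathcal{I})$ so that the hypothesis on the Hukuhara difference keeps the iteration well-defined. Your one-step bound $d_H((\Psi x)(t),(\Psi y)(t))\leq k_1\int_0^t d_H(x(s),y(s))ds$ and the factorial induction are both valid, and the equivalence between the differential and integral formulations via Lemma \ref{le4.3}(iii)--(iv) is handled correctly, so no gap remains.
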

\begin{proof}
Let $u\in U_{ad}$ be given. Then by Remark \ref{re4.5} and Lemma \ref{le4.3}, we know that $x$ is a solution of equation (\ref{eq4.14}) if and only if
$x(t)=x_0\ominus_H\int_0^t-f(s,x(s),u(s))ds$ for all $t\in[0,T]$.

For any given $(t_1,y)\in [0,T]\times \mathcal{I}$ and $\delta>0$ satisfying $\delta k_1<1$, we claim that the following interval-valued differential equation
\begin{eqnarray}\label{eq4.15}
 \left\{
\begin{array}{ll}
\displaystyle\frac{dx}{dt}(t)=f(t,x(t),u(t)),\\
x(t_1)=y
\end{array}
\right.
\end{eqnarray}
has a unique solution on $[t_1,t_1+\delta]$. In fact, for any $x\in C([t_1,t_1+\delta],\mathcal{I})$, we can define
$$
(Sx)(t):=y\ominus_H\int_{t_1}^t -f(s,x(s),u(s))ds,\;\forall t\in[t_1,t_1+\delta].
$$
By the hypothesis, we know that, for any $t\in[t_1,t_1+\delta]$, $y\ominus_H\int_{0}^t-f(s,x(s),u(s))ds$ exists.
It follows from Lemma \ref{le4.4} that
$$\int_{0}^t -f(s,x(s),u(s))ds \ominus_H\int_{0}^{t_1}-f(s,x(s),u(s))ds=\int_{t_1}^t-f(s,x(s),u(s))ds.$$
Moreover, invoking Lemma \ref{le4.6}(i),  for any $t\in[t_1,t_1+\delta]$, one has
$$
(y\ominus_H\int_{0}^t -f(s,x(s),u(s))ds)+\int_{0}^{t_1} -f(s,x(s),u(s))ds=y\ominus_H\int_{t_1}^t-f(s,x(s),u(s))ds
$$
and so  $y\ominus_H\int_{t_1}^t -f(s,x(s),u(s))ds$ exists. Thus, $(Sx)(t)$ is well-defined. Taking into account Lemma \ref{le4.3} and Remark \ref{re4.5}, we have $Sx\in C([t_1,t_1+\delta],\mathcal{I})$. Moreover, by assumption (A2) and  Lemmas \ref{le4.4} and \ref{le4.5},  for any $z_1,z_2\in C([t_1,t_1+\delta],\mathcal{I})$,
\begin{align}
D(S(z_1),S(z_2))&=\sup_{t\in[t_1,t_1+\delta]} d_H(y\ominus_H\int_{t_1}^t-f(s,z_1(s),u(s))ds,y
 \ominus_H\int_{t_1}^t-f(s,z_2(s),u(s))ds)\nonumber\\
&\leq\sup_{t\in[t_1,t_1+\delta]}d_H(\int_{t_1}^t f(s,z_1(s),u(s))ds,\int_{t_1}^t f(s,z_2(s),u(s))ds)\nonumber\\
&\leq (L)\int_{t_1}^{t_1+\delta}d_H(f(s,z_1(s),u(s)),f(s,z_2(s),u(s)))ds\nonumber\\
&\leq (L)\int_{t_1}^{t_1+\delta} k_1 d_H(z_1(s),z_2(s))ds\nonumber\\
&\leq \delta k_1 D(z_1,z_2).\nonumber
\end{align}
Since $\delta k_1<1$,  Banach's fixed point theorem shows that there exists a unique point $x\in C([t_1,t_1+\delta],\mathcal{I})$ such that
$$
x(t)=y\ominus_H\int_{t_1}^t -f(s,x(s),u(s))ds,\quad \forall t\in [t_1,t_1+\delta].
$$
Thus, $x$ is the unique solution on $[t_1,t_1+\delta]$ for equation (\ref{eq4.15}).

Since $[0,T]$ can be expressed as the union of a finite family of intervals $I_k$ ($k=1,2,\cdots$) with the length of each
interval less than $\delta$, there is a unique solution on each interval $I_k$ for equation (\ref{eq4.15}). Thus, by Lemma \ref{le4.4} and Lemma \ref{le4.6} (ii), piecing these solutions together, we can obtain a unique solution $x$ on $[0,T]$ for equation (\ref{eq4.14}) with $x(t)=x_0\ominus_H\int_0^t -f(s,x(s),u(s))ds$ for all $t\in[0,T]$.
\end{proof}

Next we consider the following interval-valued optimal control problem (IOCP):
$$
\textrm{Minimize}\; F(u):=\int_0^T L(t,x(t),u(t))dt,\;\textrm{subject to}\; u\in U_{ad},
$$
where $L:[0,T]\times \mathcal{I}\times \mathbb{R}^n\to \mathcal{I}$ with $L(t,x(t),u(t))=[\underline{L}(t,x(t),u(t)),\overline{L}(t,x(t),u(t))]$ and $x$ is a solution for equation (\ref{eq4.14}) corresponding to $u\in U_{ad}$ such that $L(\cdot,x(\cdot),u(\cdot))$ is measurable and integrably bounded on $[0,T]$.

Now we introduce an approximate minimal solution of (IOCP).
\begin{definition}\label{de4.8}
Given $\varepsilon>0$, a point $u_0\in U_{ad}$ is said to be an $\varepsilon$-minimal solution of (IOCP) if, for any $u\in U_{ad}$, $F(u)+[\varepsilon\|u-u_0\|_{U_{ad}},\varepsilon\|u-u_0\|_{U_{ad}}]\not\prec F(u_0)$.
\end{definition}
\begin{remark}\label{re4.8}
We would like to mention that a minimal solution in the sense of Definition \ref{de2.7} implies an $\varepsilon$-minimal solution.
\end{remark}

In order to obtain approximate minimal solutions of (IOCP), we also need the following lemmas.

\begin{lemma}\label{le4.7}(\cite{Bellman})
Let $g,h:[0,+\infty)\to [0,+\infty)$ be continuously real-valued functions and $c$ be a constant with $c\geq0$. If for any $t\in[0,+\infty)$,
$g(t)\leq c+(L)\int_0^tg(s)h(s)ds$, then $g(t)\leq ce^{(L)\int_0^t h(s)ds}$.
\end{lemma}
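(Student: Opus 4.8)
The plan is to reduce the integral inequality to a differential inequality and then integrate the latter by means of an integrating factor, which is the classical route to Gr\"onwall-type estimates. First I would introduce the auxiliary function
$$
G(t):=c+(L)\int_0^t g(s)h(s)\,ds,\qquad t\in[0,+\infty).
$$
Since $g$ and $h$ are continuous, the integrand $g(\cdot)h(\cdot)$ is continuous, so the Lebesgue integral coincides with the Riemann integral and $G$ is continuously differentiable with $G'(t)=g(t)h(t)$. The hypothesis reads precisely $g(t)\le G(t)$ for all $t$, and because $h(t)\ge 0$ this yields the differential inequality $G'(t)=g(t)h(t)\le G(t)h(t)$.

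Next I would eliminate $G$ from the right-hand side using the integrating factor $e^{-H(t)}$, where $H(t):=(L)\int_0^t h(s)\,ds$. Noting $H'(t)=h(t)$, I would compute
$$
\frac{d}{dt}\bigl(G(t)e^{-H(t)}\bigr)=\bigl(G'(t)-G(t)h(t)\bigr)e^{-H(t)}\le 0,
$$
so that $t\mapsto G(t)e^{-H(t)}$ is nonincreasing on $[0,+\infty)$. Evaluating at $t=0$ gives $G(t)e^{-H(t)}\le G(0)e^{-H(0)}=c$ for every $t$, and rearranging yields $G(t)\le c\,e^{H(t)}$. Combining this with $g(t)\le G(t)$ produces the desired bound $g(t)\le c\,e^{(L)\int_0^t h(s)\,ds}$.

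I expect the argument to be essentially routine, the only points requiring a little care being the regularity of $G$ and the degenerate case $c=0$. The first is settled by the assumed continuity of $g$ and $h$, which legitimizes applying the fundamental theorem of calculus to the Lebesgue integral. For the second, the same chain of inequalities still applies: $G(t)e^{-H(t)}\le 0$ together with $G(t)\ge 0$ forces $G(t)=0$, whence $g(t)\le 0$; since $g\ge 0$ this gives $g\equiv 0$, in agreement with $c\,e^{H(t)}=0$. Thus no separate treatment is needed, and the main (mild) obstacle is simply to record these regularity and boundary observations cleanly rather than to overcome any genuine analytic difficulty.
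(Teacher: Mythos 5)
Your argument is correct: introducing $G(t)=c+\int_0^t g(s)h(s)\,ds$, passing to the differential inequality $G'\le Gh$ via $g\le G$ and $h\ge 0$, and integrating with the factor $e^{-H(t)}$ is the standard and complete proof of the Gronwall--Bellman inequality, and your remarks on the regularity of $G$ and the case $c=0$ are accurate (indeed the latter needs no separate treatment). Note that the paper itself supplies no proof of this lemma --- it is quoted directly from Bellman's 1943 paper --- so there is nothing to compare against; your proposal simply fills in the classical argument correctly.
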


\begin{lemma}\label{le4.8}
Let $x,y\in C([0,T],\mathcal{I})$ and $\phi:[0,T]\to[0,+\infty)$ be a real-valued function defined by $\phi(t):=d_H(x(t),y(t))$ for all $t\in[0,T]$. Then $\phi$ is continuous on $[0,T]$.
\end{lemma}
\begin{proof}
Let $\{t_n\}$ be a sequence in $[0,T]$ converging to $t_0\in[0,T]$. Because $x$ and $y$ are Hausdorff continuous on $[0,T]$, one has
$d_H(x(t_n),x(t_0))\rightarrow0$ and $d_H(y(t_n),y(t_0))\rightarrow0$.
Then we have
\begin{align}
|\phi(t_n)-\phi(t_0)|&=|d_H(x(t_n),y(t_n))-d_H(x(t_0),y(t_0))|\nonumber\\
&\leq |d_H(x(t_n),x(t_0))+d_H(x(t_0),y(t_n))-d_H(x(t_0),y(t_0))|\nonumber\\
&\leq d_H(x(t_n),x(t_0))+d_H(y(t_n),y(t_0))\rightarrow0.\nonumber
\end{align}
This shows that $\phi$ is continuous.
\end{proof}
\begin{remark}\label{re4.9}
Note that $\phi$ is uniformly continuous on $[0,T]$ since $[0,T]$ is a compact set.
\end{remark}

We end this subsection by giving the following theorems which ensure the existence of approximate minimal solutions of (ICOP) under the generalized differentiability.
To this end, we need the following assumptions.
\begin{itemize}
\item[(A3)] $L$ is $\preccurlyeq$-lower semicontinuous on $[0,T]\times \mathcal{I}\times \mathbb{R}^n$;
\item[(A4)] for any $(t,\zeta,\eta)\in [0,T]\times \mathcal{I}\times \mathbb{R}^n$, $[0,0]\preccurlyeq L(t,\zeta,\eta)$.
\end{itemize}
\begin{remark}\label{re4.10}
By Remark \ref{re4.6}, we can see that $L(\cdot,x(\cdot),u(\cdot))$ is Aumann integrable on $[0,T]$, where  $x$ is a solution for equation (\ref{eq4.14}) corresponding to $u\in U_{ad}$. Moreover, $\underline{L}(\cdot,x(\cdot),u(\cdot))$ and $\overline{L}(\cdot,x(\cdot),u(\cdot))$ are Lebesgue integrable on $[0,T]$ and
$$\int_0^T L(t,x(t),u(t))dt=\left[(L)\int_0^T\underline{L}(t,x(t),u(t))dt,(L)\int_0^T\overline{L}(t,x(t),u(t))dt\right].$$
\end{remark}

\begin{theorem}\label{th4.8}
Let the (i)-differentiability be taken on equation (\ref{eq4.14}). Assume that (A1)-(A4) are satisfied.  Then for every given $\varepsilon>0$,
there exists an $\varepsilon$-minimal solution of (IOCP).
\end{theorem}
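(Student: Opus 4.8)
The plan is to apply the interval-valued Ekeland principle in the form of Corollary \ref{co3.1} to the cost functional $F:U_{ad}\to\mathcal I$, $F(u)=\int_0^T L(t,x(t),u(t))\,dt$, where $x=x(u)$ is the unique (i)-differentiable solution of (\ref{eq4.14}) furnished by Remark \ref{re4.7}. Since $U_{ad}$ is a complete metric space under $\|\cdot\|_{U_{ad}}$, once we verify that $F$ is $\preccurlyeq$-lower semicontinuous and $\preccurlyeq$-lower bounded, Corollary \ref{co3.1} produces a point $u_\varepsilon\in U_{ad}$ satisfying $F(u)+[\varepsilon\|u-u_\varepsilon\|_{U_{ad}},\varepsilon\|u-u_\varepsilon\|_{U_{ad}}]\not\preccurlyeq F(u_\varepsilon)$ for all $u\neq u_\varepsilon$; by Remark \ref{re3.2} this upgrades to $\not\prec$ and remains valid at $u=u_\varepsilon$. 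This is precisely Definition \ref{de4.8}, so $u_\varepsilon$ is the desired $\varepsilon$-minimal solution, and the whole theorem reduces to checking the two hypotheses of Corollary \ref{co3.1}.

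Lower boundedness is immediate: by (A4) we have $\underline L\ge0$ and $\overline L\ge0$, hence by Remark \ref{re4.10}, $\underline F(u)=(L)\int_0^T\underline L\ge0$ and $\overline F(u)=(L)\int_0^T\overline L\ge0$, so $[0,0]\preccurlyeq F(u)$ for every $u$ and $F$ is $\preccurlyeq$-lower bounded. The substantive work is the lower semicontinuity of $F$, whose main ingredient is the continuous dependence of the state on the control. Given $u_n\to u$ in $U_{ad}$ with corresponding solutions $x_n,x$, I would use the integral representation $x(t)=x_0+\int_0^t f(s,x(s),u(s))\,ds$ together with the translation invariance of $d_H$, Lemma \ref{le4.4}(iv), and the Lipschitz bound (A2) to obtain
\[
d_H(x_n(t),x(t))\le k_2T\|u_n-u\|_{U_{ad}}+k_1(L)\int_0^t d_H(x_n(s),x(s))\,ds .
\]
Since $t\mapsto d_H(x_n(t),x(t))$ is continuous by Lemma \ref{le4.8}, Gronwall's inequality (Lemma \ref{le4.7}) yields $D(x_n,x)\le k_2Te^{k_1T}\|u_n-u\|_{U_{ad}}\to0$, so $x_n\to x$ in $C([0,T],\mathcal I)$ and in particular $x_n(t)\to x(t)$ and $u_n(t)\to u(t)$ for each $t$.

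With this in hand, lower semicontinuity follows from a Fatou-type argument. By (A3) and Proposition \ref{p2.3}, $\underline L$ and $\overline L$ are lower semicontinuous, so for every $t$ one has $\underline L(t,x(t),u(t))\le\liminf_n\underline L(t,x_n(t),u_n(t))$, and likewise for $\overline L$. Since these integrands are nonnegative by (A4), Fatou's lemma gives $\underline F(u)\le\liminf_n\underline F(u_n)$ and $\overline F(u)\le\liminf_n\overline F(u_n)$; hence $\underline F,\overline F$ are lower semicontinuous on $U_{ad}$ and, by Proposition \ref{p2.3}, $F$ is $\preccurlyeq$-lower semicontinuous. Both hypotheses of Corollary \ref{co3.1} now hold and the theorem follows as described. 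The main obstacle is the first step of the semicontinuity argument, namely the continuous dependence estimate, because only a Lipschitz assumption on $f$ (and not on $L$) is available, so one must propagate regularity from the dynamics to the cost through the Gronwall bound; the remaining Fatou passage is comparatively routine and crucially relies on the sign condition (A4) to license the use of Fatou's lemma.
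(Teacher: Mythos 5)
Your proposal is correct and follows essentially the same route as the paper: lower boundedness from (A4), continuous dependence of the state on the control via the integral representation, (A2), Lemma \ref{le4.4}(iv), Lemma \ref{le4.8} and Gronwall's inequality (Lemma \ref{le4.7}), then Fatou's lemma with (A3) and Proposition \ref{p2.3} to get $\preccurlyeq$-lower semicontinuity of $F$, and finally Corollary \ref{co3.1} together with Remark \ref{re3.2}. The only cosmetic difference is that you phrase the Gronwall bound directly in terms of $\|u_n-u\|_{U_{ad}}$ rather than an auxiliary $\delta$, which is if anything slightly cleaner.
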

\begin{proof}
By assumption (A4), we have $[0,0]\preccurlyeq F(u)$ for all $u\in U_{ad}$ and so $F$ is $\preccurlyeq$-lower bounded.
We claim that $F$ is $\preccurlyeq$-lower semicontinuous on $U_{ad}$. In fact, let $\{u_n\}$ be a sequence in $U_{ad}$ converging to $ \overline{u}\in U_{ad}$. Then there exist solutions $\{x_n\}$ and $\overline{x}$ of equation (\ref{eq4.14}) associated with  $\{u_n\}$ and $\overline{u}$, respectively.
Thus,
\begin{equation}\label{eq4.16}
x_n(t)=x_0+\int_0^tf(s,x_n(s),u_n(s))ds ,\quad \overline{x}=x_0+\int_0^tf(s,\overline{x}(s),\overline{u}(s))ds,\quad \forall t\in[0,T].
\end{equation}
Now we want to show that $D(x_n, \overline{x})\rightarrow0$.
Because $u_n$ converges to $\overline{u}$, for any $\delta>0$, there exists $N_1>0$ such that, for each $n>N_1$, $\|u_n(t)-\overline{u}(t)\|_{\mathbb{R}^n}<\delta$ for all $t\in[0,T]$. Moreover, by assumption (A2) and Lemma \ref{le4.4}, it follows from (\ref{eq4.16}) that, for any $n>N_1$ and $t\in[0,T]$,
\begin{align}
d_H(x_n(t),\overline{x}(t))
&\leq (L)\int_{0}^{t}d_H(f(s,x_n(s),u_n(s)),f(s,\overline{x}(s),\overline{u}(s)))ds\nonumber\\
&\leq (L)\int_{0}^{t}k_1 d_H(x_n(s),\overline{x}(s))+k_2\|u_n(s)-\overline{u}(s)\|_{\mathbb{R}^n}ds\nonumber\\
&\leq (L)\int_{0}^{t}k_1 d_H(x_n(s),\overline{x}(s))ds+(L)\int_{0}^{t}\delta k_2 ds\nonumber\\
&\leq \delta k_2T+(L)\int_{0}^{t}k_1 d_H(x_n(s),\overline{x}(s))ds.\nonumber
\end{align}
According to Lemmas \ref{le4.7} and \ref{le4.8}, we know that, for any $n>N_1$ and $t\in[0,T]$,
\begin{equation}\label{eq4.17}
d_H(x_n(t),\overline{x}(t))\leq\delta k_2T e^{(L)\int_0^t k_1ds}\leq\delta k_2Te^{k_1T}.
\end{equation}
Since $\delta>0$ is arbitrary, inequality (\ref{eq4.17}) implies $\sup_{t\in[0,T]}d_H(x_n(t),\overline{x}(t))\rightarrow0$. In other words, $D(x_n,\overline{x})\rightarrow0$.
By setting $F(u)=[\underline{F}(u),\overline{F}(u)]$, it follows from Remark \ref{re4.10} that
$$
F(u)=[\underline{F}(u),\overline{F}(u)]=\left[(L)\int_0^T\underline{L}(t,x(t),u(t))dt,(L)\int_0^T\overline{L}(t,x(t),u(t))dt\right].
$$
Thanks to the facts that $\underline{L}(\cdot,\cdot,\cdot)\geq0$ and $\overline{L}(\cdot,\cdot,\cdot)\geq0$, applying Fatou's lemma, we can see that
$$
(L)\int_0^T\liminf_{n\rightarrow+\infty}\underline{L}(t,x_n(t),u_n(t))dt\leq\liminf_{n\rightarrow+\infty}~(L)\int_0^T\underline{L}(t,x_n(t),u_n(t))dt
$$
and
$$
(L)\int_0^T\liminf_{n\rightarrow+\infty}\overline{L}(t,x_n(t),u_n(t))dt\leq\liminf_{n\rightarrow+\infty}~(L)\int_0^T\overline{L}(t,x_n(t),u_n(t))dt.
$$
Because $L$ is $\preccurlyeq$-lower semicontinuous, it follows from Proposition \ref{p2.3} that, for any $t\in[0,T]$,
$$ \underline{L}(t,\overline{x}(t),\overline{u}(t))\leq\liminf_{n\rightarrow+\infty}\underline{L}(t,x_n(t),u_n(t)),\quad
\overline{L}(t,\overline{x}(t),\overline{u}(t))\leq\liminf_{n\rightarrow+\infty}\overline{L}(t,x_n(t),u_n(t)).$$
Consequently, we have
$$
(L)\int_0^T\underline{L}(t,\overline{x}(t),\overline{u}(t))dt\leq\liminf_{n\rightarrow+\infty}~(L)\int_0^T\underline{L}(t,x_n(t),u_n(t))dt
$$
and
$$
(L)\int_0^T\overline{L}(t,\overline{x}(t),\overline{u}(t))dt\leq\liminf_{n\rightarrow+\infty}~(L)\int_0^T\overline{L}(t,x_n(t),u_n(t))dt.
$$
It follows that $\underline{F}(u)=\int_0^T\underline{L}(t,x(t),u(t))dt$ and $\overline{F}(u)=\int_0^T\overline{L}(t,x(t),u(t))dt$ are lower semicontinuous for all $u\in U_{ad}$. Thus, $F$ is $\preccurlyeq$-lower semicontinuous on $U_{ad}$ due to Proposition \ref{p2.3}. Let $\varepsilon>0$ be given. The hypotheses of Corollary \ref{co3.1} are then satisfied, so there exists a control $u_\varepsilon\in U_{ad}$ such that, for any $u\in U_{ad}\backslash\{u_\varepsilon\}$, $F(u)+[\varepsilon\|u-u_\varepsilon\|_{U_{ad}},\varepsilon\|u-u_\varepsilon\|_{U_{ad}}]\not\preccurlyeq F(u_\varepsilon)$. By Remark \ref{re3.2}, we can see that, for any $u\in U_{ad}$, $F(u)+[\varepsilon\|u-u_\varepsilon\|_{U_{ad}},\varepsilon\|u-u_\varepsilon\|_{U_{ad}}]\not\prec F(u_\varepsilon)$ and hence
$u_\varepsilon$ is an $\varepsilon$-minimal solution of (IOCP).
\end{proof}

By Theorem \ref{th4.7} and Lemmas \ref{le4.3}-\ref{le4.8}, using the similar argument as in Theorem \ref{th4.8}, we can show the following theorem.
\begin{theorem}\label{th4.9}
Let the (ii)-differentiability be taken on equation (\ref{eq4.14}). Assume that (A1)-(A4) are satisfied. For each $u\in U_{ad}$ and $t\in [0,T]$, if the Hukuhara difference $\chi\ominus_H\int_0^t -f(s,y(s),u(s))ds$ exists for all $y\in C([0,T],\mathcal{I})$ and $\chi\in \mathcal{I}$, then there exists an $\varepsilon$-minimal solution of (IOCP) for every given $\varepsilon>0$.
\end{theorem}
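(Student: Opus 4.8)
The plan is to mirror the proof of Theorem \ref{th4.8} almost verbatim, since the type of differentiability enters only through the integral representation of the state and the resulting continuous-dependence estimate. First I would dispose of the easy half: by (A4) we have $[0,0]\preccurlyeq L(t,\zeta,\eta)$ pointwise, hence $[0,0]\preccurlyeq F(u)$ for every $u\in U_{ad}$, so $F$ is $\preccurlyeq$-lower bounded. It then remains to prove that $F$ is $\preccurlyeq$-lower semicontinuous on $U_{ad}$, after which Corollary \ref{co3.1} together with Remark \ref{re3.2} yields a control $u_\varepsilon\in U_{ad}$ with $F(u)+[\varepsilon\|u-u_\varepsilon\|_{U_{ad}},\varepsilon\|u-u_\varepsilon\|_{U_{ad}}]\not\prec F(u_\varepsilon)$ for all $u\in U_{ad}$, i.e. an $\varepsilon$-minimal solution of (IOCP).

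The core is the lower semicontinuity of $F$, and the only genuinely new work lies in the continuous dependence of the state on the control. I would take $\{u_n\}$ in $U_{ad}$ with $u_n\to\overline{u}$; by the standing Hukuhara-difference hypothesis and Theorem \ref{th4.7}, each $u_n$ and $\overline{u}$ determine unique solutions
$$x_n(t)=x_0\ominus_H\int_0^t -f(s,x_n(s),u_n(s))ds,\quad \overline{x}(t)=x_0\ominus_H\int_0^t -f(s,\overline{x}(s),\overline{u}(s))ds.$$
The decisive step is to bound $d_H(x_n(t),\overline{x}(t))$. Because both states are Hukuhara differences sharing the common minuend $x_0$, Lemma \ref{le4.5} gives
$$d_H(x_n(t),\overline{x}(t))\leq d_H\left(\int_0^t -f(s,x_n(s),u_n(s))ds,\int_0^t -f(s,\overline{x}(s),\overline{u}(s))ds\right);$$
using $d_H(-A,-B)=d_H(A,B)$ and Lemma \ref{le4.4}(iv) this is at most $(L)\int_0^t d_H(f(s,x_n(s),u_n(s)),f(s,\overline{x}(s),\overline{u}(s)))ds$, and invoking (A2) reproduces exactly the integral inequality
$$d_H(x_n(t),\overline{x}(t))\leq \delta k_2 T+(L)\int_0^t k_1 d_H(x_n(s),\overline{x}(s))ds$$
from the proof of Theorem \ref{th4.8}. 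Lemma \ref{le4.7} (Grönwall) and the continuity supplied by Lemma \ref{le4.8} then force $D(x_n,\overline{x})\to0$.

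With continuous dependence secured, the rest is identical to Theorem \ref{th4.8}: writing $F(u)=[\underline{F}(u),\overline{F}(u)]$ via Remark \ref{re4.10}, the nonnegativity in (A4) licenses Fatou's lemma, while the $\preccurlyeq$-lower semicontinuity of $L$ in (A3) together with Proposition \ref{p2.3} gives $\underline{L}(t,\overline{x}(t),\overline{u}(t))\leq\liminf_{n\to+\infty}\underline{L}(t,x_n(t),u_n(t))$ and the analogous bound for $\overline{L}$; integrating shows $\underline{F}$ and $\overline{F}$ are lower semicontinuous, whence $F$ is $\preccurlyeq$-lower semicontinuous by Proposition \ref{p2.3}. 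I expect the main obstacle to be precisely the continuous-dependence estimate in the Hukuhara-difference setting: one must check that Lemma \ref{le4.5} legitimately collapses the distance between the two states to the distance between their integral parts (so that the $(\mathrm{i})$-differentiability computation can be rerun unchanged), and that the standing existence hypothesis keeps all the Hukuhara differences appearing along the way well defined.
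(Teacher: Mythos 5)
Your proposal is correct and follows essentially the same route as the paper, which itself only sketches this proof as ``by Theorem \ref{th4.7} and Lemmas \ref{le4.3}--\ref{le4.8}, using the similar argument as in Theorem \ref{th4.8}.'' In particular, you correctly identify the one genuinely new ingredient --- replacing the additive cancellation in the continuous-dependence estimate by Lemma \ref{le4.5} applied with the common minuend $x_0$, together with $d_H(-A,-B)=d_H(A,B)$ --- after which the Gr\"onwall and Fatou steps carry over unchanged.
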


\section{Conclusions}
\label{sec6}
\setcounter{equation}{0}
The present paper focuses on the study of Ekeland's variational principle for interval-valued functions with applications.
The main contributions of this paper are summarized as follows: (i) some new interval-valued versions of Ekeland's variational principle are established under mild conditions; (ii) some new fixed point theorems are obtained for set-valued mappings and interval-valued functions;  (iii) some existence theorems concerned with minimal solutions are shown for interval-valued optimization problems involving the Palais-Smale condition; (iv) a new version of Mountain Pass Theorem is presented for an interval-valued function; (v) an existence theorem of $\varepsilon$-Nash equilibria is given for a noncooperative interval-valued game; (vi) some existence theorems concerned with $\varepsilon$-minimal solutions are deduced for interval-valued optimal control problems driven by interval-valued differential equations under the generalized differentiability.

It is well known that Ekeland's variational principle is closely related to partial differential equations and critical points theory. Naturally,
future research will adopt the presented results in the paper to study interval-valued partial differential equations and critical points theory.


\end{document}